\newenvironment{proof}{\paragraph{Proof:}}{\hfill$\square$}
\tikzset{
  shadowed/.style={preaction={
      transform canvas={shift={(2pt,-1pt)}},draw opacity=.2,#1,preaction={
        transform canvas={shift={(3pt,-1.5pt)}},draw
        opacity=.1,#1,preaction={
          transform canvas={shift={(4pt,-2pt)}},draw
          opacity=.05,#1,
  }}}},
}
\newif\if@restonecol
\newtheorem{theorem}{Theorem}	
\newtheorem{definition}{Definition}
\newtheorem{lemma}{Lemma}
\newtheorem{remark}{Remark}
\newtheorem{example}{Example}
\newtheorem{corollary}{Corollary}
\def\R{\mathbb{R}}
\def\F{\mathcal{F}}
\newcommand{\G}{\mathcal{G}}
\tikzset{
  ashadow/.style={opacity=.25, shadow xshift=0.07, shadow yshift=-0.07},
}
\newcommand{\remove}[1]{}
\newcommand{\blue}[1]{\textcolor{black}{#1}}
\newcommand{\black}[1]{\textcolor{black}{#1}}
\def\fskip#1{}
\def\P{\mathbb P}
\def\pstar{\mathcal P^*}
\def\R{\mathbb{R}}
\def\E{\mathbb E}
\def\B{\mathcal B}
\def\F{\mathcal F}
\def\G{\mathcal G}
\def\allzero{\mathbf{0}}
\def\allone{\mathbf{1}}
\DeclarePairedDelimiterX{\infdivx}[2]{(}{)}{%
  #1\;\delimsize\|\;#2%
}
\newcommand{\infdiv}{D\infdivx}
\def\Pr{\text{Pr}}
\def\N{\mathbb{N}}
\begin{document}

\title{Non-Bayesian Social Learning on Random Digraphs with Aperiodically Varying Network Connectivity}

 \author{Rohit Parasnis\thanks{rparasni@ucsd.edu}}
 \author{Massimo Franceschetti\thanks{mfranceschetti@eng.ucsd.edu}}
 \author{Behrouz Touri\thanks{btouri@eng.ucsd.edu}} 
 \affil{Department of Electrical and Computer Engineering, University of California San Diego}
\date{}

\maketitle

\begin{abstract}
We study non-Bayesian social learning on random directed graphs and show that under mild connectivity assumptions, all the agents almost surely learn the true state of the world asymptotically in time if the sequence of the associated weighted adjacency matrices belongs to Class $\pstar$ (a broad class of stochastic chains that subsumes uniformly strongly connected chains).  We show that uniform strong connectivity, while being unnecessary for asymptotic learning, ensures that all the agents' beliefs converge to a consensus almost surely, even when the true state is not identifiable. \blue{We then provide a few corollaries of our main results, 
some of which apply to variants of the original update rule such as inertial non-Bayesian learning and learning via diffusion and adaptation. Others include extensions of known results on social learning.} 
We also show   that, if the network of influences is balanced in a certain sense, then asymptotic learning occurs almost surely even in the absence of uniform strong connectivity. \end{abstract}

\section{INTRODUCTION}

The advent of social media and internet-based sources of information such as news websites and online databases over the last few decades has significantly influenced the way people learn about the world around them. For instance, while learning about political candidates or the latest electronic gadgets, individuals tend to gather relevant information from internet-based information sources as well as from the social groups they belong to.

To study the impact of social networks and external sources of information on the evolution of individuals' beliefs, several models of social dynamics have been proposed during the last few decades (see~\cite{proskurnikov2017tutorial} and~\cite{proskurnikov2018tutorial} for a detailed survey). Notably, the manner in which the agents update their beliefs ranges from being naive as in~\cite{degroot1974reaching}, wherein an agent's belief keeps shifting to the arithmetic mean of her neighbors' beliefs, to being fully rational (or Bayesian) as in \blue{the works~\cite{banerjee1992simple} and~\cite{acemoglu2011bayesian}. For a survey of results on Bayesian learning, see~\cite{acemoglu2011opinion}.}

However, as argued in~\cite{jadbabaie2012non} and in several subsequent works, it is unlikely that real-world social networks consist of fully rational agents because not only are Bayesian update rules computationally burdensome, but they also require every agent to understand the structure of the social network they belong to, and to know every other agent's history of private observations. Therefore, the seminal paper~\cite{jadbabaie2012non} proposed a non-Bayesian model of social learning to model agents with limited rationality (agents that intend to be fully rational but end up being only partially rational because they  have neither the time nor the energy to analyze their neighbors' beliefs critically). This model assumes that the world (or the agents' object of interest) is described by a set of possible states, of which only one is the true state. With the objective of identifying the true state, each agent individually performs measurements on the state of the world and learns her neighbors' most recent beliefs in every state. At every new time step, the agent updates her beliefs by incorporating her own latest observations in a Bayesian manner and others' beliefs in a naive manner. With this update rule, all the agents almost surely learn the true state asymptotically in time, without having to learn the network structure or others' private observations. 

Notably, some of the non-Bayesian learning models inspired by the original model proposed in~\cite{jadbabaie2012non} have been shown to yield efficient algorithms for distributed learning (for examples see\blue{~\cite{nedic2017fast,nedic2015nonasymptotic,paritosh2019hypothesis,nedic2017distributed,lalitha2014social,zhao2012learning,mitra2020new,matta2020interplay,su2016asynchronous}}, and see~\cite{nedic2016tutorial} for a tutorial). Furthermore, the model of~\cite{jadbabaie2012non} has motivated research on decentralized estimation~\cite{tsiligkaridis2015decentralized}, cooperative device-to-device communications~\cite{meng2017cooperative}, crowdsensing in mobile social networks~\cite{meng2017social},  manipulation in social networks~\cite{mostagir2019society}, impact of \blue{social networking platforms,} social media and fake news on social learning\blue{~\cite{anunrojwong2020social, azzimonti2018social}}, and learning in the presence of malicious agents and model uncertainty~\cite{hare2019malicious}. 

  It is also worth noting that some of the models inspired by~\cite{jadbabaie2012non} have been studied in fairly general settings such as the scenario of infinitely many hypotheses~\cite{nedic2017distributed}, learning with asynchrony and crash failures~\cite{su2016asynchronous}, and learning in the presence of malicious agents and model uncertainty~\cite{hare2019malicious}. 
  
  However, most of the existing non-Bayesian learning models make two crucial assumptions. First, they assume the network topology to be deterministic rather than stochastic. Second, they  describe the network either by a static influence graph (a time-invariant graph that indicates whether or not two agents influence each other), or by a sequence of influence graphs that are \textit{uniformly strongly connected}, i.e., strongly connected over time intervals that occur periodically. 
  
  By contrast, real-world networks are not likely to satisfy either assumption. The first assumption is often violated because real-world network structures are often subjected to a variety of random phenomena such as communication link failures. As for the second assumption, the influence graphs underlying real-world social networks may not always exhibit strong connectivity properties, and even if they do, they may not do so periodically. This is because there might be arbitrarily long deadlocks or phases of distrust between the agents during which most of them value their own measurements much more than others' beliefs. This is possible even when the agents know each other's beliefs well.
      
  This dichotomy motivates us to extend the model of~\cite{jadbabaie2012non} to random directed graphs satisfying weaker connectivity criteria. To do so, we identify certain sets of agents called \textit{observationally self-sufficient} sets. The collection of measurements obtained by any of these sets is at least as useful as that obtained by any other set of agents. We then introduce the concept of $\gamma$-epochs which, essentially, are periods of time over which the underlying social network is adequately well-connected. We \blue{then derive our main result:} under the same assumptions as made in~\cite{jadbabaie2012non} on the agents' prior beliefs and observation structures, if the sequence of the weighted adjacency matrices associated with the network belongs to a broad class of random stochastic chains called Class $\pstar$, and if these matrices are independently distributed, then our relaxed connectivity assumption ensures that all the agents will almost surely learn the truth asymptotically in time. 
  
  \blue{The contributions of this paper are as follows:}
  \begin{enumerate}
      \item \blue{\textbf{\textit{Criteria for Learning on Random Digraphs:}}}
      \blue{Our work extends the earlier studies on non-Bayesian learning to the scenario of learning on random digraphs, and} as we will show, our assumption of recurring $\gamma$-epochs is weaker than the standard assumption of uniform strong connectivity. \blue{Therefore, our main result identifies a set of sufficient conditions for  almost-sure asymptotic learning that are weaker than those derived in prior works.  Moreover, our main result (Theorem~\ref{thm:main})  does not assume almost-sure fulfilment of our connectivity criteria (see Assumption~\ref{item:pstar} and Remark~\ref{rem:random_not_deterministic}).}  
      \blue{Consequently, }our main result significantly generalizes some of the known results \blue{on social learning.
      \item \blue{\textit{\textbf{Implications for Distributed Learning:}}} Since the learning rule~\eqref{eq:main} is an exponentially fast algorithm for distributed learning~\cite{jadbabaie2013information, nedic2016tutorial}, our main result significantly extends the practicality of the results of~\cite{jadbabaie2012non,liu2012social,liu2014social, zhao2012learning}.}
      \item \blue{\textit{\textbf{A Sufficient Condition for Consensus:}}} \blue{Theorem~\ref{thm:usc} shows how} uniform strong connectivity ensures that all the agents' beliefs converge to a consensus  almost surely even when the true state is not identifiable.
      \item \blue{ \textit{\textbf{Results on Related Learning Scenarios:}} Section~\ref{sec:implications} provides sufficient conditions for almost-sure asymptotic learning in  
      certain variants of the original model such as learning via diffusion-adaptation and inertial non-Bayesian learning.}
      \item \blue{\textbf{\textit{Methodological Contribution:}} The proofs of Theorems~\ref{thm:main} and~\ref{thm:usc} illustrate the effectiveness of the less-known theoretical techniques of Class $\pstar$ and absolute probability sequences. Although these tools are typically used to analyze linear dynamics, our work entails a novel application of the same to a non-linear system. Specifically, the proof of Theorem~\ref{thm:main} is an example of how these methods can be used to analyse dynamics that approximate linear systems arbitrarily well in the limit as time goes to infinity.} 
  \end{enumerate}
     

 Out of the available non-Bayesian learning models, we choose the one proposed in~\cite{jadbabaie2012non} for our analysis because its update rule \blue{is an analog of}  DeGroot's learning rule~\cite{degroot1974reaching} in \blue{a learning environment that enables the agents to acquire external information in the form of private signals}~\cite{jadbabaie2013information}, and experiments have repeatedly shown that variants of DeGroot's model predict real-world belief evolution better than models that are  founded solely on Bayesian rationality~\cite{grimm2020experiments,chandrasekhar2015testing,corazzini2012influential}. Moreover, DeGroot's learning rule is the only rule that satisfies the  psychological assumptions of imperfect recall, label-neutrality, monotonicity and separability~\cite{molavi2017foundations}.

\textit{Related works:}  
\blue{We first describe the main differences between this paper and our prior work~\cite{parasnis2020uniform}:
\begin{enumerate} 
    \item The main result (Theorem 1) of \cite{parasnis2020uniform} applies only to deterministic time-varying networks, whereas the main result (Theorem~\ref{thm:main}) of this paper applies to random time-varying networks. Hence, Theorem~\ref{thm:main} of this paper is more general than the main result of~\cite{parasnis2020uniform}. As we will show in Remark~\ref{rem:random_not_deterministic}, the results of this paper apply to certain random graph sequences that almost surely fall outside the class of deterministic graph sequences considered in~\cite{parasnis2020uniform}.
    \item In addition to the corollaries reported in~\cite{parasnis2020uniform}, this paper provides three  
    corollaries of our main results that apply to random networks. These corollaries are central to the sections 
    on learning amid link failures, inertial non-Bayesian learning, and learning via diffusion and adaptation (Section \ref{subsec:link_failures} -- Section \ref{subsec:diffusion_adaptation}).
\end{enumerate}
}

As for other related works,~\cite{vyavahare2019distributed} and~\cite{mitra2019communication} make novel connectivity assumptions, but unlike our work, neither of them allows for arbitrarily long periods of poor network connectivity.  The same can be said about~\cite{azzimonti2018social} and~\cite{shahrampour2015finite}, even though they consider random networks and impose connectivity criteria only in the expectation sense. Finally, we note that~\cite{mitra2020event} and~\cite{wu2020resilient} come close to our work because the former proposes an algorithm that allows for aperiodically varying network connectivity while the latter makes no connectivity assumptions. 
However, the sensor network algorithms proposed in ~\cite{mitra2020event} and~\cite{wu2020resilient} require each agent to have an \textit{actual} belief and a \textit{local} belief, besides using minimum-belief rules to update the actual beliefs. By contrast, the learning rule we analyze is more likely to mimic social networks because it is simpler and  closer to the empirically supported DeGroot learning rule. Moreover, unlike our analysis, neither~\cite{mitra2020event} nor~\cite{wu2020resilient} accounts for randomness in the network structure.
  
  We begin by defining the model in Section~\ref{sec:formulation}. In Section~\ref{sec:p_star}, we review Class $\pstar$, a special but broad class of matrix sequences that forms an important part of our assumptions. Next, Section~\ref{sec:main_result} establishes our main result. We then discuss the implications of this result in Section~\ref{sec:implications}. We conclude with a brief summary and future directions in Section~\ref{sec:conclusion}.
  
  \textbf{Notation:} We denote the set of real numbers by $\R$, the set of positive integers by $\N$, and define $\N_0:=\N\cup\{0\}$. For any $n\in\N$, we define $[n]:=\{1,2,\ldots, n\}$. 
  
  We denote the vector space of $n$-dimensional real-valued column vectors by $\R^n$. We use the superscript notation $^T$ to denote the transpose of a vector or a matrix. All the matrix and vector inequalities are entry-wise inequalities. Likewise, if $v\in \R^n$, then $|v|:=[|v_1|\,\,|v_2|\,\,\ldots\,\,|v_n|]^T$, and if $v>0$ additionally, then $\log(v):=[\log(v_1)\,\,\log(v_2)\,\,\ldots\,\,\log(v_n)]^T$. We use $I$ to denote the identity matrix (of the known dimension) and $\allone$ to denote the column vector (of the known dimension) that has all entries equal to 1. Similarly, \blue{$\mathbf 0$} denotes the all-zeroes vector of the known dimension. 
  
  We say that a vector $v\in\R^n$ is \textit{stochastic} if $v\geq 0$ and $v^T\allone = 1$, and a matrix $A$ is \text{stochastic} if $A$ is non-negative and if each row of $A$ sums to 1, i.e., if $A\geq 0$ and $A\allone=\allone$. A stochastic matrix $A$ is \textit{doubly stochastic} if each column of $A$ sums to 1, i.e., if $A\geq 0$ and $A^T\allone=A\allone=\allone$. A sequence of stochastic matrices is called a \textit{stochastic chain}. If $\{A(t)\}_{t=0}^\infty$ is a stochastic chain, then for any two times $t_1,t_2\in\N_0$ such that $t_1\leq t_2$, we define $A(t_2:t_1) := A(t_2-1) A(t_2-2)\cdots A(t_1)$,
  and let $A(t_1:t_1):=I$. If $\{A(t)\}_{t=0}^\infty$ is a random stochastic chain (a sequence of random stochastic matrices), then it is called an \textit{independent} chain if the matrices $\{A(t)\}_{t=0}^\infty$ are $P$-independent with respect to a given probability measure $P$. 
\section{PROBLEM FORMULATION}\label{sec:formulation}

\subsection{The Non-Bayesian Learning Model}

We begin by describing our non-Bayesian learning model which is simply the extension of the model proposed in~\cite{jadbabaie2012non} to random network topologies.

As in~\cite{jadbabaie2012non}, we let $\Theta$ denote the (finite) set of possible states of the world and let $\theta^*\in\Theta$ denote the true state. We consider a social network of $n$ agents that seek to learn the identity of the true state with the help of their private measurements as well as their neighbors' beliefs.

\subsubsection{Beliefs and Observations}

 For each $i\in[n]$ and $t\in\N_0$, we let $\mu_{i,t}$ be the probability measure on $(\Theta,2^{\Theta})$ such that $\mu_{i,t}(\theta):=\mu_{i,t}(\{\theta\})$ denotes the degree of belief of agent $i$ in the state $\theta$ at time $t$. Also, for each $\theta\in \Theta$,  we let\\ $\mu_t(\theta):=[\mu_{1,t}(\theta)\,\,\mu_{2,t}(\theta)\,\, \ldots\,\, \mu_{n,t}(\theta)]^T\in[0,1]^n$.

As in~\cite{jadbabaie2012non}, we assume that the signal space (the space of privately observed signals) of each agent is finite. We let $S_i$ denote the signal space of  agent $i$, define $S:=S_1\times S_2\times\cdots\times S_n$, and let $\omega_t=(\omega_{1,t},\ldots,\omega_{n,t})\in S$ denote the vector of observed signals at time $t$. Further, we suppose that for each $t\in\N$, the vector $\omega_t$ is generated according to the conditional probability measure  $l(\cdot|\theta)$ given that $\theta^*=\theta$, i.e., $\omega_t$ is distributed according to $l(\cdot|\theta)$ if $\theta$ is the true state. 

We now repeat the assumptions made in~\cite{jadbabaie2012non}:
\begin{enumerate}
    \item $\{\omega_t\}_{t\in\N}$ is an i.i.d. sequence.
    \item For every $i\in[n]$ and $\theta\in\Theta$, agent $i$ knows $l_i(\cdot|\theta)$, the $i$\textsuperscript{th} marginal of $l(\cdot|\theta)$ (i.e., $l_{i}(s|\theta)$ is the conditional probability that $\omega_{i,t}=s$ given that $\theta$ is the true state).
    \item $l_i(s|\theta)>0$ for all $s\in S_i$, $i\in [n]$ and $\theta\in\Theta$. We let $\l_0:=\min_{\theta\in\Theta}\min_{i\in [n]}\min_{s_i\in S_i}l_i(s_i|\theta)>0$.
\end{enumerate}

In addition, it is possible that some agents do not have the ability to distinguish between certain states solely on the basis of their private measurements because these states induce the same conditional probability distributions on the agents' measurement signals. To describe such situations, we borrow the following definition from~\cite{jadbabaie2012non}.

\begin{definition} [\textbf{Observational equivalence}] \label{def:obs_equivalence}
Two states $\theta_1,\theta_2\in\Theta$ are said to be observationally equivalent from the point of view of agent $i$ if $l_i(\cdot|\theta_1)=\l_i(\cdot|\theta_2)$.
\end{definition}

For each $i\in[n]$, let $\Theta_i^*:=\{\theta\in\Theta: l_i(\cdot|\theta)=l_i(\cdot|\theta^*)\}$ denote the set of states that are observationally equivalent to the true state from the viewpoint of agent $i$. Also, let $\Theta^*:=\cap_{j\in [n]}\Theta_j^*$ be the set of states that are observationally equivalent to $\theta^*$ from every agent's viewpoint. Since we wish to identify the subsets of agents that can collectively distinguish between the true state and the false states, we define two related terms.

\begin{definition} [\textbf{Observational self-sufficience}] \label{def:obs_comp} If $\mathcal O\subset [n]$ is a set of agents such that $\cap_{j\in \mathcal O}\Theta_j^*=\Theta^*$, then $\mathcal O$ is said to be an observationally self-sufficient set.
\end{definition}

\begin{definition} [\textbf{Identifiability}] \label{def:identifiability} If $\Theta^*=\{\theta^*\}$, then the true state $\theta^*$ is said to be \textit{identifiable}. 
\end{definition}

\subsubsection{Network Structure and the Update Rule}

Let $\{G(t)\}_{t\in\N_0}$ denote the \blue{random} sequence of $n$-vertex directed graphs such that for each $t\in\N_0$, there is an arc from node $i\in [n]$ to node $j\in [n]$ in $G(t)$ if and only if agent $i$ influences agent $j$ at time $t$. Let $A(t) = (a_{ij}(t))$ be a stochastic weighted adjacency matrix of \blue{the random graph} $G(t)$, and for each $i\in [n]$, let $\mathcal N_i(t):=\{j\in [n]\setminus\{i\}:a_{ij}(t)>0\}$ denote the set of in-neighbors of agent $i$ in $G(t)$. We assume that at the beginning of the learning process (i.e., at $t=0$), agent $i$ has \blue{$\mu_{i,0}(\theta)\in [0,1]$} as her prior belief in state $\theta\in\Theta$. At time $t+1$, she updates her belief in $\theta$ as follows:
\begin{align}\label{eq:main}
    \mu_{i,t+1}(\theta)&=a_{ii}(t)\text{BU}_{i,t+1}(\theta)+\sum_{j\in\mathcal N_i(t)}a_{ij}(t)\mu_{j,t}(\theta),
\end{align}
where ``BU'' stands for ``Bayesian update'' and
$$
\text{BU}_{i,t+1}(\theta) := \frac{l_i(\omega_{i,t+1}|\theta)\mu_{i,t}(\theta)}{\sum_{\theta'\in\Theta}l_i(\omega_{i,t+1}|\theta')\mu_{i,t}(\theta')}.
$$

Finally, we let $(\Omega, \B,\P^*)$ be the probability space such that $\{\omega_t\}_{t=1}^\infty$ and $\{A(t)\}_{t=0}^\infty$ are measurable w.r.t. $\B$, and $\P^*$ is a probability measure such that:
$$
    \P^*(\omega_1=s_1,\omega_2=s_2,\ldots,\omega_r = s_r) = \prod_{t=1}^r l(s_t|\theta^*)
$$
for all $s_1,\ldots, s_r\in S$ and all $r\in\N\cup\{\infty\}$. As in~\cite{jadbabaie2012non}, we let $\mathbb E^*$ denote the expectation operator associated with $\mathbb P^*$.

\subsection{Forecasts and Convergence to the Truth}

At any time step $t$, agent $i$ can use her current set of beliefs to estimate the probability that she will observe the signals $s_1, s_2, \ldots, s_k\in S_i$ over the next $k$ time steps. This is referred to as the $k$\textit{-step-ahead forecast} of agent $i$ at time $t$ and denoted by $m_{i,t}^{(k)}(s_1,\ldots, s_k)$. We thus have:
$$
m_{i,t}^{(k)}(s_1,\ldots, s_k):=\sum_{\theta\in\Theta}\prod_{r=1}^k l_i(s_r|\theta)\mu_{i,t}(\theta).
$$
We use the following notions of convergence to the truth.

\begin{definition}[\textbf{Eventual Correctness} \cite{jadbabaie2012non}]\label{def:eventual_correct_weak_merge}
The $k$-step ahead forecasts of agent $i$ are said to be \textit{eventually correct} on a path $(A(0),\omega_1,A(1),\omega_2,\ldots)$ if, along that path,
$$
m_{i,t}^{(k)}(s_{1}, s_{2},\ldots, s_{k} )\rightarrow \prod_{j=1}^k l_i(s_{j}|\theta^*)\quad\text{as}\quad t\rightarrow\infty.
$$
\end{definition}
\blue{\begin{definition} [\textbf{Weak Merging to the Truth}~\cite{jadbabaie2012non}]
We say that the beliefs of agent $i$ \textit{weakly merge to the truth} on some path if, along that path,
her $k$-step-ahead forecasts are eventually correct for all $k\in\N$. 
\end{definition} }


\begin{definition}[\textbf{Asymptotic Learning} \cite{jadbabaie2012non}]
Agent $i\in [n]$ \textit{asymptotically learns the truth} on a path\\ $(A(0),\omega_1,A(1),\omega_2,\ldots)$ if, along that path, $\mu_{i,t}(\theta^*)\rightarrow 1$ \blue{(and hence $\mu_{i,t}(\theta)\rightarrow 0$ for all $\theta\in\Theta\setminus\{\theta^*\}$) } as $t\rightarrow\infty$.
\end{definition}

\blue{Note that, if the belief of agent $i$ weakly merges to the truth, it only means that agent $i$ is able to estimate the probability distributions of her future signals/observations with arbitrary accuracy as time goes to infinity. On the other hand, if agent $i$ asymptotically learns the truth, it means that, in the limit as time goes to infinity, agent $i$ rules out all the false states and correctly figures out that the true state is $\theta^*$. In fact, it can be shown that asymptotic learning implies weak merging to the truth, even though the latter does not imply the former~\cite{jadbabaie2012non}.}
\section{Revisiting Class $\mathcal P^*$: A Special Class of Stochastic Chains}\label{sec:p_star}
Our next goal is to deviate from the standard strong connectivity assumptions for social learning~\cite{jadbabaie2012non,liu2014social,liu2012social,zhao2012learning}. We first explain the challenges involved in this endeavor. To begin, we express \eqref{eq:main} as follows (Equation (4) in~\cite{jadbabaie2012non}):
\begin{align}\label{eq:vector_form}
    &\mu_{t+1}(\theta) - A(t)\mu_t(\theta)\cr
    &=\text{diag}\left(\ldots,a_{ii}(t)\left[\frac{l_i(\omega_{i,t+1}|\theta)}{m_{i,t}(\omega_{i,t+1})}-1\right],\ldots\right)\mu_t(\theta),
\end{align}
\blue{where $m_{i,t}(s):=m_{i,t}^{(1)}(s)$ for all $s\in S_i$.} Now, suppose $\theta=\theta^*$. Then, an extrapolation of the known results on non-Bayesian learning suggests the right-hand-side of~\eqref{eq:vector_form} decays to 0 almost surely as $t\rightarrow\infty$. This means that for large values of $t$ (say $t\geq T_0$ for some $T_0\in\N$), the dynamics~\eqref{eq:vector_form} for $\theta=\theta^*$ can be approximated as $\mu_{t+1}(\theta^*)\approx A(t)\mu_t(\theta^*)$. Hence, we expect the limiting value of $\mu_t(\theta^*)$ to be closely related to $\lim_{t\rightarrow\infty}A(t:T_0)$, whenever the latter limit exists. However, without standard connectivity assumptions, it is challenging to gauge the existence of limits of backward matrix products. 

To overcome this difficulty, we use the notion of Class $\pstar$ introduced in~\cite{touri2012product}. This notion is based on Kolmogorov's ingenious concept of absolute probability sequences, which we now define. 

\begin{definition}  [\textbf{Absolute Probability Sequence} \cite{touri2012product}] \label{def:abs_prob_seq} Let $\{A(t)\}_{t=0}^\infty$ be either a deterministic stochastic chain or a random process of independently distributed stochastic matrices.
A deterministic sequence of stochastic vectors $\{\pi(t)\}_{t=0}^\infty$ is said to be an absolute probability sequence for $\{A(t)\}_{t=0}^\infty$ if 
$$
\pi^T(t+1)\E[A(t)]=\pi^T(t)\quad\text{for all } t\geq 0.
$$
\end{definition}
Note that every deterministic stochastic chain admits an absolute probability sequence \cite{kolmogoroff1936theorie}. Hence, every random sequence of independently distributed stochastic matrices also admits an absolute probability sequence. 

Of interest to us is a special class of random stochastic chains that are associated with absolute probability sequences satisfying a certain condition. This class is defined below. 

\begin{definition} [\textbf{Class $\mathbf{\mathcal{P}^*}$} \label{def:pstar} \cite{touri2012product}] We let (Class-)$\mathcal P^*$ be the set of random stochastic chains that admit an absolute probability sequence $\{\pi(t)\}_{t=0}^\infty$ such that $\pi(t)\geq p^*\mathbf 1$ for some scalar $p^*>0$ and all $t\in\N_0$. 
\end{definition}

\blue{Remarkably, in scenarios involving a linear aggregation of beliefs, if $\{\pi(t)\}_{t=0}^\infty$ is an absolute probability sequence for $\{A(t)\}_{t=0}^\infty$, then $\pi_i(t)$ denotes the \textit{Kolmogorov centrality} or \textit{social power} of agent $i$ at time $t$, which quantifies how influential the $i$-th agent is relative to other agents at time $t$~\cite{touri2012product,molavi2017foundations}. In view of Definition~\ref{def:pstar}, this means that, if a stochastic chain belongs to Class $\pstar$, then the expected chain describes a sequence of influence graphs in which the social power of every agent exceeds a fixed threshold $p^*>0$ at all times.}
\blue{Let us now now look at a concrete example.}
\blue{\begin{example}\label{eg:not_balanced}
Suppose $A(t)=A_e$ for all even $t\in\N_0$, and $A(t) = A_o$ for all odd $t\in \N_0$, where $A_e$ and $A_o$ are the matrices defined below:
\[ A_e := \left( \begin{array}{cc}
1 & 0 \\
\frac{1}{2} & \frac{1}{2}
\end{array} \right), \quad A_o:=
\left( \begin{array}{cc}
\frac{1}{2} & \frac{1}{2} \\
0 & 1
\end{array} \right).
\]
Then one may verify that the alternating sequence $[\frac{2}{3}\,\,\frac{1}{3}]^T,[\frac{1}{3}\,\,\frac{2}{3}]^T, [\frac{2}{3}\,\,\frac{1}{3}]^T,\ldots$ is an absolute probability sequence for the chain $\{A(t)\}_{t=0}^\infty$. Hence, $\{A(t)\}_{t=0}^\infty\in \pstar$.\\
\indent Let us now add a zero-mean independent noise sequence $\{W(t)\}_{t=0}^\infty$ to the original chain, where for all even $t\in\N_0$, the matrix $W(t)$ is the all-zeros matrix (and hence a degenerate random matrix), and for all odd $t\in\N_0$, the matrix $W(t)$ is uniformly distributed on $\{W_0, -W_0\}$, with $W_0$ given by
$$ W_0 := 
\begin{pmatrix}
-\frac{1}{2} & \frac{1}{2}\\
0 & 0
\end{pmatrix}.
$$
Then by Theorem 5.1 in~\cite{touri2012product}, the random stochastic chain $\{A(t) + W(t) \}_{t=0}^\infty$ belongs to Class $\pstar$ because the expected chain $\{\E[A(t) + W(t)] \}_{t=0}^\infty= \{A(t)\}_{t=0}^\infty$ belongs to Class $ \pstar$.
\end{example}
\begin{remark}\label{rem:random_not_deterministic} Interestingly, Example~\ref{eg:not_balanced} illustrates that a random stochastic chain may belong to Class $\pstar$ even though almost every realization of the chain lies outside Class $\pstar$. To elaborate, consider the setup of Example~\ref{eg:not_balanced}, and let $\tilde A(t):=A(t) + W(t)$. Observe that $A_o - W_0=I$, which means that for any $B\in \N$ and $t_1\in \N_0$, the probability that $A(t) + W(t) = I$ for all odd $t\in \{t_1,\ldots, t_1+2B-1\}$ is  $\left(\frac{1}{2} \right)^B>0$. Since $\{W(t)\}_{t=0}^\infty$ are independent, it follows that for $\mathbb P^*$-almost every realization $\{\hat A(t)\}_{t=0}^\infty$ of $\{\tilde A(t)\}_{t=0}^\infty$, there exists a time $\tau\in \N_0$ such that $\hat A(\tau + 2B:\tau) = A_e\cdot I\cdot A_e\cdot I\cdots A_e\cdot I = A_e^B$. Therefore, if $\{\pi_R(t)\}_{t=0}^\infty$ is an absolute probability sequence for the deterministic chain $\{\hat A(t)\}_{t=0}^\infty$, we can use induction along with Definition~\ref{def:abs_prob_seq} to show that $\pi_R^T(\tau+2B)\hat A(\tau+2B:\tau)$ equals $\pi_R^T(\tau) $. Thus,
$$
\pi_R^T(\tau) = \pi_R^T(\tau+2B) A_e^B \leq \allone^T  A_e^B.
$$
Since the second entry of $\allone^T A_e^B$ evaluates to $\frac{1}{2^B}$, and since $B$ is arbitrary, it follows that there is no lower bound $p^*>0$ on the second entry of $\pi_R(\tau)$. Hence, $\{\hat A(t)\}_{t=0}^\infty\notin\pstar$, implying that $\mathbb P^*$-almost no realization of $\{\tilde A(t)\}$ belongs to Class $\pstar$.
\end{remark}}

\blue{We now turn to a noteworthy subclass of Class $\pstar$: the class of \textit{uniformly strongly connected} chains (Lemma 5.8,~\cite{touri2012product}). } Below is the definition of this subclass (reproduced from~\cite{touri2012product}).

\begin{definition}[\textbf{Uniform Strong Connectivity}]\label{def:unif_strong_connect}  A deterministic stochastic chain $\{A(t)\}_{t=0}^\infty$ is said to be uniformly strongly connected if:
\begin{enumerate}
    \item there exists a $\delta>0$ such that for all $i,j\in[n]$ and all $t\in\N_0$, either $a_{ij}(t)\geq \delta$ or $a_{ij}(t)=0$,
    \item \label{item:strong_feedback} $a_{ii}(t)>0$ for all $i\in[n]$ and all $t\in \N_0$, and
    \item \label{item:b_strong} there exists a constant $B\in\N$ such that the sequence of directed graphs $\{G(t)\}_{t=0}^\infty$, defined by $G(t) = ([n], E(t))$ where $ E(t):=\{ (i,j)\in [n]^2: a_{ji}(t)>0\}$, has the property that the graph:
    $$
        \G(k):=\left([n], \bigcup_{q=kB}^{(k+1)B-1} E(q)\right )
    $$
    is strongly connected for every $k\in \N_0$.
\end{enumerate}
\end{definition}
Due to the last requirement above, uniformly strongly connected chains are also called $B$-strongly connected chains or simply $B$-connected chains. Essentially, a $B$-connected chain describes a time-varying network that may or may not be connected at every time instant but is guaranteed to be connected over bounded time intervals that occur periodically.

Besides uniformly strongly connected chains, we are interested in another subclass of Class $\pstar$: the set of independent \textit{balanced} chains with \textit{feedback property} (Theorem 4.7, ~\cite{touri2012product}). 

\begin{definition}[\textbf{Balanced chains}] \label{def:balanced_chains} A stochastic chain $\{A(t)\}_{t=0}^\infty$ is said to be balanced if there exists an $\alpha\in(0,\infty)$ such that:
\begin{equation}\label{eq:balance}
    \sum_{i\in C}\sum_{j\in [n]\setminus C} a_{ij}(t) \geq \alpha \sum_{i\in [n]\setminus C}\sum_{j\in C} a_{ij}(t)
\end{equation}
for all sets $C\subset[n]$ and all $t\in\N_0$.
\end{definition}

\begin{definition} [\textbf{Feedback property}] Let $\{A(t)\}_{t=0}^\infty$ be a random stochastic chain, and let $\F_t:=\sigma(A(0),\ldots, A(t-1))$ for all $t\in\N$. Then $\{A(t)\}_{t=0}^\infty$ is said to have feedback property if there exists a $\delta>0$ such that
$$
    \E[a_{ii}(t) a_{ij}(t)|\F_t]\geq \delta \E[a_{ij}(t)|\F_t]\quad\textit{a.s.}
$$
for all $t\in\N_0$ and all distinct $i,j\in[n]$.
\end{definition}

Intuitively, a balanced chain is a stochastic chain in which the total influence of any subset of agents $C\subset[n]$ on the complement set $\bar C:=[n]\setminus C$ is neither negligible nor tremendous when compared to the total influence of $\bar C$ on $C$. As for the feedback property, we relate its definition to the \textit{strong feedback property}, which has a clear interpretation. 
\begin{definition} [\textbf{Strong feedback property}]
We say that a random stochastic chain $\{A(t)\}_{t=0}^\infty$ has the strong feedback property with feedback coefficient $\delta$ if there exists a $\delta>0$ such that $a_{ii}(t)\geq \delta$ \textit{a.s.} for all $i\in [n]$ and all $t\in\N_0$.
\end{definition}
Intuitively, a chain with the strong feedback property describes a network in which all the agents' self-confidences are always above a certain threshold.

To see how the strong feedback property is related to the (regular) feedback property, note that by Lemma 4.2 of~\cite{touri2012product}, if $\{A(t)\}_{t=0}^\infty$ has feedback property, then the expected chain, $\{\E[A(t)]\}_{t=0}^\infty$ has the strong feedback property. Thus, a balanced independent chain with feedback property describes a network in which complementary sets of agents influence each other to comparable extents, and every agent's mean self-confidence is always above a certain threshold.
\blue{
\begin{remark} \label{rem:first}
It may appear that every stochastic chain belonging to Class $\pstar$ is either uniformly strongly connected or  balanced with feedback property, but this is not true. Indeed, one such chain is described in Example~\ref{eg:not_balanced}, wherein we have $A(t)+W(t)=A_e$ for even $t\in\N_0$, which implies that~\eqref{eq:balance} is violated at even times. Hence, $\{A(t)+W(t)\}_{t=0}^\infty$ is not a balanced chain. As for uniform strong connectivity, recall from Remark~\ref{rem:random_not_deterministic} that $\mathbb P^*$-almost every realization of $\{A(t)+W(t)\}_{t=0}^\infty$ lies outside Class $\pstar$. Since Class $\pstar$ is a superset of the class of uniformly strongly connected chains  (Lemma 5.8,~\cite{touri2012product}), it follows that $\{A(t)+W(t)\}_{t=0}^\infty$ is almost surely not uniformly strongly connected.
\end{remark}
\begin{remark}\label{rem:second}~\cite{touri2013product} provides examples of subclasses of Class $\mathcal P^*$ chains that are not  uniformly strongly connected, such as the class of doubly stochastic chains. For instance, let $\mathcal D\subset\R^{n\times n}$ be any finite collection of doubly stochastic matrices such that $I\in\mathcal D$, and let $\{A(t)\}_{t=0}^\infty$ be a sequence of i.i.d.\ random matrices, each of which is uniformly distributed on $\mathcal D$. Then $\{A(t)\}_{t=0}^\infty$, being a doubly stochastic chain, belongs to Class $\pstar$ (see~\cite{touri2013product}).  
Now, for any $B\in \N$ and $t_1\in \N_0$, the probability that $A(t)=I$ for all $t\in\{t_1,\ldots, t_1+B-1\}$ equals $\frac{1}{|\mathcal D|^B}>0$. In light of the independence of $\{A(t)\}_{t=0}^\infty$, this implies that there almost surely exists a time interval $\mathcal T$ of length $B$ such that $A(t)= I$ for all $t\in\mathcal T$, implying that there is no connectivity in the network during the interval $\mathcal T$. As the interval duration $B$ is arbitrary, this means that the chain $\{A(t)\}_{t=0}^\infty$ is almost surely not uniformly strongly connected.
\end{remark}}
\section{The Main Result and its Derivation}\label{sec:main_result}
We first introduce a network connectivity concept called $\gamma$-\textit{epoch}, which plays a key role in our main result.

\begin{definition} [\textbf{$\gamma$-epoch}]\label{def:gamma_epoch} For a given $\gamma>0$ and $t_s,t_f\in\N$ satisfying $t_s<t_f$, the time interval $[t_s,t_f]$ is a $\gamma$-epoch if, for each $i\in [n]$, there exists an observationally self-sufficient set of agents, $\mathcal O_i\subset[n]$, and a set of time instants $\mathcal T_i\subset\{t_s+1,\ldots,t_f\}$ such that for every $j\in \mathcal O_i$, there exists a $t\in \mathcal T_i$ satisfying $a_{jj}(t)\geq \gamma$ and $(A(t:t_s))_{ji}\geq\gamma$. Moreover, if $[t_s,t_f]$ is a $\gamma$-epoch, then $t_f-t_s$ is the \textit{ epoch duration}.
\end{definition}

As an example, if $n\geq 9$ and if the sets $\{2,5,9\}$ and $\{7,9\}$ are observationally self-sufficient, then Fig. \ref{fig:epoch} illustrates the influences of agents $1$ and $n$ in the $\gamma$-epoch $[0,5]$.

\begin{figure}[h] 
\centering
\includegraphics[scale = 0.55] {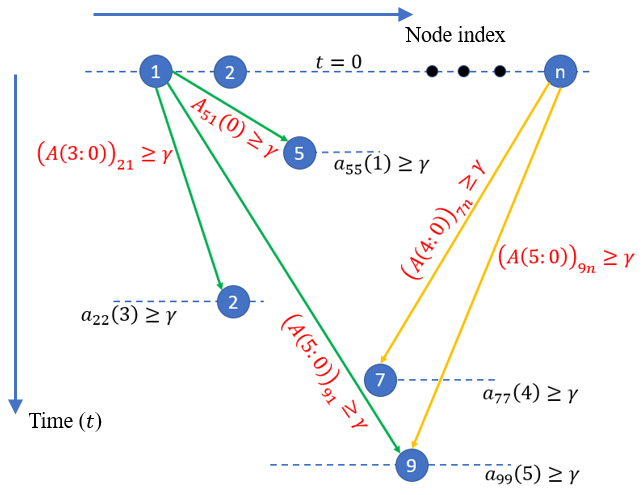}
\caption{Example of a $\gamma$-epoch (from the viewpoint of nodes $1$ and $n$)}
\label{fig:epoch}
\end{figure}

Intuitively, $\gamma$-epochs are time intervals over which every agent \blue{strongly influences} an observationally self-sufficient set of agents whose self-confidences are guaranteed to be above a certain threshold at the concerned time instants.

We now list the assumptions underlying our main result.
\begin{enumerate}[I]
    \item \label{item:gamma_epoch} \textbf{{(Recurring $\gamma$-epochs).}}  There exist constants $\gamma>0$ and $B\in\N$, and an increasing sequence $\{t_k\}_{k=1}^\infty\subset\N$ such that $t_{2k} - t_{2k-1}\leq B$ for all $k\in\N$, and $$\sum_{k=1}^\infty \mathbb P^*\left([t_{2k-1},t_{2k}]\text{ is a }\gamma\text{-epoch}\right)=\infty.$$
    This means that the probability of occurrence of a $\gamma$-epoch of bounded duration does not vanish too fast with time. Note, however, that $t_{2k+1}-t_{2k}$ (the time elapsed between two consecutive candidate $\gamma$-epochs) may be unbounded.
    \item \label{item:positive_prior} \textbf{(Existence of a positive prior).} There exists an agent $j_0\in [n]$ such that $\mu_{j_0,0}(\theta^*)>0$, i.e., the true state is not ruled out entirely by every agent. We assume w.l.o.g. that $j_0=1$. 
    
    \item \label{item:initial_connectivity} \textbf{(Initial connectivity with the agent with the positive prior).}\label{initial_connectivity} There \textit{a.s.} exists a random time $T<\infty$  such that\footnote{In general, if $Q:=\{j\in [n]:\mu_{j,0}(\theta^*)>0\}$, then we only need 
    $\E^*[\max_{j\in \blue{Q} } \log( (A(T:0))_{ij} )]>-\infty$.
    } $\E^*\left[\log \left(A(T:0)\right)_{i1} \right]>-\infty$ for all $i\in[n]$.
    \item \label{item:pstar} \textbf{(Class $\pstar$).} $\{A(t)\}_{t=0}^\infty\in\pstar$, i.e., the sequence of weighted adjacency matrices of the network belongs to Class $\pstar$ w.r.t. the probability measure $\P^*$.
    \item \label{item:indep} \textbf{(Independent chain).} $\{A_t\}_{t=0}^\infty$ is a $\mathbb P^*$-independent chain.
    \item \label{item:seq_indep}  \textbf{(Independence of observations and network structure).} The sequences $\{\omega_t\}_{t=1}^\infty$ and $\{A_t\}_{t=0}^\infty$ are $\mathbb P^*$-independent of each other.
\end{enumerate}
    
We are now ready to state our main results.

\begin{theorem}\label{thm:main}
    Suppose that the sequence $\{A(t)\}_{t=0}^\infty$ and the agents' initial beliefs satisfy Assumptions~\ref{item:positive_prior} -~\ref{item:seq_indep}. Then:
    \begin{enumerate}[(i)]
        \item\label{item:weak_merge} If $\{A(t)\}_{t=0}^\infty$ either has the strong feedback property or satisfies Assumption~\ref{item:gamma_epoch}, then every agent's beliefs weakly merge to the truth $\P^*$-\textit{a.s.} (i.e., $\P^*$-almost surely).
        \item\label{item:asymptotic_learning} If Assumption~\ref{item:gamma_epoch} holds and $\theta^*$ is identifiable, then all the agents asymptotically learn the truth $\P^*$-\textit{a.s.} 
    \end{enumerate}
\end{theorem}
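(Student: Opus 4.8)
The plan is to combine a supermartingale built from the Class-$\pstar$ absolute probability sequence (which keeps $\theta^*$ alive and makes one-step forecasts accurate) with a propagation argument driven by the recurring $\gamma$-epochs. Throughout I work with the filtration $\mathcal F_t:=\sigma(A(0),\dots,A(t-1),\omega_1,\dots,\omega_t)$, with respect to which $\mu_t$ is measurable while $A(t)$ and $\omega_{t+1}$ are independent of $\mathcal F_t$ by Assumptions~\ref{item:indep}--\ref{item:seq_indep}, and I fix an absolute probability sequence $\{\pi(t)\}$ for $\{A(t)\}$ with $\pi(t)\ge p^*\allone$, which exists by Assumption~\ref{item:pstar} and Definitions~\ref{def:abs_prob_seq}--\ref{def:pstar}. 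Writing \eqref{eq:main} as a combination with weights $a_{ij}(t)$ (summing to $1$) and using concavity of $\log$, one obtains the entrywise bound $\log\mu_{t+1}(\theta^*)\ge A(t)\log\mu_t(\theta^*)+b_t$ with $b_{i,t}=a_{ii}(t)\log\frac{l_i(\omega_{i,t+1}\mid\theta^*)}{m_{i,t}(\omega_{i,t+1})}$. Setting $W_t:=-\pi^T(t)\log\mu_t(\theta^*)\ge 0$ and taking $\mathbb E^*[\cdot\mid\mathcal F_t]$, the identity $\pi^T(t+1)\mathbb E^*[A(t)]=\pi^T(t)$ collapses the linear part to $W_t$, while $\mathbb E^*\big[\log\tfrac{l_i(\omega_{i,t+1}\mid\theta^*)}{m_{i,t}(\omega_{i,t+1})}\mid\mathcal F_t\big]=\infdiv{l_i(\cdot\mid\theta^*)}{m_{i,t}}\ge0$ shows the residual drift is non-positive. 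Hence $\{W_t\}$ is a non-negative supermartingale for $t\ge T$, where $T$ is the time of Assumption~\ref{item:initial_connectivity}, whose sole purpose is to guarantee—via the belief-flow bound $\mu_{i,T}(\theta^*)\ge l_0^{\,T}(A(T:0))_{i1}\mu_{1,0}(\theta^*)$ and Assumption~\ref{item:positive_prior}—that $W_T<\infty$ a.s. Supermartingale convergence then gives that $W_t\to W_\infty<\infty$ a.s. (so each $\mu_{i,t}(\theta^*)\ge e^{-W_t/p^*}$ stays bounded away from $0$) and that $\sum_t\sum_i\pi_i(t+1)\mathbb E^*[a_{ii}(t)]\,\infdiv{l_i(\cdot\mid\theta^*)}{m_{i,t}}<\infty$ a.s.

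For part~\eqref{item:weak_merge}, I would first note that, because the frozen-belief $k$-step forecast is a mixture of product measures (mutually singular across distinct $l_i(\cdot\mid\theta)$), weak merging amounts to $\sum_{\theta\notin\Theta_i^*}\mu_{i,t}(\theta)\to0$. Under the strong feedback property $\mathbb E^*[a_{ii}(t)]\ge\delta$, so the summability above forces $\infdiv{l_i(\cdot\mid\theta^*)}{m_{i,t}}\to0$; under Assumption~\ref{item:gamma_epoch} the same holds along the recurring epochs, where self-confidences exceed $\gamma$. To upgrade this one-step accuracy to genuine concentration I pass to the ratios $q_{i,t}(\theta):=\mu_{i,t}(\theta)/\mu_{i,t}(\theta^*)$, which obey the \emph{exactly linear} recursion $q_{t+1}(\theta)=\widetilde A(t)\,q_t(\theta)$, where $\widetilde A(t)$ is the $\theta^*$-tilt of $A(t)$ carrying the likelihood-ratio factor $l_i(\omega_{i,t+1}\mid\theta)/l_i(\omega_{i,t+1}\mid\theta^*)$ on its diagonal; this factor equals $1$ at every agent with $\theta\in\Theta_i^*$ and has strictly negative conditional log-mean $-\infdiv{l_i(\cdot\mid\theta^*)}{l_i(\cdot\mid\theta)}$ at every agent distinguishing $\theta$. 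Since $\mu_{i,t}(\theta^*)$ is bounded away from $0$, the tilted chain again lies in Class~$\pstar$—its candidate absolute probability sequence being $\pi_i(t)\mu_{i,t}(\theta^*)$ up to a correction that is summable by the first paragraph—so the Class-$\pstar$ machinery applied to $\{\widetilde A(t)\}$ should drive $q_{i,t}(\theta)\to0$, hence $\mu_{i,t}(\theta)\to0$, for every $\theta\notin\Theta_i^*$.

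For part~\eqref{item:asymptotic_learning}—the statement at hand—I first establish that the $\gamma$-epochs recur. The candidate intervals $[t_{2k-1},t_{2k}]$ are disjoint and the event ``$[t_{2k-1},t_{2k}]$ is a $\gamma$-epoch'' is measurable with respect to the matrices indexed inside it, so Assumption~\ref{item:indep} renders these events independent and the divergence in Assumption~\ref{item:gamma_epoch} lets the second Borel--Cantelli lemma place infinitely many $\gamma$-epochs $\P^*$-a.s. Under identifiability $\Theta^*=\{\theta^*\}$, and since $\cap_{j\in\mathcal O}\Theta_j^*=\Theta^*$ for every observationally self-sufficient $\mathcal O$ (Definition~\ref{def:obs_comp}), each such $\mathcal O$ contains, for every false state $\theta$, an agent that distinguishes $\theta$ from $\theta^*$; by part~\eqref{item:weak_merge} that agent's belief in $\theta$ tends to $0$. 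Now fix $i$ and $\theta\ne\theta^*$: at the start $t_s=t_{2k-1}$ of a recurring epoch, iterating $\mu_{r+1}(\theta)\ge l_0\,A(r)\mu_r(\theta)$ yields $\mu_{j,t}(\theta)\ge l_0^{\,t-t_s}(A(t:t_s))_{ji}\,\mu_{i,t_s}(\theta)$, and with $(A(t:t_s))_{ji}\ge\gamma$, $t_f-t_s\le B$ (Definition~\ref{def:gamma_epoch}) this gives $\mu_{i,t_{2k-1}}(\theta)\le l_0^{-B}\gamma^{-1}\,\mu_{j,t}(\theta)$ for the distinguishing $j\in\mathcal O_i$. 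Letting $k\to\infty$ and invoking weak merging at the finitely many distinguishers of $\theta$ forces $\mu_{i,t_{2k-1}}(\theta)\to0$ for all $i$ and all $\theta\ne\theta^*$, i.e.\ $W_{t_{2k-1}}\to0$; since $W_t$ converges and a subsequence tends to $0$, we conclude $W_\infty=0$, hence $\mu_{i,t}(\theta^*)\to1$ for every $i$, which is asymptotic learning.

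The main obstacle is the concentration step of the second paragraph—converting one-step forecast accuracy into the vanishing of belief on every distinguishable state. The difficulty is structural: linear neighbour-averaging resists the purely multiplicative, only-log-mean-negative Bayesian contraction, so an agent distinguishing $\theta$ can still have $\mu_{i,t}(\theta)$ propped up by neighbours that cannot, and it is precisely Assumption~\ref{item:initial_connectivity} together with the Class-$\pstar$ structure that must exclude this. Making the tilted chain $\{\widetilde A(t)\}$ rigorously a Class-$\pstar$ chain—showing the corrections to the candidate absolute probability sequence $\pi_i(t)\mu_{i,t}(\theta^*)$ are summable and preserve a uniform positive lower bound—is the crux, and embodies exactly the ``dynamics that approximate a linear system arbitrarily well in the limit'' phenomenon flagged in the introduction. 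A secondary, more routine obstacle is the integrability bookkeeping around the random time $T$, which I would handle by localizing on $\{T=t_0\}$ before invoking supermartingale convergence.
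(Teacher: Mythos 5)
Your scaffolding matches the paper's in spirit: the submartingale $\pi^T(t)\log\mu_t(\theta^*)$ built from the Class-$\pstar$ absolute probability sequence, Borel--Cantelli for recurring epochs, the influence bound of Lemma~\ref{lem:influence_relation} to pull concentration back from the self-sufficient set to agent $i$ along epoch times, and a convergent (super)martingale to upgrade the subsequence limit to a full limit (the paper uses the bounded submartingale $\pi^T(t)\sum_{\theta\in\Theta^*}\mu_t(\theta)$ where you use $W_\infty=0$; both work). However, there is a genuine gap exactly where you flag one, and your proposed repair does not close it. Your supermartingale compensator yields only \emph{one-step} forecast accuracy, $\infdiv{l_i(\cdot|\theta^*)}{m_{i,t}(\cdot)}\to 0$, and one-step accuracy does \emph{not} imply $\sum_{\theta\notin\Theta_i^*}\mu_{i,t}(\theta)\to 0$: with $S_i=\{0,1\}$, $l_i(1|\theta^*)=\tfrac12$, $l_i(1|\theta_1)=\tfrac14$, $l_i(1|\theta_2)=\tfrac34$, the belief $\mu_{i,t}(\theta_1)=\mu_{i,t}(\theta_2)=\tfrac12$ gives an exactly correct one-step forecast with zero mass on $\theta^*$. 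The equivalence ``weak merging $\Leftrightarrow$ concentration on $\Theta_i^*$'' that you invoke requires correct $k$-step forecasts for \emph{all} $k$ (so that the product measures become asymptotically mutually singular), and the entire content of the paper's Lemma~\ref{lem:modified_forecast_limit} is the induction that upgrades the one-step statement to $a_{ii}(t)\bigl(m_{i,t}^{(h)}(s_1,\dots,s_h)-\prod_{r=1}^h l_i(s_r|\theta^*)\bigr)\to 0$ for every $h$. That induction is the missing idea in your proposal.

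Your substitute --- the tilted chain $q_{i,t}(\theta)=\mu_{i,t}(\theta)/\mu_{i,t}(\theta^*)$ --- is unlikely to work in the form sketched. Even granting that $\{\widetilde A(t)\}$ admits a uniformly positive absolute probability sequence, Class-$\pstar$ membership is a \emph{conservation} property (it pins down a non-vanishing weighted average of the iterates); it provides no mechanism for driving $q_{i,t}(\theta)$ to $0$. The contraction must come from the negative conditional log-mean of the likelihood ratio at agents that distinguish $\theta$, and --- as you yourself observe --- linear averaging lets non-distinguishing neighbours prop up $q_{i,t}(\theta)$ at a distinguishing agent, so a per-agent multiplicative argument does not close. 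The paper avoids this entirely: it never needs the tilted chain, because Lemma~\ref{lem:modified_forecast_limit} plus the total-variation/singularity argument (Proposition 3 of the Jadbabaie et al.\ reference) yields concentration directly at every agent whose self-confidence is bounded below along a sequence of times --- which is all agents at all times under the strong feedback property, and the agents of the observationally self-sufficient sets $\mathcal O_i$ at the epoch times under Assumption~\ref{item:gamma_epoch}. With that lemma in hand, your epoch-propagation and $W_\infty=0$ arguments for part~\eqref{item:asymptotic_learning} go through; without it, both parts remain unproven.
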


\blue{Theorem~\ref{thm:main} applies to stochastic chains belonging to Class $\pstar$, and hence to scenarios in which the social power (Kolmogorov centrality) of each agent exceeds a fixed positive threshold at all times in the expectation sense (see Section~\ref{sec:p_star}). While Part~\eqref{item:weak_merge} identifies the recurrence of $\gamma$-epochs as a sufficient connectivity condition for the agents' forecasts to be eventually correct, Part~\eqref{item:asymptotic_learning} asserts that, if $\gamma$-epochs are recurrent and if the agents' observation methods enable them to collectively distinguish the true state from all other states, then they will learn the true state asymptotically almost surely.} 

\blue{Note that the sufficient conditions provided in Theorem~\ref{thm:main} do not include uniform strong connectivity. However, it turns out that uniform strong connectivity as a connectivity criterion is sufficient not only for almost-sure weak merging to the truth but also for ensuring that all the agents asymptotically agree with each other almost surely, even when the true state is not identifiable. We state this result formally below.}\blue{\begin{theorem}\label{thm:usc}
    Suppose Assumption~\ref{item:positive_prior} holds, and suppose $\{A(t)\}_{t=0}^\infty$ is deterministic and uniformly strongly connected. Then, all the agents' beliefs converge to a consensus $\P^*$-\textit{a.s.}, i.e., for each $\theta\in\Theta$, there exists a random variable $C_\theta\in[0,1]$ such that $\lim_{t\rightarrow\infty} \mu_t(\theta) = C_{\theta}\allone$ \textit{a.s.} 
\end{theorem}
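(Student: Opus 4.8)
The plan is to reduce Theorem~\ref{thm:usc} to Theorem~\ref{thm:main}\eqref{item:weak_merge} and then upgrade weak merging to consensus by exploiting the averaging structure of \eqref{eq:vector_form} together with an absolute probability sequence of the chain. First I would check that a deterministic uniformly strongly connected chain meets the hypotheses of Theorem~\ref{thm:main}\eqref{item:weak_merge}: such a chain is trivially a $\P^*$-independent chain that is independent of $\{\omega_t\}$, so Assumptions~\ref{item:indep} and~\ref{item:seq_indep} hold; Definition~\ref{def:unif_strong_connect} forces $a_{ii}(t)\ge\delta$ for all $i,t$, i.e.\ the strong feedback property, and by Lemma~5.8 of~\cite{touri2012product} the chain lies in Class $\pstar$, giving Assumption~\ref{item:pstar}; finally, strong connectivity of the union graphs $\G(k)$ yields, for $T$ a suitable multiple of $B$, a deterministic bound $(A(T:0))_{i1}\ge\delta^{T}>0$ for every $i$, so Assumption~\ref{item:initial_connectivity} holds. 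Hence every agent's beliefs weakly merge to the truth $\P^*$-a.s.; in particular $m_{i,t}(s)\to l_i(s\mid\theta^*)$ for every $s\in S_i$.

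Next I would extract two consequences of weak merging. Writing \eqref{eq:vector_form} as $\mu_{t+1}(\theta)=A(t)\mu_t(\theta)+e(t,\theta)$ with $e_i(t,\theta)=a_{ii}(t)\mu_{i,t}(\theta)\bigl(\tfrac{l_i(\omega_{i,t+1}\mid\theta)}{m_{i,t}(\omega_{i,t+1})}-1\bigr)$, I claim $e(t,\theta)\to\mathbf 0$ a.s.\ for \emph{every} $\theta$. Indeed, $m_{i,t}(s)\ge l_0>0$ makes the bracket uniformly bounded; if $\theta\in\Theta_i^*$ then $l_i(\cdot\mid\theta)=l_i(\cdot\mid\theta^*)$ and weak merging drives the bracket to $0$, whereas if $\theta\notin\Theta_i^*$ the standard linear-independence-of-product-likelihoods argument shows weak merging forces $\mu_{i,t}(\theta)\to0$; either way $e_i(t,\theta)\to0$. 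The same dichotomy gives the second consequence: for every $\theta\notin\Theta^*=\cap_j\Theta_j^*$ there is an agent $j$ with $\mu_{j,t}(\theta)\to0$.

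I would then prove consensus in two stages using an absolute probability sequence $\{\pi(t)\}$ with $\pi(t)\ge p^*\allone$ (Assumption~\ref{item:pstar}). Stage one (disagreement vanishes): unrolling gives $\mu_t(\theta)=A(t:0)\mu_0(\theta)+\sum_{s=0}^{t-1}A(t:s+1)e(s,\theta)$, and since a uniformly strongly connected chain is weakly ergodic, the products $A(t:s+1)$ contract disagreement geometrically over windows of length $B$; feeding the bounded, vanishing perturbation $e(s,\theta)$ through this contraction (a discrete input-to-state argument, or equivalently monitoring the $\pi$-weighted quadratic $\sum_i\pi_i(t)\mu_{i,t}(\theta)^2$) yields $\max_i\mu_{i,t}(\theta)-\min_i\mu_{i,t}(\theta)\to0$ for every $\theta$. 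Stage two (the common value converges): set $V_t(\theta):=\pi^T(t)\mu_t(\theta)$ and $\mathcal H_t:=\sigma(\omega_1,\ldots,\omega_t)$. Using $\pi^T(t+1)\E[A(t)]=\pi^T(t)$ I get $V_{t+1}(\theta)=V_t(\theta)+\pi^T(t+1)e(t,\theta)$, and for $\theta\in\Theta^*$ Cauchy--Schwarz gives $\E^*\bigl[\tfrac{l_i(\omega_{i,t+1}\mid\theta^*)}{m_{i,t}(\omega_{i,t+1})}\mid\mathcal H_t\bigr]=\sum_s\tfrac{l_i(s\mid\theta^*)^2}{m_{i,t}(s)}\ge\bigl(\sum_s l_i(s\mid\theta^*)\bigr)^2/\sum_s m_{i,t}(s)=1$, so the conditional increment is nonnegative and $V_t(\theta)$ is a bounded submartingale, hence convergent a.s. Combined with stage one, this gives $\mu_{i,t}(\theta)\to C_\theta:=\lim_t V_t(\theta)$ for every $i$; for $\theta\notin\Theta^*$, stage one together with the vanishing of some $\mu_{j,t}(\theta)$ forces $C_\theta=0$. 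Thus $\mu_t(\theta)\to C_\theta\allone$ for all $\theta$, as required.

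I expect the main obstacle to be stage two rather than stage one: a perturbation that merely vanishes but is not summable does not in general let the consensus value settle, so the disagreement argument alone cannot pin down $C_\theta$. The decisive step is recognizing that $\pi^T(t)\mu_t(\theta)$ is a bounded submartingale for $\theta\in\Theta^*$ --- the Cauchy--Schwarz lower bound on the conditional Bayesian-update factor is exactly what supplies the submartingale property --- while for $\theta\notin\Theta^*$ the value is externally pinned to $0$ by a distinguishing agent. A secondary point to handle with care is the passage from weak merging to $\mu_{i,t}(\theta)\to0$ on states with $l_i(\cdot\mid\theta)\ne l_i(\cdot\mid\theta^*)$, which rests on the linear independence of the product likelihoods over long horizons.
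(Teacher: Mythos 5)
Your proposal is correct and follows essentially the same route as the paper: verify that a uniformly strongly connected chain satisfies the hypotheses of Theorem~\ref{thm:main}\eqref{item:weak_merge} (strong feedback, Class $\pstar$ via Lemma 5.8 of~\cite{touri2012product}, initial connectivity), kill the disagreement $\max_i\mu_{i,t}(\theta)-\min_i\mu_{i,t}(\theta)$ by unrolling~\eqref{eq:vector_form} into $A(t:0)\mu_0(\theta)+\sum_k A(t:k+1)\rho_k(\theta)$ and feeding the vanishing perturbation through the geometric contraction of backward products (the paper does this with the comparison function $V_\pi$ and the Toeplitz lemma), and then pin down the limit by showing $\pi^T(t)\mu_t(\theta)$ is a bounded submartingale for $\theta\in\Theta^*$ (the paper's Lemma~\ref{lem:submtg}, which uses Jensen where you use Cauchy--Schwarz --- the two bounds are equivalent here). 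The only cosmetic difference is your treatment of $\theta\notin\Theta^*$ via the per-agent product-likelihood separation argument of Proposition 3 of~\cite{jadbabaie2012non} plus synchronization, where the paper instead reuses its already-derived limit $\sum_{\theta\in\Theta^*}\mu_t(\theta)\to\allone$; both are valid and the paper itself notes the Proposition-3 alternative.
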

}

 \blue{Before proving Theorems~\ref{thm:main} and~\ref{thm:usc}, we look at the effectiveness of the concepts of Section~\ref{sec:p_star} in analyzing the social learning dynamics studied in this paper. We begin by noting the following implication of Assumption~\ref{item:pstar}: there exists a deterministic sequence of stochastic vectors $\{\pi(t)\}_{t=0}^\infty$ and a constant $p^*>0$ such that $\{\pi(t)\}_{t=0}^\infty$ is an absolute probability sequence for  $\{A(t)\}_{t=0}^\infty$, and $\pi(t)\geq p^*\allone$ for all $t\in\N_0$.}
\subsection*{Using Absolute Probability Sequences and the Notion of Class $\pstar$ to Analyze Social Learning}

\blue{\noindent \textbf{\textit{1) Linear Approximation of the Update Rule: }} Consider the update rule~\eqref{eq:vector_form} with $\theta=\theta^*$. Note that the only non-linear term in this equation is 
$$
    u(t) := \text{diag}\left(\ldots,a_{ii}(t)\left[\frac{l_i(\omega_{i,t+1}|\theta^*)}{m_{i,t}(\omega_{i,t+1})}-1\right],\ldots\right)\mu_t(\theta^*).
$$
So, in case $\lim_{t\rightarrow \infty }u(t) = \allzero$, then the resulting dynamics for large $t$ would be $\mu_t(\theta^*) \approx A(t)\mu_t(\theta^*)$, which is approximately linear and hence easier to analyze. This motivates us to use the following trick: we could take the dot product of each side of~\eqref{eq:vector_form} with a non-vanishing positive vector $q(t)$, and then try to show that $q^T(t) u(t)\rightarrow 0$ as $t\rightarrow\infty$. Also, since $\{A(t)\}\in \pstar$, we could simply choose $\{q(t)\}_{t=0}^\infty=\{\pi(t)\}_{t=0}^\infty$ as our sequence of non-vanishing positive vectors.} 

\blue{Before using this trick, we need to take suitable conditional expectations on both sides of~\eqref{eq:vector_form} so as to remove all the randomness from $A(t)$ and $a_{ii}(t)$. To this end, we define  $\B_t:=\sigma(\omega_1, \ldots, \omega_t, A(0), \ldots, A(t))$ for each $t\in\N$, and obtain the following from~\eqref{eq:vector_form}:
\begin{align*}
    \E^*[\mu_{t+1}(\theta^*)\mid\B_t] - A(t)\mu_t(\theta^*)=\E^*[u(t)\mid\B_t],
\end{align*}}
\blue{where we used that $\mu_t(\theta^*)$ is measurable w.r.t. $\B_t$.} 

\blue{We now use the said trick as follows: we left-multiply both the sides of the above equation by the non-random vector $\pi^T(t+1)$ and obtain:
\begin{align*}
    \pi^T(t+1)\E^*[u(t)\mid \B_t] = \pi^T(t+1)\E^*[\mu_{t+1}(\theta^*) |\B_t] - \pi^T(t+1)A(t)\mu_t(\theta^*).
\end{align*}}
 \blue{Here, we use the definition of absolute probability sequences (Definition~\ref{def:abs_prob_seq}): we replace $\pi^T(t+1)$ with $\pi^T(t+2)\E^*[A(t+1)]$ in the first term on the right-hand-side. Consequently,
\begin{align}\label{eq:right_left}
    &\pi^T(t+1)\E^*[u(t)\mid \B_t] = \pi^T(t+2)\E^*[A(t+1)] \E^*[\mu_{t+1}(\theta^*) \mid \B_t] - \pi^T(t+1)A(t)\mu_t(\theta^*)\cr
    &\stackrel{(a)}{=}\E^*[\pi^T(t+2) A(t+1) \mu_{t+1}(\theta^*)\mid \B_t]- \pi^T(t+1)A(t)\mu_t(\theta^*),
\end{align}}
\blue{where (a) follows from Assumptions~\ref{item:indep} and~\ref{item:seq_indep} (for more details see Lemma~\ref{lem:was_not_a_lemma_earlier}).  Now, to prove that $\lim_{t\rightarrow \infty} u(t)=\allzero$, we could begin by showing that the left-hand-side of~\eqref{eq:right_left} (i.e., $\pi^T(t+1)\E^*[u(t)\mid \B_t]$) goes to $0$ as $t\rightarrow\infty$. As it turns out, this latter condition is already met: according to Lemma~\ref{lem:submtg} (in the appendix), the right-hand side of~\eqref{eq:right_left} vanishes as $t\rightarrow \infty$. As a result, 
$$
    \lim_{t\rightarrow \infty} \pi^T(t+1)\E^*[u(t)\mid \B_t ]=0\quad\textit{a.s.}
$$}
\blue{\indent Equivalently, }
\blue{\begin{gather*}
 \sum_{i=1}^n \pi_i(t+1) a_{ii}(t)\E^*\left[\frac{ l_i(\omega_{i,t+1}| \theta^*)  }{m_{i,t}(\omega_{i,t+1}) }-1\, \Big|\,\B_t \right]\mu_{i,t}(\theta^*)
 \longrightarrow 0\quad \text{almost surely as}\quad t\rightarrow\infty,
\end{gather*}}
\blue{where we have used that $a_{ii}(t)$ and $\mu_t(\theta^*)$ are measurable w.r.t. $\B_t$. To remove the summation from the above limit, we use the lower bound in Lemma~\ref{lem:key} to argue that every summand in the above expression is non-negative. Thus, for each $i\in [n]$,}
\blue{$$
 \lim_{t\rightarrow \infty} \pi_i(t+1) a_{ii}(t)\E^*\left[\frac{ l_i(\omega_{i,t+1}| \theta^*)  }{m_{i,t}(\omega_{i,t+1}) }-1\, \Big|\,\B_t \right]\mu_{i,t}(\theta^*) = 0
$$
 \textit{a.s.} More compactly, $\lim_{t\rightarrow \infty}\pi_i(t+1)\E^*[u_i(t) \mid \B_t] =0$ \textit{a.s.} Here, Class $\pstar$ plays an important role: since $\pi(t+1)\geq p^*\allone$, the multiplicand $\pi_i(t+1)$ can be omitted:
\begin{gather}\label{eq:expectation_u}
     \lim_{t\rightarrow\infty}\E^*\left[u_{i}(t)\mid \B_t \right] = 0\quad\textit{a.s.}
\end{gather}}
\blue{We have thus shown that $ \lim_{t\rightarrow\infty} \E^*[ u(t) \mid \B_t]=\mathbf 0$ \textit{a.s}. With the help of some algebraic manipulation, we can now show that $\lim_{t\rightarrow\infty} u(t)=\mathbf 0$ \textit{a.s.} (see Lemma~\ref{lem:strongest} for further details).\\\\ } 
\blue{\noindent \textbf{\textit{2) Analysis of 1-Step-Ahead Forecasts: }} Interestingly, the result $\lim_{t\rightarrow\infty} u(t) =\allzero $ \textit{a.s.} can be strengthened further to comment on 1-step-ahead forecasts, as we now show.}

\blue{Recall that $\pi(t)\geq p^*\allone$ for all $t\in\N_0$. Since  $\log(\mu_t(\theta^*) )\leq \mathbf 0$, this means that the following hold almost surely:
\begin{align*}
    p^*\liminf_{t\rightarrow\infty} \sum_{i=1}^n \log(\mu_{i,t}(\theta^*))&= \liminf_{t\rightarrow\infty} p^*\allone ^T \log(\mu_t(\theta^*))\cr
    &\geq \liminf_{t\rightarrow\infty} \pi^T\log(\mu_t(\theta^*))>-\infty,
\end{align*}}
\blue{where the last step follows from Lemma~\ref{lem:submtg}. This is possible only if $\liminf_{t\rightarrow\infty} \log(\mu_{i,t}(\theta^*)) > -\infty$ \textit{a.s.} for each $i\in [n]$, which implies that $\liminf_{t\rightarrow\infty} \mu_{i,t}(\theta^*)>0$ \textit{a.s.}, that is, there \textit{a.s.} exist random variables $\delta>0$ and $T'\in\N$ such that $\mu_{i,t}(\theta^*)\geq \delta$ \textit{a.s. }for all $t\geq T'$. Since $\lim_{t\rightarrow\infty} u_i(t) = 0$ \textit{a.s.}, it follows that $\lim_{t\rightarrow\infty} \frac{u_i(t)}{\mu_{i,t}(\theta^*)} = 0$  \textit{a.s.}, that is,
$$
   \lim_{t\rightarrow\infty} a_{ii}(t)\left(\frac{ l_i(\omega_{i,t+1} | \theta^*)  }{m_{i,t}(\omega_{i,t+1}) }-1 \right) = 0\quad\textit{a.s.}
$$
Multiplying the above limit by $-m_{i,t}(\omega_{i,t+1})$ yields $\lim_{t\rightarrow \infty} a_{ii}(t)\left ( m_{i,t}(\omega_{i,t+1}) - l_i(\omega_{i,t+1}|\theta^* )\right)  = 0$ \textit{a.s.} We now perform some simplification  (see Lemma~\ref{lem:replace_omega_with_s}) to show that 
\begin{align}\label{eq:one-step-ahead}
    \lim_{t\rightarrow \infty } a_{ii}(t)\left ( m_{i,t}(s ) - l_i(s |\theta^* )\right)  = 0 \quad \textit{a.s. \emph{for all} } s\in S_i.
\end{align}
Therefore, if there exists a sequence of times $\{t_k\}_{k=1}^\infty$ with $t_k\uparrow\infty$ such that the $i$-th agent's self-confidence $a_{ii}(t)$ exceeds a fixed threshold $\gamma>0$ at times $\{t_k\}_{k=1}^\infty$, then~\eqref{eq:one-step-ahead}  implies that its 1-step-ahead forecasts sampled at $\{t_k\}_{k=1}^\infty$ converge to the true forecasts, i.e., $\lim_{k\rightarrow\infty} m_{i,t_k}(s) = l_i(s | \theta^*)$ \textit{a.s}.} 

\blue{The following lemma generalizes~\eqref{eq:one-step-ahead} to $h$-step-ahead forecasts. Its proof is based on induction and elementary properties of conditional expectation.}

\begin{lemma}\label{lem:modified_forecast_limit}
For all $h\in\N$, $s_1,s_2,\ldots, s_h\in S_i$ and $i\in [n]$,
\begin{align*}
    \lim_{ t\rightarrow\infty} a_{ii}(t)\left(m_{i,t}^{(h)}(s_1,s_2,\ldots,s_h)-\prod_{r=1}^h l_i(s_r|\theta^*)\right) = 0 \quad\textit{ a.s.}
\end{align*}
\end{lemma}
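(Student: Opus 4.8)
The plan is to induct on $h$, taking the base case $h=1$ to be exactly~\eqref{eq:one-step-ahead}. Throughout I would fix an agent $i$ and work on the almost-sure event on which both $\lim_{t\to\infty}u(t)=\allzero$ and $\liminf_{t\to\infty}\mu_{i,t}(\theta^*)>0$ hold, so that there are a random $\delta>0$ and a random $T'\in\N$ with $\mu_{i,t}(\theta^*)\ge\delta$, and hence $m_{i,t}(s)\ge l_0\delta>0$, for all $t\ge T'$; this guarantees that every Bayesian update below is well defined and bounded. These two facts are exactly the per-agent consequences of Assumption~\ref{item:pstar} (Class $\pstar$) that the earlier analysis already extracted, and they are all I would need to feed into the order-$h$ step.

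For the inductive step I would first reduce the order-$h$ claim to a single ``cross term'' by telescoping. Writing $L_h:=\prod_{r=1}^h l_i(s_r|\theta^*)$ and $c_\theta:=\prod_{r=2}^h l_i(s_r|\theta)$, one has the identity
$$
m_{i,t}^{(h)}(s_1,\ldots,s_h)-L_h = E_t + l_i(s_1|\theta^*)\Big(m_{i,t}^{(h-1)}(s_2,\ldots,s_h)-\textstyle\prod_{r=2}^{h} l_i(s_r|\theta^*)\Big),
$$
where $E_t:=\sum_{\theta\in\Theta}\big(l_i(s_1|\theta)-l_i(s_1|\theta^*)\big)c_\theta\,\mu_{i,t}(\theta)$. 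Since $l_i(s_1|\theta^*)\le 1$, the second term multiplied by $a_{ii}(t)$ vanishes almost surely by the induction hypothesis applied to the $(h-1)$ signals $s_2,\ldots,s_h$. Hence the whole lemma comes down to proving that $a_{ii}(t)E_t\to 0$ almost surely.

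To control $a_{ii}(t)E_t$ I would re-run, at order $h$, the conditional-expectation argument that produced~\eqref{eq:one-step-ahead}. Concretely, I would reintroduce the random first argument $\omega_{i,t+1}$ and use the update rule~\eqref{eq:main} to rewrite $a_{ii}(t)\,m_{i,t}^{(h)}(\omega_{i,t+1},s_2,\ldots,s_h)/m_{i,t}(\omega_{i,t+1})$ as $\sum_{\theta}c_\theta\mu_{i,t+1}(\theta)$ minus the neighbors' weighted contributions $\sum_{j\ne i}a_{ij}(t)\sum_{\theta}c_\theta\mu_{j,t}(\theta)$. I would then take $\E^*[\,\cdot\mid\B_t]$, exploiting that $\omega_{t+1}$ is $\B_t$-independent with marginal $l_i(\cdot|\theta^*)$ (Assumptions~\ref{item:indep} and~\ref{item:seq_indep}), use a nonnegativity bound in the spirit of Lemma~\ref{lem:key} to pass from a summed limit to each summand, and finally strip the conditional expectation and replace $\omega_{i,t+1}$ by an arbitrary fixed $s_1\in S_i$ via the device of Lemma~\ref{lem:replace_omega_with_s}.

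The hard part will be the cross term $E_t$, and the difficulty is essential rather than bookkeeping: convergence of the forecasts of all orders up to $h-1$ does \emph{not} determine the order-$h$ forecast (low-order ``moments'' of the belief $\mu_{i,t}$ against the likelihood vectors do not pin down the high-order ones, since the type $l_i(\cdot|\theta^*)$ need not be an extreme point of the convex hull of the other types). Thus $a_{ii}(t)E_t\to 0$ cannot be deduced from the induction hypothesis by linear algebra alone, and the dynamics must genuinely be used again at order $h$. Reintroducing the dynamics is precisely what forces the appearance of the neighbors' beliefs weighted by agent $i$'s own likelihoods---a ``mixed'' quantity that is the forecast of no single agent---and the technical heart of the argument is to show that, after conditioning on $\B_t$ and invoking $\lim_t u(t)=\allzero$ together with $\mu_{i,t}(\theta^*)\ge\delta$, these mixed terms do not obstruct the vanishing of $a_{ii}(t)E_t$.
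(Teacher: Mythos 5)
Your overall architecture matches the paper's: induction on $h$ with~\eqref{eq:one-step-ahead} as the base case, a reduction that isolates a single new term at each order, and a return to the dynamics~\eqref{eq:main} plus conditioning on $\B_t$ to control that term. Your telescoping identity is correct, and your observation that the inductive hypothesis alone cannot close the argument---so that the dynamics must genuinely be re-used at order $h$---is exactly right. The problem is that you stop at precisely the point you yourself call ``the technical heart of the argument'': you never show that the mixed neighbor terms $\sum_{j\neq i}a_{ij}(t)\sum_{\theta}c_\theta\,\mu_{j,t}(\theta)$ do not obstruct the vanishing of $a_{ii}(t)E_t$. As written, the proposal is a correct reduction together with an accurate description of what remains to be proved, not a proof.

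The missing idea, which is the pivot of the paper's inductive step, is that these mixed terms are $\B_t$-measurable and therefore cancel \emph{identically} when one forms the martingale difference $\E^*[\mu_{i,t+1}(\theta)\mid\B_t]-\mu_{i,t+1}(\theta)$. Concretely, the paper rearranges~\eqref{eq:vector_form} using Lemma~\ref{lem:like_lemma_3} to obtain that $\E^*[\mu_{i,t+1}(\theta)\mid\B_t]-\mu_{i,t+1}(\theta)+a_{ii}(t)\bigl[\tfrac{l_i(\omega_{i,t+1}|\theta)}{m_{i,t}(\omega_{i,t+1})}-1\bigr]\mu_{i,t}(\theta)\to 0$ almost surely, an expression from which the neighbor contributions have already disappeared; it then multiplies by $a_{ii}(t)\prod_{r=1}^{h}l_i(s_r|\theta)$ and sums over $\theta$, so that the martingale-difference piece becomes $a_{ii}(t)\bigl(\E^*[m_{i,t+1}^{(h)}\mid\B_t]-m_{i,t+1}^{(h)}\bigr)$, which is killed by the inductive hypothesis together with the Dominated Convergence Theorem for conditional expectations (this is~\eqref{eq:aux_lim_2}). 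Two further points your sketch does not anticipate: (i) the extra factor of $a_{ii}(t)$ required to invoke the inductive hypothesis leaves $a_{ii}^2(t)$ in front of the order-$(h+1)$ forecast discrepancy, so a final step is needed to come back down to a single power of $a_{ii}(t)$ (the paper squares the target expression and uses boundedness of the forecast error); and (ii) after taking $\E^*[\,\cdot\mid\B_t]$ to replace $\omega_{i,t+1}$ by a sum over $s\in S_i$ weighted by $l_i(s|\theta^*)$, passing from the sum to an individual $s$ requires the absolute-value device of Lemma~\ref{lem:replace_omega_with_s}, since the summands are not sign-definite. Your plan names the right tools, but without the cancellation step the argument does not close.
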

\begin{proof}
We prove this lemma by induction. Observe that since $m_{i,t}(s)\leq 1$ for all $s\in S_i$ and $i\in [n]$, on multiplication by $m_{i,t}(s)$, Lemma~\ref{lem:strongest} implies that~Lemma~\ref{lem:modified_forecast_limit} holds for $h=1$. Now, suppose~Lemma~\ref{lem:modified_forecast_limit} holds for some $h\in\N$, and subtract both the sides of~\eqref{eq:vector_form} from $\E^*[\mu_{t+1}(\theta)|\B_t]$ in order to obtain:
\begin{align*}
    &\E^*[\mu_{t+1}(\theta)|\B_t]-\mu_{t+1}(\theta)=\E^*[\mu_{t+1}(\theta)|\B_t]-A(t)\mu_t(\theta)-\text{diag}\left(\ldots,a_{ii}(t)\left[\frac{l_i(\omega_{i,t+1}|\theta)}{m_{i,t}(\omega_{i,t+1})}-1\right],\ldots\right)\mu_t(\theta).
\end{align*}
Rearranging the above and using~Lemma~\ref{lem:like_lemma_3} results in:
\begin{align*}
    \E^*[\mu_{t+1}(\theta)|\B_t]-\mu_{t+1}(\theta)+\text{diag}\left(\ldots,a_{ii}(t)\left[\frac{l_i(\omega_{i,t+1}|\theta)}{m_{i,t}(\omega_{i,t+1})}-1\right],\ldots\right)\mu_t(\theta)\stackrel{t\rightarrow\infty}{\longrightarrow}0
\end{align*}
\textit{a.s}. For $i\in[n]$, we now pick the $i$-th entry of the above vector limit, multiply both its sides by $a_{ii}(t)\prod_{r=1}^h l_i(s_r|\theta)$ (where $s_1, s_2,\ldots, s_r$ are chosen arbitrarily from $S_i$), and then sum over all $\theta\in\Theta$. As a result, the following quantity approaches 0 almost surely as $t\rightarrow\infty$:
\begin{gather}
    \sum_{\theta\in\Theta}a_{ii}(t)\left(\prod_{r=1}^h l_i(s_r|\theta)\right)\left(\E^*[\mu_{i,t+1}(\theta)|\B_t]-\mu_{i,t+1}(\theta)\right)+\sum_{\theta\in\Theta}\left(\prod_{r=1}^h l_i(s_r|\theta)\right)a_{ii}^2(t)\left[\frac{l_i(\omega_{i,t+1}|\theta)}{m_{i,t}(\omega_{i,t+1})}-1\right]\mu_{i,t}(\theta) \label{eq:aux_lim_1}
\end{gather}
On the other hand, the following holds almost surely:
\begin{align}\label{eq:aux_lim_2}
    &\sum_{\theta\in\Theta}a_{ii}(t)\left(\prod_{r=1}^h l_i(s_r|\theta)\right)\left(\E^*[\mu_{i,t+1}(\theta)|\B_t]-\mu_{i,t+1}(\theta)\right)\cr
    & = a_{ii}(t)\E^*\left[\sum_{\theta\in\Theta}\prod_{r=1}^h l_i(s_r|\theta)\mu_{i,t+1}(\theta)\mathrel{\Big|}\B_t\right]-a_{ii}(t)\sum_{\theta\in\Theta}\prod_{r=1}^h l_i(s_r|\theta)\mu_{i,t+1}(\theta)\cr
    &= a_{ii}(t)\E^*\left[ m_{i,t+1}^{(h)} (s_1,\ldots, s_r) |\B_t \right] -a_{ii}(t) m_{i,t+1}^{(h)}(s_1,\ldots, s_r)\cr
    &=\E^*\left[a_{ii}(t)\left(m_{i,t+1}^{(h)}(s_1,\ldots,s_r)-\prod_{r=1}^h l_i(s_r|\theta)\right)\mathrel{\Big|}\B_t\right]-a_{ii}(t)\left(m_{i,t+1}^{(h)}(s_1,\ldots,s_r)-\prod_{r=1}^h l_i(s_r|\theta)\right)\stackrel{t\rightarrow\infty}{\longrightarrow}0\nonumber\\
\end{align}
where the last step follows from our inductive hypothesis and the Dominated Convergence Theorem for conditional expectations. Combining \eqref{eq:aux_lim_1} and~\eqref{eq:aux_lim_2} now yields:
\begin{align*}
    \sum_{\theta\in\Theta}\left(\prod_{r=1}^h l_i(s_r|\theta)\right)a_{ii}^2(t)\left[\frac{l_i(\omega_{i,t+1}|\theta)}{m_{i,t}(\omega_{i,t+1})}-1\right]\mu_{i,t}(\theta)\rightarrow 0
\end{align*}
\textit{a.s.}  as $t\rightarrow\infty$, which implies:
\begin{align*}
    \frac{a_{ii}^2(t)}{m_{i,t}(\omega_{i,t+1})}m_{i,t}^{(h+1)}(\omega_{i,t+1},s_1,\ldots,s_h)- a_{ii}^2(t)m_{i,t}^{(h)}(s_1,\ldots, s_h) \rightarrow 0\quad \text{\textit{a.s.}  as}\quad t\rightarrow\infty.
\end{align*}
By the inductive hypothesis and the fact that $|a_{ii}(t)|$ and $|m_{i,t}(\omega_{i,t+1})|$ are bounded, the above means:
\begin{align*}
     a_{ii}^2(t)m_{i,t}^{(h+1)}(\omega_{i,t+1},s_1,\ldots,s_h)- a_{ii}^2(t)m_{i,t}(\omega_{i,t+1})\prod_{r=1}^h l_i(s_r|\theta^*)\stackrel{t\rightarrow\infty}{\longrightarrow}0\quad\textit{a.s.}
\end{align*}
Once again, the fact that $|m_{i,t}(\omega_{i,t+1})|$ is bounded along with~Lemma~\ref{lem:strongest} implies that $a_{ii}(t) m_{i,t}(\omega_{i,t+1})- a_{ii}(t)l_i(\omega_{i,t+1}|\theta^*)\rightarrow 0$ \textit{a.s.} and hence that:
\begin{align*}
     a_{ii}^2(t)m_{i,t}^{(h+1)}(\omega_{i,t+1},s_1,\ldots,s_h)- a_{ii}^2(t)l_i(\omega_{i,t+1}|\theta^*)\prod_{r=1}^h l_i(s_r|\theta^*)\stackrel{t\rightarrow\infty}{\longrightarrow}0\quad\textit{a.s.}
\end{align*}

By the Dominated Convergence Theorem for Conditional Expectations, we have
\begin{align*}
    a_{ii}^2(t) \E^*\left[ m_{i,t}^{(h+1)}(\omega_{i,t+1},s_1,\ldots,s_h)- l_i(\omega_{i,t+1}|\theta^*)\prod_{r=1}^h l_i(s_r|\theta^*)  |\B_t\right] \stackrel{t\rightarrow\infty}{\longrightarrow} 0\quad \textit{a.s.},
\end{align*}
which implies that
\begin{align*}
    a_{ii}^2(t) \sum_{s_{h+1}\in S_i} l_i(s_{h+1} | \theta^*) &\Bigg( m_{i,t}^{(h+1)}(s_{h+1} ,s_1,\ldots,s_h)- \prod_{r=1}^{h+1} l_i(s_r|\theta^*) \Bigg) \stackrel{t\rightarrow\infty}{\longrightarrow} 0\quad \textit{a.s.}.
\end{align*}
Since $l_i(s_{h+1}|\theta^*)>0$ for all $s_{h+1}\in S_i$, it follows that
\blue{$$
   a_{ii}^2(t)\left( m_{i,t}^{(h+1)}(s_{h+1}, s_1, \ldots, s_h) - \prod_{r=1}^{h+1} l_i(s_r|\theta^* )\right)\stackrel{t\rightarrow\infty}{\longrightarrow} 0 \quad\textit{a.s.} 
$$}
for all $s_1,s_2,\ldots, s_{h+1}\in S_i$. We thus have
\begin{align*}
    &\left[a_{i i}(t)\left(m_{i, t}^{(h+1)}\left(s_{1}, s_{2}, \ldots, s_{h+1}\right)-\prod_{r=1}^{h+1} l_{i}\left(s_{r} \mid \theta^{*}\right)\right)\right]^{2}\cr
    &=a_{i i}^2 (t) \left(m_{i, t}^{(h+1)}\left(s_{1}, s_{2}, \ldots, s_{h+1}\right)-\prod_{r=1}^{h+1} l_{i}\left(s_{r} \mid \theta^{*}\right)\right)\cdot \left(m_{i, t}^{(h+1)}\left(s_{1}, s_{2}, \ldots, s_{h+1}\right)-\prod_{r=1}^{h+1} l_{i}\left(s_{r} \mid \theta^{*}\right)\right),
\end{align*}
which decays to 0 almost surely as $t\rightarrow\infty$, because $\left|m_{i, t}^{(h+1)}\left(s_{1}, s_{2}, \ldots, s_{h+1}\right)-\prod_{r=1}^{h+1} l_{i}\left(s_{r} \mid \theta^{*}\right)\right|$ is bounded. This proves the lemma for $h+1$ and hence for all $h\in\N$.
\end{proof}
\noindent\\\\
\blue{\noindent\textit{\textbf{3) Asymptotic Behavior of the Agents' Beliefs:}} As it turns out, Lemma~\ref{lem:submtg}, which we used above to analyze the asymptotic behavior of $u(t)$, is a useful result based on the idea of absolute probability sequences. We prove this lemma below. }
\begin{lemma}\label{lem:submtg} Let $\theta\in\Theta^*.$ Then the following limits exist \blue{and are finite}: $\P^*$-{\textit{a.s}}: $\lim_{t\rightarrow\infty}\pi^T(t)\mu_{t}(\theta)$, $\lim_{t\rightarrow\infty}\pi^T(t+1)A(t)\mu_{t}(\theta)$ and $\lim_{t\rightarrow\infty}\pi^T(t)\log \mu_t(\theta^*)$. \blue{As a result, $ \E^*[ \pi^T(t+2)A(t+1)\mu_{t+1}(\theta^*)\mid \B_t]-\pi^T(t+1)A(t)\mu_t(\theta^*)$ approaches 0 \textit{a.s}. as $t\rightarrow\infty$. }
\end{lemma}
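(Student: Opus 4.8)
The plan is to realize each of the three sequences as a bounded $(\text{sub}/\text{super})$martingale for a suitable filtration, invoke the martingale convergence theorem, and then read off the closing assertion as a statement about compensator increments. Throughout, write $v(t)$ for the right-hand side of~\eqref{eq:vector_form} with a fixed $\theta\in\Theta^*$, so that $\mu_{t+1}(\theta)=A(t)\mu_t(\theta)+v(t)$, and introduce the auxiliary filtration $\mathcal H_t:=\sigma(\omega_1,\dots,\omega_t,A(0),\dots,A(t-1))$, which sits between $\B_{t-1}$ and $\B_t$, makes $\mu_t(\theta)$ measurable, and (by Assumptions~\ref{item:indep} and~\ref{item:seq_indep}) keeps $A(t)$ independent of it. I would treat the first two limits together: set $X_t:=\pi^T(t)\mu_t(\theta)$ and $Z_t:=\pi^T(t+1)A(t)\mu_t(\theta)$, both lying in $[0,1]$ since $\pi(t)$ and the rows of $A(t)$ are stochastic. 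Using independence of $A(t)$ from $\mathcal H_t$ and the defining identity $\pi^T(t+1)\E^*[A(t)]=\pi^T(t)$ (Definition~\ref{def:abs_prob_seq}), one gets $\E^*[Z_t\mid\mathcal H_t]=\pi^T(t+1)\E^*[A(t)]\mu_t(\theta)=X_t$; and since $A(t)\mu_t(\theta)$ is $\B_t$-measurable, $\E^*[X_{t+1}\mid\B_t]=Z_t+\pi^T(t+1)\E^*[v(t)\mid\B_t]$, where $\E^*[v(t)\mid\B_t]\ge\mathbf 0$ by the lower bound of Lemma~\ref{lem:key} (equivalently, a Cauchy--Schwarz estimate giving $\E^*[l_i(\omega_{i,t+1}|\theta)/m_{i,t}(\omega_{i,t+1})\mid\B_t]\ge 1$ for $\theta\in\Theta^*$). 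Hence the interleaved sequence $X_0,Z_0,X_1,Z_1,\dots$ is a bounded submartingale for the filtration $\mathcal H_0,\B_0,\mathcal H_1,\B_1,\dots$, so it converges a.s.\ to a single finite limit, and with it both $X_t$ and $Z_t$; this is the first and second claim.

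Next I would handle $L_t:=\pi^T(t)\log\mu_t(\theta^*)$. Applying~\eqref{eq:main} and concavity of the logarithm (Jensen, using that the rows of $A(t)$ are stochastic) gives, entrywise, $\log\mu_{t+1}(\theta^*)\ge A(t)\log\mu_t(\theta^*)+c(t)$, where $c_i(t):=a_{ii}(t)\log\big(l_i(\omega_{i,t+1}|\theta^*)/m_{i,t}(\omega_{i,t+1})\big)$. Left-multiplying by $\pi^T(t+1)\ge\mathbf 0$ and taking $\E^*[\cdot\mid\mathcal H_t]$, the linear term again collapses to $L_t$ via the absolute-probability identity, while $\E^*[c_i(t)\mid\mathcal H_t]=\E^*[a_{ii}(t)]\,D\!\left(l_i(\cdot|\theta^*)\,\|\,m_{i,t}(\cdot)\right)\ge 0$ is an expected Kullback--Leibler divergence. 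Thus $L_t$ is a submartingale, and since $\log\mu_{i,t}(\theta^*)\le 0$ it is bounded above by $0$; equivalently $-L_t\ge 0$ is a supermartingale and converges a.s.\ in $[0,\infty]$.

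The crux is to exclude the limit $-\infty$, and this is where Assumptions~\ref{item:positive_prior} and~\ref{item:initial_connectivity} enter. Since $l_i(\cdot|\theta^*)\ge l_0$ and $m_{i,t}\le 1$, the Bayesian update obeys $\mathrm{BU}_{i,t+1}(\theta^*)\ge l_0\,\mu_{i,t}(\theta^*)$, whence $\mu_{t+1}(\theta^*)\ge l_0\,A(t)\mu_t(\theta^*)$ entrywise and, iterating, $\mu_t(\theta^*)\ge l_0^{\,t}A(t:0)\mu_0(\theta^*)\ge l_0^{\,t}\big(A(t:0)\big)_{i1}\mu_{1,0}(\theta^*)$. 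At the a.s.\ finite random time $T$ of Assumption~\ref{item:initial_connectivity} this forces $\mu_{i,T}(\theta^*)>0$ for every $i$ (indeed $\E^*[\log(A(T:0))_{i1}]>-\infty$ excludes $(A(T:0))_{i1}=0$), so $-L_T<\infty$ a.s. Conditioning on $\mathcal H_T$ (taking $T$, as we may, to be a stopping time of $\{\mathcal H_t\}$) renders $-L_T$ a finite constant, and nonnegative-supermartingale optional sampling gives $\E^*[-L_{T+s}\mid\mathcal H_T]\le -L_T<\infty$ for all $s\ge 0$; the nonnegative supermartingale convergence theorem, applied under the regular conditional law given $\mathcal H_T$, then yields an a.s.\ finite limit for $-L_{T+s}$, hence for $L_t$. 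I expect this finiteness-at-a-random-time step, rather than the algebra, to be the main obstacle: the pathwise bound carries the vanishing factor $l_0^{\,t}$, so a single unconditional expectation at a deterministic time may be $-\infty$, and the argument must be localized at $T$ through conditioning.

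Finally, the closing assertion is a corollary of the interleaved submartingale (with $\theta=\theta^*$). Since $\E^*[Z_{t+1}\mid\mathcal H_{t+1}]=X_{t+1}$ and $\B_t\subset\mathcal H_{t+1}$, the tower property gives $\E^*[\pi^T(t+2)A(t+1)\mu_{t+1}(\theta^*)\mid\B_t]=\E^*[Z_{t+1}\mid\B_t]=\E^*[X_{t+1}\mid\B_t]$, so the displayed difference equals the compensator increment $\E^*[X_{t+1}\mid\B_t]-Z_t\ge 0$ of the bounded submartingale. A bounded submartingale has a summable compensator, so its increments tend to $0$ a.s., which is precisely the claim.
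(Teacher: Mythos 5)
Your proof is correct and rests on the same pillars as the paper's: all three limits are obtained from bounded (or bounded-above) submartingales built from the absolute-probability identity $\pi^T(t+1)\E^*[A(t)]=\pi^T(t)$ and the nonnegativity of $\E^*[l_i(\omega_{i,t+1}|\theta)/m_{i,t}(\omega_{i,t+1})-1\mid\B_t]$ for $\theta\in\Theta^*$ (Lemma~\ref{lem:key}), with Jensen plus the KL divergence handling the logarithmic term. Two of your choices genuinely differ in execution. First, the paper treats $\pi^T(t)\mu_t(\theta)$ and $\pi^T(t+1)A(t)\mu_t(\theta)$ as two separate submartingales with respect to the filtrations $\{\B'_t\}$ and $\{\B_t\}$ respectively; your interleaving of $X_t$ and $Z_t$ into a single bounded submartingale along $\mathcal H_0\subset\B_0\subset\mathcal H_1\subset\cdots$ is tidier and yields for free that the two limits coincide, which the paper never records. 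Second, for the closing assertion the paper argues that $Z_{t+1}-Z_t\to 0$ a.s.\ (since $Z_t$ converges) and then applies the conditional dominated convergence theorem, whereas you identify $\E^*[X_{t+1}\mid\B_t]-Z_t$ as a nonnegative compensator increment of a bounded submartingale and invoke summability of the compensator; both are valid, and yours avoids the DCT step. On the third limit you are more careful than the paper about integrability at the random time $T$ — the paper simply declares $\{\pi^T(t)\log\mu_t(\theta^*)\}_{t\ge T}$ integrable and $L^1$-bounded — but note that your fix quietly upgrades $T$ to a stopping time of $\{\mathcal H_t\}$, which Assumption~\ref{item:initial_connectivity} does not literally grant; since the paper's own treatment of this point is no more rigorous, this is a refinement rather than a defect, but the parenthetical ``as we may'' deserves a sentence of justification if you keep that route.
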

\begin{proof}
    \blue{Let $\B'_t:=\sigma(A(0), \ldots, A(t-1), \omega_1, \ldots, \omega_t)$ for all $t\in\N$.  Taking the conditional expectation $\E[\cdot|\B'_t]$ on both sides of~\eqref{eq:vector_form} yields:
    \begin{equation}\label{eq:unique_to_arxiv}
        \E^*[\mu_{t+1}(\theta) \mid \B'_t] - \E^*[ A(t) \mid \B'_t ] \mu_t(\theta) =  \text{diag} \left(\ldots,\E^*\left[ a_{ii}(t) \left( \frac{ l_i(\omega_{i,t+1}\mid \theta) }{ m_{i,t}(\omega_{i,t+1}) }  -1 \right)\Big\lvert\B'_t\right],\ldots \right)\mu_t(\theta^*),
    \end{equation}
    where we used that $\mu_t(\theta)$ is measurable w.r.t. $\B'_t$. Now, observe that $\B'_t\subset\B_t$, which implies that
    \begin{align*}
        &\E^*\left[ a_{ii}(t) \left( \frac{ l_i(\omega_{i,t+1}\mid \theta) }{ m_{i,t}(\omega_{i,t+1}) }  -1 \right)\mathrel{\Big|}\B'_t\right]\cr
        &=\E^*\left[ \E^*\left[ a_{ii}(t) \left( \frac{ l_i(\omega_{i,t+1}\mid \theta) }{ m_{i,t}(\omega_{i,t+1}) }  -1 \right)\mathrel{\Big|}\B_t\right]  \mathrel{\Big|}\B'_t\right]\cr
        &=\E^*\left[a_{ii}(t)\cdot\E^*\left[\frac{l_i(\omega_{i,t+1}|\theta)}{m_{i,t}(\omega_{i,t+1})}-1\mathrel{\Big|} \B_t\right]  \mathrel{\Big|} \B'_t\right]\cr
        &\geq 0,
    \end{align*}
    where the inequality follows from the lower bound in Lemma~\ref{lem:key}. Hence,~\eqref{eq:unique_to_arxiv} implies that $\E^*[\mu_{t+1}(\theta)\mid \B'_t]\geq \E^*[A(t)\mid \B'_t]\mu_t(\theta)$. Since $\E^*[A(t)\mid \B'_t]= \E^*[A(t)]$ by Assumptions~\ref{item:indep} and~\ref{item:seq_indep}, it follows that  
    \begin{align}\label{eq:edit}
        \E^*[\mu_{t+1}(\theta)\mid \B'_t]\geq \E^*[A(t)]\mu_t(\theta)
    \end{align}
 \textit{a.s.} for all $t\in\N$.
 Left-multiplying both the sides of~\eqref{eq:edit} by $\pi^T(t+1)$ results in the following almost surely:
 \begin{align}\label{eq:almost_last}
     \pi^T(t+1)\E^*[\mu_{t+1}(\theta)|\B'_t]&\geq\pi^T(t+1) \E^*[A(t)]\mu_t(\theta)\cr
     &= \pi^T (t)\mu_t(\theta),
 \end{align}
 where the last step follows from the definition of absolute probability sequences (Definition~\ref{def:abs_prob_seq}). Since $\{\pi(t)\}_{t=0}^\infty$ is a deterministic sequence, it follows from~\eqref{eq:almost_last} that 
 $$
  \E^*[\pi^T(t+1)\mu_{t+1}(\theta)\mid \B'_t ]\geq\pi^T(t)\mu_t(\theta) \quad\textit{a.s.}
 $$
 We have thus shown that $\{\pi^T(t)\mu_t(\theta)\}_{t=1}^\infty$ is a submartingale w.r.t. the filtration $\{\B'_t\}_{t=1}^\infty$. Since it is also a bounded non-negative sequence (because $\mathbf 0\leq \pi(t), \mu_t(\theta)\leq \mathbf1$), it follows that $\{\pi^T(t)\mu_t(\theta)\}_{t=1}^\infty$ is a bounded non-negative submartingale. Hence, $\lim_{t\rightarrow\infty}{\pi^T(t)}\mu_t(\theta)$ exists and is finite $\P^*$-\textit{a.s}.\\
 \indent The almost-sure existence of $\lim_{t\rightarrow\infty}\pi^T(t+1)A(t)\mu_t(\theta)$ and $\lim_{t\rightarrow\infty}\pi^T(t)\log\mu_t(\theta^*) $ can be proved using similar submartingale arguments, as we show below.} 
 
 In the case of $\lim_{t\rightarrow\infty}\pi^T(t+1)A(t)\mu_t(\theta)$, we derive an inequality similar to~\eqref{eq:edit}: we take conditional expectations on both sides of~\eqref{eq:vector_form} and then use the lower bound in Lemma~\ref{lem:key} to establish that
 \begin{equation}\label{eq:similar_to_nine}
     \E^*[\mu_{t+1}(\theta)\mid \B_t] \geq A(t)\mu_t(\theta) \quad\textit{a.s.}
 \end{equation}
 Next, we observe that
\begin{align}\label{eq:edit_2}
    \pi^T(t+1)&A(t)\mu_t(\theta)\cr &\stackrel{(a)}{\leq} \pi^T(t+1)\E^*[\mu_{t+1}(\theta)|\B_t]\cr
    &= \pi^T(t+2)\E^*[A(t+1)]\E^*[\mu_{t+1}(\theta)|\B_t]\cr
    &\stackrel{(b)}{=} \pi^T(t+2)\E^*[A(t+1)|\B_t]\E^*[\mu_{t+1}(\theta)|\B_t]\cr
    &\stackrel{(c)}{=} \pi^T(t+2)\E^*[A(t+1)\mu_{t+1}(\theta)|\B_t]\cr
    &=\E^*[\pi^T(t+2)A(t+1)\mu_{t+1}(\theta)|\B_t]\quad\textit{a.s.},
\end{align}
where (a) follows from~\eqref{eq:similar_to_nine}, and (b) and (c) each follow from Assumptions~\ref{item:indep} and~\ref{item:seq_indep}. Thus, $\{\pi^T(t+1)A(t)\mu_t(\theta)\}_{t=1}^\infty$ is a submartingale. It is also a bounded sequence. Hence, $\lim_{t\rightarrow\infty}\pi^T(t+1)A(t)\mu_t(\theta)$ exists and is finite \textit{a.s.} Next, we use an argument similar to the proof of Lemma 2 in~\cite{jadbabaie2012non}: by taking the entry-wise logarithm of both  sides of~\eqref{eq:main}, using the concavity of the $\log(\cdot)$ function and then by using Jensen's inequality, we arrive at:
\begin{align}\label{eq:log_ineq}
    \log\mu_{i,t+1}(\theta^*)&\geq a_{ii}(t)\log\mu_{i,t}(\theta^*)+a_{ii}(t)\log\left(\frac{l_i(\omega_{i,t+1}|\theta^*)}{m_{i,t}(\omega_{i,t+1})}\right)+\sum_{j\in\mathcal N_i(t)}a_{ij}(t)\log\mu_{j,t}(\theta^*).
\end{align}
Note that by Lemma~\ref{lem:influence_relation} and Assumptions~\ref{item:positive_prior} and~\ref{item:initial_connectivity}, we have the following almost surely for all $i\in [n]$:
$$
    \mu_{i,T}(\theta^*)\geq (A(T:0))_{i1}(l_0/n)^T n\mu_{1,0}(\theta^*)>0.  
$$
 Therefore, \eqref{eq:log_ineq} is well defined for all $t\geq T$ and $i\in[n]$. Next, for each $i\in[n]$, we have:
\begin{align}\label{eq:exp_log_ineq}
    &\E^*\left[\log\frac{l_i(\omega_{i,t+1}|\theta^*)}{m_{i,t}(\omega_{i,t+1})}\mathrel{\Big|}\B'_t\right]=\sum_{s\in S_i}l_i(s|\theta^*)\log\left(\frac{l_i(s|\theta^*)}{m_{i,t}(s)}\right) =\infdiv{l_i(\cdot|\theta^*)}{m_{i,t}(\cdot)} \geq 0,
\end{align}
where $\infdiv{p}{q}$ denotes the relative entropy between two probability distributions $p$ and $q$, and is always non-negative~\cite{cover2012elements}. Taking conditional expectations on both the sides of~\eqref{eq:log_ineq} and then using~\eqref{eq:exp_log_ineq} yields:
\begin{align}
    \E^*[\log\mu_{i,t+1}(\theta^*)\mid\B'_t]&\geq \E^*[a_{ii}(t)]\log\mu_{i,t}(\theta^*)+\sum_{j\in\mathcal N_i(t)}\E^*[a_{ij}(t)]\log\mu_{j,t}(\theta^*)
\end{align}
\textit{a.s. }for all $i\in [n]$ and $t$ sufficiently large, which can also be expressed as $\E^*[\log\mu_{t+1}(\theta^*)\mid\B'_t]\geq \E^*[A(t)]\log\mu_t(\theta^*)$ \textit{a.s.} Therefore, $\E^*[A(t)]\log\mu_t(\theta^*)\leq\E^*[\log\mu_{t+1}(\theta^*)\mid\B'_t]$ \textit{a.s.}, and we have:
\begin{align}\label{eq:submtg_log}
    \pi^T(t)\log\mu_t(\theta^*)&=\pi^T({t+1})\E^*[A(t)]\log\mu_t(\theta^*)\cr
    &\leq \pi^T(t+1)\E^*[\log\mu_{t+1}(\theta^*)\mid\B'_t]\cr
    &=\E^*[\pi^T(t+1)\log\mu_{t+1}(\theta^*)\mid\B'_t]
\end{align}
\textit{a.s.} Thus, $\{\pi^T(t)\log{\mu_t(\theta^*)}\}_{t=0}^\infty$ is a submartingale. Now, recall that the following holds almost surely:
$$
\mu_{i,T}(\theta^*)\geq (A(T:0))_{i1} \left(\frac{l_0 }{n}\right)^{T} n \mu_{1,0}(\theta^*)>0,
$$
which, along with~\eqref{eq:submtg_log}, implies that $\{\pi^T(t)\log{\mu_t(\theta^*)}\}_{t= T}^{\infty}$ is an integrable process. Since $\pi^T(t)\log{\mu_t(\theta^*)}<0$ \textit{a.s.}, it follows that the submartingale is also $L^1(\mathbb P^*)$-bounded. Hence, $\lim_{t\rightarrow\infty}\pi^T(t)\log\mu_t(\theta^*)$ exists  \blue{and is finite} almost surely. 

\blue{Having shown that $\lim_{t\rightarrow\infty}\pi^T(t+1)A(t)\mu_t(\theta^*)$ exists \textit{a.s.}, we use the Dominated Convergence Theorem for Conditional Expectations (Theorem 5.5.9 in~\cite{durrett2019probability}) to prove the last assertion of the lemma. We do this as follows: note that $\lim_{t\rightarrow\infty} \left(\pi^T(t+2)A(t+1)\mu_{t+1}(\theta^*) - \pi^T(t+1)A(t)\mu_t(\theta^*) \right)=0$ \textit{a.s.} Therefore,
 \begin{align*}
     &\E^*[\pi^T(t+2)A(t+1)\mu_{t+1}(\theta^*)\mid \B_t] - \pi^T(t+1)A(t)\mu_{t}(\theta^*)\cr
     &=\E^*[\pi^T(t+2)A(t+1)\mu_{t+1}(\theta^*) - \pi^T(t+1)A(t)\mu_{t}(\theta^*)\mid \B_t ]\rightarrow 0\quad\text{almost surely as}\quad t\rightarrow\infty,
 \end{align*}
 where the second step follows from the Dominated Convergence Theorem for Conditional Expectations.}
\end{proof}

\blue{We now use the above observations to prove Theorems~\ref{thm:main} and~\ref{thm:usc}.}  
\subsection*{Proof of Theorem~\ref{thm:main}}

We prove each assertion of the theorem one by one.

\textit{Proof of~\eqref{item:weak_merge}:}
    If $\{A(t)\}_{t=0}^\infty$ has the strong feedback property, then by Lemma~\ref{lem:modified_forecast_limit}, for all $s\in S_i$, $h\in\N$ and $i\in [n]$,
    $$
        m_{i,t}^{(h)}(s_1,s_2,\ldots,s_h)-\prod_{r=1}^h l_i(s_r|\theta^*)\stackrel{t\rightarrow\infty}{\longrightarrow} 0\quad \textit{a.s.}
    $$
    which proves~\eqref{item:weak_merge}.
    
    \blue{So, let us now ignore the strong feedback property and suppose that  Assumption~\ref{item:gamma_epoch} holds. Let $D_k$ denote the event that $[t_{2k-1},t_{2k}]$ is a $\gamma$-epoch. Since $\{A(t)\}_{t=0}^\infty$ are independent, and since $\sum_{k=1}^\infty\Pr(D_k)=\infty$, we know from the Second Borel-Cantelli Lemma that $\Pr(D_k\text{ infinitely often})=1$ \textit{a.s.} In other words, infinitely many $\gamma$-epochs occur \textit{a.s.} So, for each $k\in\N$, suppose the $k$-th $\gamma$-epoch is the random time interval $[T_{2k-1},T_{2k}]$. } Then by the definition of $\gamma$-epoch, for each $k\in\N$ and $i\in[n]$, there almost surely exist $r_{i,k}\in[n]$, an observationally self-sufficient set, $\{\sigma_{i,k}(1),\ldots,\sigma_{i,k}(r_{i,k}) \}\subset [n]$, and times $\{\tau_{i,k}(1),\ldots,\tau_{i,k}(r_{i,k}) \}\subset\{T_{2k-1},\ldots, T_{2k}\}$ such that 
    $$
        \min\left(a_{\sigma_{i,k}(q)\,\sigma_{i,k}(q)}(\tau_{i,k}(q)), (A(\tau_{i,k}(q):T_{2k-1} ))_{\sigma_{i,k}(q)\, i} \right)\geq\gamma
    $$
    \textit{a.s.} for all $q\in[r_i{(k)}].$
    Since $n$ is finite, there exist constants $r_1, r_2, \ldots, r_n\in [n]$ and a constant set of tuples $\{(\sigma_i(1),\ldots, \sigma_i(r_i))\}_{i\in [n]}$ such that $r_{i,k}=r_i$ and $(\sigma_{i,k}(1),\ldots,\sigma_{i,k}(r_{i,k}))=(\sigma_{i}(1),\ldots,\sigma_{i}(r_{i}))$ hold for all $i\in [n]$ and infinitely many $k\in \N$. Thus, we may assume that the same equalities hold for all $i\in [n]$ and all $k\in \N$ (by passing to an appropriate subsequence of $\{T_k\}_{k=1}^\infty$, if necessary).  Hence, by Lemma~\ref{lem:modified_forecast_limit} \blue{ and the fact that $a_{\sigma_i(q)\,\sigma_i(q)}\geq \gamma$, we have}
    $$
        m_{\sigma_i(q),\,\tau_{i,k}(q)}^{(h)}(s_1,\ldots,s_h)\rightarrow\prod_{p=1}^h l_{\sigma_i(q)}(s_p|\theta^*)
    $$
    \textit{a.s.}  for all $s\in [r]$ as $k\rightarrow\infty$, which means that the forecasts of each agent in $\{\sigma_i(q):q\in [r_i]\}$ are asymptotically accurate along a sequence of times. \blue{Now, making accurate forecasts is possible only if agent $\sigma_i(q)$ rules out every state that induces on $S_{\sigma_i(q)}$ (the agent's signal space) a conditional probability distribution  other than $\l_{\sigma_i(q)} (\cdot| \theta^*)$. Such states are contained in $\Theta\setminus\Theta^*_{\sigma_i(q)}$. Thus, for every state $\theta\notin\Theta_{\sigma_i (q)}^*$, we have $\mu_{\sigma_i(q),\,\,\tau_{i,k}(q)}(\theta)\rightarrow 0$ \textit{a.s.}  as $k\rightarrow\infty$ (alternatively, we may repeat the arguments used in the proof of Proposition 3 of~\cite{jadbabaie2012non} to prove that $\mu_{\sigma_i(q),\,\,\tau_{i,k}(q)}(\theta)\rightarrow 0$ \textit{a.s.}  as $k\rightarrow\infty$).}
    
    \blue{On the other hand, since the influence of agent $i$ on agent $\sigma_i(q)$ over the time interval $[T_{2k-1}, \tau_{i,k}(q)]$ exceeds $\gamma$, it follows from Lemma~\ref{lem:influence_relation} that $\mu_{\sigma_i(q),\,\,\tau_{i,k}(q)}(\theta)$ is lower bounded by a multiple of $\mu_{i,T_{2k-1}}(\theta)$. To elaborate, }  Lemma~\ref{lem:influence_relation} implies that for all $\theta\in\Theta\setminus\Theta_{\sigma_i(q)}^*$:
    \begin{align*}
        \mu_{i,T_{2k-1}}(\theta)&\leq \frac{\mu_{\sigma_i(q),\,\tau_{i,k}(q)}(\theta)}{(A(\tau_{i,k}(q):T_{2k-1 }))_{\sigma_i(q)\,i} }\cdot\left(\frac{n}{l_0}\right)^{\tau_{i,k}(q)-T_{2k-1}}\cr
        &\leq \frac{\mu_{\sigma_i(q),\,\tau_{i,k}(q)}(\theta)}{\gamma}\left(\frac{n}{l_0}\right)^B.
    \end{align*}
    \blue{Considering the limit $\mu_{\sigma_i(q),\,\,\tau_{i,k}(q)}(\theta)\rightarrow 0$, this is possible only if $\lim_{k\rightarrow\infty} \mu_{i,T_{2k-1}}(\theta)=0$ \textit{a.s.} for all $\theta\in \Theta\setminus \Theta^*_{\sigma_i(q)}$ and $q\in[r_i]$, i.e., $\lim_{k\rightarrow\infty} \mu_{i,T_{2k-1}}(\theta)=0$ \textit{a.s.} for all $\theta\in \cup_{q\in[r_i] } \left( \Theta\setminus \Theta^*_{\sigma_i(q)}\right) $. Since $\{\sigma_i(q): q\in [r_i] \}$ is an observationally self-sufficient set, it follows that $\cup_{q\in[r_i] } \left( \Theta\setminus \Theta^*_{\sigma_i(q)}\right) = \Theta\setminus \Theta^*$ and hence that $\lim_{k\rightarrow\infty} \mu_{i,T_{2k-1}}(\theta)=0$ \textit{a.s.} for all $\theta\in \Theta\setminus \Theta^*$. Since $i\in [n]$ is arbitrary, this further implies that $\lim_{k\rightarrow\infty}\mu_{T_{2k-1}}(\theta) = \mathbf 0$ for all $\theta\notin\Theta^*$.  Hence, $\lim_{k\rightarrow\infty} \sum_{\theta\in\Theta^*} \mu_{T_{2k-1}}(\theta) =\allone $ \textit{a.s.}}
    
    \blue{ To convert the above subsequence limit to a limit of the sequence $\{\sum_{\theta\in\Theta^*} \mu_t(\theta)\}_{t=0}^\infty$, we  first show the existence of $\lim_{t\rightarrow\infty}\pi^T(t)\sum_{\theta\in\Theta^*} \mu_t(\theta)$ and use it to prove that 
    \begin{align}\label{eq:last_of_first_part}
        \lim_{t\rightarrow\infty} \sum_{\theta\in\Theta^*}\mu_{t}(\theta) = \allone \quad\textit{a.s.}
    \end{align}
     This is done as follows. First, we note that $
    \lim_{t\rightarrow\infty}\pi^T(t) \sum_{\theta\in\Theta^*}\mu_t(\theta)$ exists \textit{a.s.} because 
    $$
        \lim_{t\rightarrow\infty}\pi^T(t) \sum_{\theta\in\Theta^*}\mu_{t}(\theta) = \sum_{\theta\in\Theta^*}\lim_{t\rightarrow\infty}\pi^T(t) \mu_{t}(\theta),
    $$ 
    which is a sum of limits that exist \textit{a.s.} by virtue of Lemma~\ref{lem:submtg}. On the other hand, since $\lim_{k\rightarrow\infty} \sum_{\theta\in\Theta^*} \mu_{T_{2k-1}}(\theta) =\allone $ \textit{a.s.}, we have } $
\lim_{k\rightarrow\infty}\pi^T(T_{2k-1}) \sum_{\theta\in\Theta^*}\mu_{T_{2k-1}}(\theta)=\lim_{k\rightarrow\infty}\pi^T({T_{2k-1}})\allone=1
$ \textit{a.s.} because $\{\pi(t)\}_{t=1}^\infty$ are stochastic vectors. Hence, $\lim_{t\rightarrow\infty}\pi^T(t)\sum_{\theta\in\Theta^*}\mu_t(\theta)=1$ \textit{a.s.}, \blue{because the limit of a sequence is equal to the limit of each of its subsequences whenever the former exists}.

We now prove that $\liminf_{t\rightarrow\infty} \sum_{\theta\in\Theta^*}\mu_t(\theta)=\allone$ \textit{a.s.} Suppose this is false, i.e., suppose there exists an $i\in[n]$ such that $\liminf_{t\rightarrow\infty}\sum_{\theta\in\Theta^*}\mu_{i,t}(\theta)<1$. Then there exist $\varepsilon>0$ and a sequence, $\{\varphi_k\}_{k=1}^\infty\subset\N$ such that $\sum_{\theta\in\Theta^*}\mu_{i,\varphi_k}(\theta)\leq 1-\varepsilon$ for all $k\in\N$. Since there also exists a $p^*>0$ such that $\pi(t)\geq p^*\allone$ \textit{a.s.} for all $t\in\N$, we have for all $k\in\N$:
\begin{align*}
    \pi^T({\varphi_k})\sum_{\theta\in\Theta^*}\mu_{\varphi_k}(\theta^*)&\leq \pi_i({\varphi_k})(1-\varepsilon)+ \sum_{j\in[n]\setminus\{i\}}\pi_j({\varphi_k}) \cdot 1\cr
    &= \sum_{j=1}^n \pi_j(\varphi_k) - \varepsilon\pi_i({\varphi_k})\cr
    &\leq 1-\varepsilon p^*\cr
    &<1,
\end{align*}
which contradicts the conclusion of the previous paragraph. Hence, $\liminf_{t\rightarrow\infty}\sum_{\theta\in\Theta^*}\mu_t(\theta)=\allone$ indeed holds \textit{a.s.}, which means that
\begin{align}\label{eq:lim_sum_mu}
    \lim_{t\rightarrow\infty}\sum_{\theta\in\Theta^*}\mu_t(\theta)=\allone\quad \textit{a.s.}
\end{align}  \blue{In view of the definition of $\Theta^*$ (see Section~\ref{sec:formulation}),~\eqref{eq:last_of_first_part} means that 
     the beliefs of agent $i$ asymptotically concentrate only on those states that generate the i.i.d. signals $\{\omega_{i,t}\}_{t=1}^\infty$ according to the true probability distribution $l_i(\cdot|\theta^*)$. That is, agent $i$ asymptotically rules out all those states that generate signals according to distributions that differ from the one associated with the true state. Since agent $i$ knows that each of the remaining states generates $\{\omega_{i,t}\}_{t=1}^\infty$ according to $l_i(\cdot|\theta^*)$, this implies that agent $i$ estimates the true distributions of her forthcoming signals with arbitrary accuracy as $t\rightarrow\infty$, i.e., her beliefs weakly merge to the truth. This claim is  proved formally below.}

For any $i\in [n]$ and $k\in\N$, we have
\begin{align*}
    m_{i,t}^{(k)}(s_1,\ldots, s_k)&=\sum_{\theta\in\Theta}\prod_{r=1}^k l_i(s_r|\theta)\mu_{i,t}(\theta)\cr
    &\stackrel{(a)}{=}\sum_{\theta\in\Theta\setminus\Theta^*}\prod_{r=1}^k l_i(s_r|\theta)\mu_{i,t}(\theta)+\left(\prod_{r=1}^k l_i(s_r|\theta^*)\right)\sum_{\theta\in\Theta^*}\mu_{i,t}(\theta)\cr
    &\stackrel{t\rightarrow\infty}{\longrightarrow}\sum_{\theta\in\Theta\setminus\Theta^*}\prod_{r=1}^k l_i(s_r|\theta)\cdot 0+\left(\prod_{r=1}^k l_i(s_r|\theta^*)\right)\cdot 1\nonumber\\
    &=\prod_{r=1}^k l_i(s_r|\theta^*)\quad\P^*\textit{-a.s.}
\end{align*}
where (a) follows from Definition~\ref{def:obs_equivalence} and the  definition of $\Theta^*$. Thus, every agent's beliefs weakly merge to the truth $\P^*$-\textit{a.s}.\\\\
\textit{Proof of~\eqref{item:asymptotic_learning}}: Next, we note that if Assumption~\ref{item:gamma_epoch} holds and $\theta^*$ is identifiable, then:
$$
\lim_{t\rightarrow\infty}\mu_t(\theta^*) = \lim_{t\rightarrow\infty}\sum_{\theta\in\{\theta^*\}}\mu_t(\theta)= \lim_{t\rightarrow\infty}\sum_{\theta\in\Theta^*}\mu_t(\theta) = \allone
$$
\textit{a.s.}, where the last step follows from~\eqref{eq:last_of_first_part}. This proves~\eqref{item:asymptotic_learning}.

\subsection*{Proof of Theorem~\ref{thm:usc}}
\blue{To begin, }suppose $\{A(t)\}_{t=0}^\infty$ is a deterministic uniformly strongly connected chain, \blue{and let $B$ denote the constant satisfying Condition~\ref{item:b_strong} in Definition~\ref{def:unif_strong_connect}}. Then one can easily verify that \blue{Assumptions~\ref{item:gamma_epoch} and~\ref{item:initial_connectivity} hold} (see the proof of Lemma~\ref{lem:b_connect_assumptions} for a \blue{detailed} verification). \blue{Moreover, $\{A(t)\}_{t=0}^\infty\in\pstar$ by Lemma 5.8 of~\cite{touri2012product}.} \blue{Thus, Assumptions~\ref{item:gamma_epoch} -~\ref{item:seq_indep} hold (the last two of them hold trivially)}, implying that Equation~\eqref{eq:last_of_first_part} holds, which proves that $c_\theta = 0$ for all $\theta\in\Theta\setminus\Theta^*$. So, we restrict our subsequent analysis to the states belonging to $\Theta^*$\blue{, and  we let $\theta$ denote a generic state in $\Theta^*$.}

\blue{Since we aim to show that all the agents converge to a consensus, we first show that their beliefs attain \textit{synchronization} as time goes to $\infty$ (i.e., $\lim_{t\rightarrow \infty} \left(\mu_{i,t}(\theta) - \mu_{j,t}(\theta)\right)=0$ \textit{a.s.} for all $i,j\in [n]$), and then show that the agents' beliefs converge to a steady state almost surely as time goes to $\infty$.}
\blue{\subsubsection*{Synchronization} To achieve synchronization asymptotically in time, the quantity $|\max_{ i\in [n]}\mu_{i,t}(\theta) - \min_{j\in [n] } \mu_{j,t}(\theta) |$, which is the difference between the network's maximum and minimum beliefs in the state $\theta$, must approach 0 as $t\rightarrow\infty$. Since this requirement is similar to asymptotic stability, and since the update rule~\eqref{eq:vector_form} involves only one non-linear term, we are motivated to identify a Lyapunov function associated with linear dynamics on uniformly strongly connected networks. One such function is the \textit{quadratic comparison function} $V_\pi:\R^n \times \mathbb N_0 \rightarrow \R$, defined as follows in~\cite{touri2012product}:}
$$
    V_{\pi}(x, k): = \sum_{i=1}^n \pi_i(k) (x_i - \pi^T(k) x)^2.
$$

\blue{Remarkably, the function $V_\pi(\cdot, k)$ is comparable in magnitude with the difference function $d(x):=|\max_{i\in[n]} x_i - \min_{j\in[n] }x_j|$. To be specific, Lemma~\ref{lem:comparison_function} shows that for each $k\in\N_0$,}
\begin{align}\label{eq:lyapunov_both}
    (p^*/2)^{ \frac{1}{2} } d(x) \leq \sqrt{V_{\pi}(x,k)} \leq d(x).
\end{align}

\blue{As a result, just like $V_\pi$, the difference function $d(\cdot)$ behaves like a Lyapunov function for linear dynamics on a time-varying network described by $\{A(t)\}_{t=0}^\infty$. To elaborate, $V_\pi$ being a Lyapunov function means that, for the linear dynamics $x(k+1) = A(k) x(k)$ with $x(0)\in\R^n$ as the initial condition, there exists a constant $\kappa\in (0,1)$ such that
\begin{align*}
    V_{\pi}(x((q+1)B), (q+1)B) \leq (1-\kappa)^q V_\pi (x(0), 0)
\end{align*}
for all $q\in \N_0$ (see Equation (5.18) in~\cite{touri2012product}). This inequality can be combined with~\eqref{eq:lyapunov_both} to obtain a similar inequality for the function $d(\cdot)$ as follows: \black{in the light of~\eqref{eq:lyapunov_both}, the inequality above implies the following for all $q\in\N_0$:
\begin{align*}
    d(x(q+1)B)\leq \sqrt{\frac{2(1-\kappa)^q}{p^* }} d(x(0)).
\end{align*}
Now, note that there exists a $q_0\in\N_0$ that is large enough for $\sqrt{\frac{2(1-\kappa)^{q_0} }{p^* }} <1$ to hold. We then have
\begin{align*}
    d(x(T_0))\leq \alpha d(x(0)), 
\end{align*}
where $T_0:=(q_0+1)B$ and $\alpha:=\sqrt{\frac{2(1-\kappa)^{q_0} }{p^* }} <1$.}} More explicitly, we have {$d( A(  T_0  : 0 )x_0) \leq \alpha d( x_0 )$}
for all initial conditions $x_0\in\R^n$. Now, given any $r\in \N$, by the definition of uniform strong connectivity the truncated chain $\{A(t)\}_{t=rB}^\infty$ is also $B$-strongly connected. Therefore, the above inequality can be generalized to:
\begin{align}\label{eq:intermediate}
    d(A(T_0 + rB: rB) x_0 ) \leq \alpha d(x_0).
\end{align}

\blue{By using some algebra involving the row-stochasticity of the chain $\{A(t)\}_{t=0}^\infty$, Lemma~\ref{lem:last_hopefully} transforms~\eqref{eq:intermediate} into the following, where $t_1, t_2\in \N_0$ and $t_1<t_2$:}
\begin{align}\label{eq:last_d_ineq}
    d(A(t_2: t_1)x_0 )\leq \alpha^{ \frac{t_2- t_1}{T_0} - 2} d(x_0 ).
\end{align}
\blue{For the linear dynamics $x(k+1) = A(k) x(k)$, \eqref{eq:last_d_ineq} implies that $d(x(k))\rightarrow 0$ as $k\rightarrow \infty$. Since we need a similar result for the non-linear dynamics~\eqref{eq:vector_form}, we first recast~\eqref{eq:vector_form} into an equation involving backward matrix products (such as $A(t_2: t_1)$), and then use~\eqref{eq:last_d_ineq} to obtain the desired limit.  The first step yields the following, which is straightforward to prove by induction~\cite{liu2014social} }
\begin{align}\label{eq:rho_eqn}
    \mu_{t+1}(\theta) = A(t+1:0)\mu_0(\theta) + \sum_{k=0}^t A(t+1:k+1)\rho_k(\theta),
\end{align}
where \blue{ $\rho_k(\theta)$ is the vector with entries:
$$
    \rho_{i,k}(\theta):=a_{ii}(k)\left(\frac{l_i(\omega_{i,k+1} | \theta)}{ m_{i,k}(\omega_{i,k+1}) } -1 \right)\mu_{i,k}(\theta).
$$}
\blue{We now apply $d(\cdot)$ to both sides of~\eqref{eq:rho_eqn} so that we can make effective use of~\eqref{eq:last_d_ineq}. We do this below.}
\begin{align} \label{eq:diff_rho_eq}
    d(\mu_{t+1}(\theta)) 
    &\stackrel{(a)}{\leq} d( A(t+1:0)\mu_0(\theta))+ \sum_{k=0}^t d( A(t+1:k+1)\rho_k(\theta))\cr
    &\stackrel{(b)}{\leq} \alpha^{\frac{t+1}{T_0} -2}d(\mu_0(\theta)) + \sum_{k=0}^t \alpha^{\frac{t-k}{T_0}-2} d(\rho_k(\theta)).
\end{align}
\blue{In the above chain of inequalities, (b) follows from~\eqref{eq:last_d_ineq}, and (a) follows from the fact that $d(x+y)\leq d(x) + d(y)$ for all $x,y\in\R^n$.}

\blue{We will now show that $\lim_{t\rightarrow\infty}d(\mu_{t+1}(\theta))=0$ \textit{a.s.} Observe that the first term on the right hand side of~\eqref{eq:diff_rho_eq} vanishes as $t\rightarrow\infty$. To show that the second term also vanishes, we use} some arguments of~\cite{liu2014social} below.

Note that Theorem ~\ref{thm:main}~\eqref{item:weak_merge} implies that for all $i\in [n]$ and $\theta\in\Theta^*$:
$$
l_i(\omega_{i,t+1}|\theta) - m_{i,t}(\omega_{i,t}) = l_i(\omega_{i,t+1}|\theta^*) - m_{i,t}(\omega_{i,t}) \rightarrow 0
$$
\textit{a.s.} as $t\rightarrow\infty$. It now follows from the definition of $\rho_k(\theta)$ that $\lim_{k\rightarrow\infty}\rho_k(\theta)= \mathbf 0$ \textit{a.s. }for all $\theta\in\Theta^*$. Thus, $\lim_{k\rightarrow\infty}d(\rho_k(\theta)) = 0$ \textit{a.s.} for all $\theta\in\Theta^*$.

Next, note that $\sum_{k=0}^t \alpha^{ \frac{t-k}{T_0}  -2}\leq \alpha^{-2} \cdot\frac{1}{1-\alpha ^{1/T_0} }<\infty$. Since $\lim_{k\rightarrow\infty}d(\rho_k(\theta) ) =0$ \textit{a.s.}, we have\\ $\lim_{t\rightarrow\infty}\sum_{k=0}^t \alpha^{\frac{t-k}{T_0}-2} d(\rho_k(\theta))=0$ \textit{a.s.} by Toeplitz Lemma. Thus,~\eqref{eq:diff_rho_eq} now implies that $\lim_{t\rightarrow\infty} d(\mu_{t+1}(\theta))=0$ \textit{a.s.} for all $\theta\in\Theta^*$\blue{, i.e., synchronization is attained as $t\rightarrow\infty$.}
\subsubsection*{Convergence to a Steady State} 
\blue{We now show that $\lim_{t\rightarrow\infty}\mu_{i,t}(\theta)$ exists \textit{a.s.} for each $i\in [n]$ because $\lim_{t\rightarrow\infty}\pi^T(t)\mu_t(\theta)$ exists \textit{a.s.} by Lemma~\ref{lem:submtg}. Formally, we have the following almost surely: }
\begin{align*}
    \lim_{t\rightarrow\infty}\mu_{i,t}(\theta) &= \lim_{t\rightarrow\infty}\left(\mu_{i,t}(\theta)\sum_{j=1}^n \pi_j(t)\right) \cr
    &=\lim_{t\rightarrow\infty} \sum_{j=1}^n \pi_j(t)\left(\mu_{j,t}(\theta) + ( \mu_{i,t}(\theta) - \mu_{j,t}(\theta) ) \right)\cr
    &\stackrel{(a)}{=}\lim_{t\rightarrow\infty} \sum_{j=1}^n \pi_j(t)\mu_{j,t}(\theta)\cr
    &=\lim_{t\rightarrow\infty}\pi^T(t)\mu_t(\theta), 
\end{align*}
\blue{which exists almost surely. Here (a) holds because $\lim_{t\rightarrow\infty}(\mu_{i,t}(\theta) - \mu_{j,t}(\theta) )=0$ \textit{a.s.} as a result of asymptotic synchronization.}

We have thus shown that $\lim_{t\rightarrow\infty}\mu_t(\theta)$ exists \textit{a.s.} for all $\theta\in\Theta^*$ and that $\lim_{t\rightarrow\infty}|\mu_{i,t}(\theta) - \mu_{j,t}(\theta)|=0$ \textit{a.s.} for all $i,j\in [n]$ and $\theta\in\Theta^*$. It follows that for each $\theta\in\Theta^*$, $\lim_{t\rightarrow\infty}\mu_t(\theta)=C_\theta\allone$ \textit{a.s.} for some scalar random variable $C_\theta=C_\theta(A(0),\omega_1,A(1),\omega_2,\ldots)$. This concludes the proof of the theorem.
\section{APPLICATIONS}\label{sec:implications}

We now establish a few useful implications of Theorem~\ref{thm:main}, some of which are either known results or their extensions.

\subsection{Learning in the Presence of Link Failures}\label{subsec:link_failures}

In the context of learning on random graphs, the following question arises naturally: is it possible for a network of agents to learn the true state of the world when the underlying influence graph is affected by random communication link failures?
For simplicity, let us assume that there exists a constant stochastic matrix $A$ such that $a_{ij}(t)$, which denotes the degree of influence of agent $j$ on agent $i$ at time $t$, equals $0$ if the link $(j,i)$ has failed and $A_{ij}$ otherwise. Then, if the link failures are independent across time, the following result answers the question raised.

\begin{corollary} \label{cor:link_breaks}
Let $([n],E)$ be a strongly connected directed graph whose weighted adjacency matrix $A = (A_{ij})$ satisfies $A_{ii}>0$ for all $i\in[n]$. Consider a system of $n$ agents satisfying the following criteria:
\begin{enumerate}
    \item Assumption~\ref{item:positive_prior} holds.
    \item The influence graph at any time $t\in\N$ is given by $G(t)=([n], E - F(t))$, where $F(t)\subset E$ denotes the set of failed links at time $t$, and $\{F(t)\}_{t=0}^\infty$ are independently distributed random sets.
    \item The sequences $\{\omega_t\}_{t=1}^\infty$ and  $\{F(t)\}_{t=0}^\infty$ are independent.
    \item At any time-step, any link $e\in E$ fails with a constant probability $\rho\in(0,1)$. However, the failure of $e$ may or may not be independent of the failure of other links.
    \item The probability that $G(t)$ is connected at time $t$ is at least $\sigma>0$ for all $t\in\N_0$.
\end{enumerate}
Then, under the update rule~\eqref{eq:main}, all the agents learn the truth asymptotically \textit{a.s}. \end{corollary}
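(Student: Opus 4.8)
The plan is to verify that the random chain $\{A(t)\}_{t=0}^\infty$ meets Assumptions~\ref{item:positive_prior}--\ref{item:seq_indep} together with Assumption~\ref{item:gamma_epoch}, and then to invoke Theorem~\ref{thm:main}\eqref{item:asymptotic_learning} (which additionally needs $\theta^*$ to be identifiable, as asymptotic rather than merely weak learning always does; I treat this as implicit in the statement). Throughout I adopt the natural convention that keeps each $A(t)$ stochastic: when a link $(j,i)\in E$ fails, its weight $A_{ij}$ is reassigned to agent $i$'s self-loop, so that $a_{ii}(t)\geq A_{ii}>0$ and each surviving off-diagonal weight equals $A_{ij}$. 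Set $\delta_0:=\min\{A_{ij}:A_{ij}>0\}\in(0,1]$. Assumptions~\ref{item:positive_prior}, \ref{item:indep} and~\ref{item:seq_indep} are then immediate: the first is hypothesis~1, while the other two hold because each $A(t)$ is a deterministic function of $F(t)$, the sets $\{F(t)\}$ are independent, and $\{\omega_t\}$ is independent of $\{F(t)\}$.

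For Assumption~\ref{item:pstar} I would pass to the expected chain. Since every link fails with the same probability $\rho$ at every instant, $\E^*[A(t)]$ is a single time-invariant stochastic matrix $\bar A$ whose support contains the strictly positive diagonal and all edges of $E$ (each scaled by $1-\rho>0$). Because $([n],E)$ is strongly connected and $\bar A$ has a positive diagonal, the constant chain $\{\bar A\}$ is uniformly strongly connected with $B=1$, hence lies in $\pstar$ by Lemma 5.8 of~\cite{touri2012product}; Theorem 5.1 of~\cite{touri2012product} then transfers membership to $\{A(t)\}$, giving Assumption~\ref{item:pstar}.

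The crux is Assumption~\ref{item:gamma_epoch}. Reading ``$G(t)$ connected'' as strong connectivity, I would use a reachability argument: if $G(\tau)$ is strongly connected for every $\tau$ in a length-$(n-1)$ window $\{t_s,\dots,t_s+n-2\}$, then, since each node carries a self-loop (so the set of nodes reachable from $i$ never shrinks) and strong connectivity forces that set to grow strictly until it is all of $[n]$, the product satisfies $(A(t_s+n-1:t_s))_{ji}\geq\delta_0^{\,n-1}$ for all $i,j\in[n]$. Taking $\mathcal O_i=[n]$ (always observationally self-sufficient), $\mathcal T_i=\{t_s+n-1\}$, and $\gamma:=\delta_0^{\,n-1}$ (which also bounds every self-loop from below, as $a_{jj}\geq A_{jj}\geq\delta_0\geq\gamma$), such a window is a $\gamma$-epoch. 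Putting $B:=n-1$, $t_{2k-1}:=(k-1)(n-1)$ and $t_{2k}:=k(n-1)$, the windows are disjoint, the events $\{G(\tau)\text{ strongly connected}\}$ depend on disjoint independent blocks of $\{F(t)\}$, and each has probability at least $\sigma$; hence each candidate epoch has probability at least $\sigma^{\,n-1}>0$, so $\sum_{k}\P^*([t_{2k-1},t_{2k}]\text{ is a }\gamma\text{-epoch})\geq\sum_{k}\sigma^{\,n-1}=\infty$, establishing Assumption~\ref{item:gamma_epoch}.

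Finally, for Assumption~\ref{item:initial_connectivity} I would let $T$ be the end of the \emph{first} good window; this is a.s.\ finite, since the good windows are independent with probability at least $\sigma^{\,n-1}$ and so infinitely many occur by the second Borel--Cantelli lemma. Factoring $A(T:0)=A(T:T-(n-1))\,A(T-(n-1):0)$ and using that the first factor is entrywise $\geq\delta_0^{\,n-1}$ while the persistent self-loop at agent~$1$ gives $(A(T-(n-1):0))_{11}\geq A_{11}^{\,T-(n-1)}$, I obtain $(A(T:0))_{i1}\geq\delta_0^{\,n-1}A_{11}^{\,T-(n-1)}$, whence $\log(A(T:0))_{i1}\geq(n-1)\log\delta_0+(T-(n-1))\log A_{11}$. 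As $T$ is dominated by $(n-1)$ times a geometric number of windows it has finite mean, so $\E^*[\log(A(T:0))_{i1}]>-\infty$ for every $i$, giving Assumption~\ref{item:initial_connectivity}. With all of Assumptions~\ref{item:positive_prior}--\ref{item:seq_indep} and~\ref{item:gamma_epoch} verified and $\theta^*$ identifiable, Theorem~\ref{thm:main}\eqref{item:asymptotic_learning} yields asymptotic learning a.s. The main obstacle is the $\gamma$-epoch verification---both the deterministic reachability-and-weight bound over a good window and the independence bookkeeping that makes the epoch probabilities non-summable---while the finite-mean control of the random time $T$ in Assumption~\ref{item:initial_connectivity} is the other delicate point.
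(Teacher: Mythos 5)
Your proof is correct and follows essentially the same route as the paper's: verify Assumptions~\ref{item:positive_prior}--\ref{item:seq_indep} and~\ref{item:gamma_epoch} by passing to the (constant, irreducible, positive-diagonal) expected chain for Class $\pstar$ and by using disjoint, independent windows of strong connectivity for the recurring $\gamma$-epochs and the initial-connectivity time $T$, then invoke Theorem~\ref{thm:main}\eqref{item:asymptotic_learning}. You in fact supply two details the paper leaves implicit --- the explicit reachability bound $(A(t_s+n-1:t_s))_{ji}\geq\delta_0^{\,n-1}$ over a good window, and the finite-mean control of $T$ needed to conclude $\E^*[\log(A(T:0))_{i1}]>-\infty$ rather than mere almost-sure finiteness.
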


    \begin{proof}
Since $\{F(t)\}_{t=0}^\infty$ are independent across time and also independent of the observation sequence, it follows that the chain $\{A(t)\}_{t=0}^\infty$ satisfies Assumptions~\ref{item:indep} and~\ref{item:seq_indep}.

Next, we observe that for any $t\in\N_0$, we have $\E^*[A(t)]=(1-\rho)A$ and hence, $\{\E^*[A(t)]\}$ is a static chain of irreducible matrices because $A$, being the weighted adjacency matrix of a strongly connected graph, is irreducible. Also, $\min_{i\in [n] }A_{ii}>0$ implies that $\{\E^*[A(t)]\}$ has the strong feedback property. It now follows from Theorem 4.7 and Lemma 5.7 of~\cite{touri2012product} that $\{\E^*[A(t)]\}$ belongs to Class $\pstar$. As a result, Assumption~\ref{item:pstar} holds.

We now prove that Assumption~\ref{item:gamma_epoch} holds. To this end, observe that
$
    \{(G(nt),G(nt+1),G(nt+n-1))\}_{t=0}^\infty
$
is a sequence of independent random tuples. Therefore, if we let $L_r$ denote the event that all the graphs in the $r$\textsuperscript{th} tuple of the above sequence are strongly connected, then $\{L_r\}_{r=0}^\infty$ is a sequence of independent events. Note that $P(L_r)\geq \sigma^{n}$ and hence, $\sum_{r=0}^\infty P(L_r)=\infty$. Thus, by the Second Borel-Cantelli Lemma, infinitely many $L_r$ occur \textit{a.s}. Now, it can be verified that if $L_r$ occurs, then at least one sub-interval of $[(r-1)n, rn]$ is a $\gamma$-epoch for some positive $\gamma$ that does not depend on $r$. Thus, infinitely many $\gamma$-epochs occur \textit{a.s.}

Finally, the preceding arguments also imply that there almost surely exists a time $T<\infty$ such that exactly $1$ of the events $\{L_r\}_{r=0}^\infty$ has occurred until time $T$. With the help of the strong feedback property of $\{A(t)\}_{t=0}^\infty$ (which holds because $A_{ii}>0$), it can be proven that $\log(A(T:0))_{i1}>-\infty$ \textit{a.s.} for all $i\in [n]$. Thus, Assumption~\ref{item:initial_connectivity} holds. 

We have shown that all of the Assumptions~\ref{item:gamma_epoch} - \ref{item:seq_indep} hold. Since $\theta^*$ is identifiable, it follows from Theorem~\ref{thm:main} that all the agents learn the truth asymptotically \textit{a.s.}
\end{proof}

\subsection{Inertial Non-Bayesian Learning}

In real-world social networks, it is possible that some individuals affected by psychological inertia cling to their prior beliefs in such a way that they do not incorporate their own observations in a fully Bayesian manner. This idea is closely related to the notion of prejudiced agents that motivated the popular Friedkin-Johnsen model in~\cite{friedkin1999social}. To describe the belief updates of such inertial individuals, we modify the update rule~\eqref{eq:main} by replacing the Bayesian update term $\text{BU}_{i,t+1}(\theta)$ with a convex combination of $\text{BU}_{i,t+1}(\theta)$ and the $i$\textsuperscript{th} agent's previous belief $\mu_{i,t}(\theta)$, i.e., 
\begin{align}\label{eq:inertia}
    \mu_{i,t+1}(\theta)&=a_{ii}(t)(\lambda_i(t)\mu_{i,t}(\theta)+(1-\lambda_i(t))\text{BU}_{i,t+1}(\theta))\nonumber\\
    &+\sum_{j\in\mathcal N_i(t)}a_{ij}(t)\mu_{j,t}(\theta),
\end{align}
where $\lambda_i(t)\in [0,1]$ denotes the degree of inertia of agent $i$ at time $t$. As it turns out, Theorem~\ref{thm:main} implies that even if all the agents are inertial, they will still learn the truth asymptotically \textit{a.s.} provided the inertias are all bounded \blue{away from 1}.

\begin{corollary}\label{cor:inertial}
Consider a network of $n$ inertial agents whose beliefs evolve according to~\eqref{eq:inertia}. Suppose that for each $i\in[n]$, the sequence $\{\lambda_i(t)\}_{t=0}^\infty$ is deterministic. Further, suppose $\lambda_{\max}:=\sup_{t\in\N_0}\max_{i\in[n]}\lambda_i(t)<1$ and that Assumptions~\ref{item:positive_prior} -~\ref{item:seq_indep} hold. Then Assertions~\eqref{item:weak_merge} and~\eqref{item:asymptotic_learning} of Theorem~\ref{thm:main} are true.
\end{corollary}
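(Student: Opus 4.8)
The plan is to show that the inertial dynamics \eqref{eq:inertia} admits exactly the same vector representation as the baseline dynamics \eqref{eq:vector_form}, the only change being that the scalar multiplying the bracketed likelihood-ratio term on each diagonal is $a_{ii}(t)$ in the baseline case and $\tilde a_{ii}(t):=a_{ii}(t)(1-\lambda_i(t))$ in the inertial case. Writing $\text{BU}_{i,t+1}(\theta)=\frac{l_i(\omega_{i,t+1}|\theta)}{m_{i,t}(\omega_{i,t+1})}\mu_{i,t}(\theta)$ and subtracting $(A(t)\mu_t(\theta))_i$ from the $i$-th entry of \eqref{eq:inertia}, the $a_{ii}(t)\lambda_i(t)\mu_{i,t}(\theta)$ contribution and the $-a_{ii}(t)\mu_{i,t}(\theta)$ correction combine, and a one-line computation gives
$$
\mu_{t+1}(\theta)-A(t)\mu_t(\theta)=\text{diag}\left(\ldots,a_{ii}(t)(1-\lambda_i(t))\left[\frac{l_i(\omega_{i,t+1}|\theta)}{m_{i,t}(\omega_{i,t+1})}-1\right],\ldots\right)\mu_t(\theta).
$$
Thus the linear part $A(t)\mu_t(\theta)$ is untouched, so Assumptions~\ref{item:pstar}--\ref{item:seq_indep}, which concern only $\{A(t)\}$, hold verbatim; and since the $\lambda_i(t)$ are deterministic, $\tilde a_{ii}(t)$ is a deterministic, $\B_t$-measurable multiple of $a_{ii}(t)$ obeying $0\le\tilde a_{ii}(t)\le a_{ii}(t)$ with the same independence structure.

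Next I would re-run the machinery of Section~\ref{sec:main_result} with $a_{ii}(t)$ replaced by $\tilde a_{ii}(t)$ in the perturbation term only. The sign-based arguments are unaffected: because $\tilde a_{ii}(t)\ge 0$, the lower bound of Lemma~\ref{lem:key} still forces the diagonal perturbation to be conditionally non-negative, so $\E^*[\mu_{t+1}(\theta)\mid\B'_t]\ge\E^*[A(t)]\mu_t(\theta)$ and the submartingale conclusions of Lemma~\ref{lem:submtg} go through with the same absolute probability sequence $\{\pi(t)\}$. The logarithmic submartingale also survives: applying Jensen's inequality to \eqref{eq:inertia} uses the convex weights $a_{ii}(t)\lambda_i(t)$, $a_{ii}(t)(1-\lambda_i(t))$ and $\{a_{ij}(t)\}$, which still sum to one, and after the self-belief terms recombine one obtains the analogue of \eqref{eq:log_ineq} in which the relative-entropy term carries the extra factor $1-\lambda_i(t)\ge 0$; since that term is conditionally non-negative this change is harmless, and taking $\E^*[\cdot\mid\B'_t]$ again yields $\E^*[\log\mu_{t+1}(\theta^*)\mid\B'_t]\ge\E^*[A(t)]\log\mu_t(\theta^*)$. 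Consequently Lemma~\ref{lem:strongest} and the induction of Lemma~\ref{lem:modified_forecast_limit} reproduce the inertial analogue
$$
\lim_{t\rightarrow\infty}a_{ii}(t)(1-\lambda_i(t))\left(m_{i,t}^{(h)}(s_1,\ldots,s_h)-\prod_{r=1}^h l_i(s_r|\theta^*)\right)=0\quad\textit{a.s.}
$$
for all $h\in\N$, $i\in[n]$ and $s_1,\ldots,s_h\in S_i$. I would also note that the influence-propagation bounds of Lemma~\ref{lem:influence_relation} persist, since in \eqref{eq:inertia} the self-contribution $a_{ii}(t)\lambda_i(t)\mu_{i,t}(\theta)+a_{ii}(t)(1-\lambda_i(t))\text{BU}_{i,t+1}(\theta)$ dominates $a_{ii}(t)l_0\mu_{i,t}(\theta)$ exactly as in the baseline rule, so the belief lower bounds invoked under Assumptions~\ref{item:positive_prior} and~\ref{item:initial_connectivity} are unchanged.

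The hypothesis $\lambda_{\max}<1$ enters at the one place where the argument genuinely differs from Theorem~\ref{thm:main}: converting the displayed forecast limit into a statement about the raw forecasts. Wherever the proof of Theorem~\ref{thm:main} invokes a positive lower bound on the self-confidence (uniformly, $a_{ii}(t)\ge\delta$, in the strong-feedback branch of assertion~\eqref{item:weak_merge}, or at the $\gamma$-epoch times, $a_{jj}(t)\ge\gamma$, in the branch using Assumption~\ref{item:gamma_epoch}), I would instead use $a_{ii}(t)(1-\lambda_i(t))\ge\delta(1-\lambda_{\max})>0$, respectively $a_{jj}(t)(1-\lambda_j(t))\ge\gamma(1-\lambda_{\max})>0$. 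Because $\lambda_{\max}<1$ keeps these bounds strictly positive, dividing the forecast limit by the effective self-confidence gives $m_{i,t}^{(h)}(s_1,\ldots,s_h)\to\prod_{r=1}^h l_i(s_r|\theta^*)$ along the relevant times, exactly as needed; the remainder of the proof of assertion~\eqref{item:weak_merge} (observational self-sufficiency together with Lemma~\ref{lem:influence_relation} and \eqref{eq:last_of_first_part}) and of assertion~\eqref{item:asymptotic_learning} (identifiability) is then reproduced unchanged.

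The main obstacle is therefore not any single hard estimate but the bookkeeping of verifying that every appearance of $a_{ii}(t)$ in the original proof is of one of two kinds -- either as the diagonal of the unchanged stochastic matrix $A(t)$, where nothing changes, or as the perturbation coefficient, where it becomes $\tilde a_{ii}(t)$, still non-negative and, at the critical times, bounded below by a positive multiple of $1-\lambda_{\max}$. Checking that $\lambda_{\max}<1$ is exactly what prevents the effective self-confidence from degenerating and stalling the Bayesian update is the crux, and it is the only structural assumption on the inertias that the argument actually consumes.
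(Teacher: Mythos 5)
Your proposal is correct, but it takes a genuinely different route from the paper. The paper proves Corollary~\ref{cor:inertial} by \emph{reduction}: it builds an augmented network of $2n$ agents (each real agent plus a hypothetical copy holding the same belief), packages the inertial rule~\eqref{eq:inertia} as an instance of the original rule~\eqref{eq:main} on this augmented network via a $2n\times 2n$ block matrix $\tilde A(t)$ with diagonal blocks $\hat A(t)/2+\mathrm{diag}(b(t))$ and off-diagonal blocks $\hat A(t)/2+\mathrm{diag}(w(t))$, verifies Assumptions~\ref{item:positive_prior}--\ref{item:seq_indep} for $\{\tilde A(t)\}_{t=0}^\infty$ (in particular constructing the absolute probability sequence $\tilde\pi^T(t)=\tfrac12[\pi^T(t)\ \pi^T(t)]$ and showing $\gamma$-epochs survive with the degraded constant $\lambda_0^B\gamma$), and then invokes Theorem~\ref{thm:main} as a black box. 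You instead observe that~\eqref{eq:inertia} has exactly the vector form~\eqref{eq:vector_form} with the diagonal perturbation coefficient $a_{ii}(t)$ replaced by $\tilde a_{ii}(t)=a_{ii}(t)(1-\lambda_i(t))$ while the linear part $A(t)\mu_t(\theta)$ is untouched, and you re-run the proof of Theorem~\ref{thm:main} with that substitution; your computation of the modified vector form is correct, the sign and submartingale arguments (Lemmas~\ref{lem:key},~\ref{lem:submtg},~\ref{lem:strongest},~\ref{lem:modified_forecast_limit}) indeed only need $\tilde a_{ii}(t)\ge 0$, the Jensen step for the logarithmic submartingale works with the refined convex weights, Lemma~\ref{lem:influence_relation} persists since $\lambda_i(t)\mu+(1-\lambda_i(t))l_0\mu\ge l_0\mu$, and $\lambda_{\max}<1$ enters exactly where a positive lower bound on the effective self-confidence is consumed. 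The paper's approach buys modularity — Theorem~\ref{thm:main} is never reopened, and the same augmentation template is reused for the diffusion-adaptation corollary — at the cost of constructing and certifying the augmented chain; your approach avoids the augmentation entirely and isolates more transparently the single role of the hypothesis $\lambda_{\max}<1$, at the cost of an audit of every occurrence of $a_{ii}(t)$ in the proof of Theorem~\ref{thm:main}, which you carry out correctly.
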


    \begin{proof}
In order to use Theorem~\ref{thm:main} effectively, we first create a hypothetical copy of each of the $n$ inertial agents and insert all the copies into the given inertial network in such a way that the augmented network (of $2n$ agents) has its belief evolution described by the original update rule~\eqref{eq:main}. To this end, let $[2n]$ index the agents in the augmented network so that for each $i\in [n]$, the $i$\textsuperscript{th} real agent is still indexed by $i$ whereas its copy is indexed by $i+n$. This means that for every $i\in [n]$, we let the beliefs, the signal structures and the observations of agent $i+n$ equal those of agent $i$ at all times, i.e., let $\mu_{i+n,t}(\theta):=\mu_{i,t}(\theta)$, $S_{i+n}:=S_i$, $l_{i+n}(\cdot|\theta):=l_i(\cdot|\theta)$ and $\omega_{i+n,t}:=\omega_{i,t}$ for all $\theta\in\Theta$ and all $t\in\N_0$. As a result,~\eqref{eq:inertia} can now be expressed as:
\begin{gather}\label{eq:temp}
    \mu_{i,t+1}(\theta) =  b_{i}(t) \text{BU}_{i,t+1}(\theta) + w_{i}(t)\mu_{i+n,t}(\theta)
    +\sum_{j\in\mathcal N_i(t)} \frac{1}{2} a_{ij}(t)\mu_{j,t}(\theta)+ \sum_{j\in\mathcal N_i(t)} \frac{1}{2} a_{ij}(t)\mu_{j+n,t}(\theta)
\end{gather}
for all $i\in [n]$, where $ b_i(t):=(1-\lambda_i(t))a_{ii}(t)$, and $w_i(t):=\lambda_i(t) a_{ii}(t)$ so that $a_{ii}(t) = b_i(t) + w_i(t)$. Now, let $b(t)\in\R^n$ and $w(t)\in\R^n$ be the vectors whose $i$\textsuperscript{th} entries are $b_i(t)$ and $w_i(t)$, respectively. Further, let $\hat A(t)\in\R^{n\times n}$ and $\tilde A(t)\in\R^{2n\times 2n}$ be the matrices defined by:
$$
    \hat a_{ij}(t) := (\hat A(t))_{ij} = 
    \begin{cases}
        a_{ij}(t), &\text{if }i\neq j\\
        0, &\text{if }i=j
    \end{cases}
$$
and
$$ 
    \tilde A(t) = 
        \begin{pmatrix}
            \hat A(t)/2 + \text{diag}(b(t)) & \hat A(t)/2 + \text{diag}(w(t)) \\
            \hat A(t)/2 + \text{diag}(w(t)) & \hat A(t)/2 + \text{diag}(b(t))
        \end{pmatrix}.
$$
Then, with the help of~\eqref{eq:temp}, one can verify that the evolution of beliefs in the augmented network is captured by:
\begin{align}\label{eq:tilde_main}
    \mu_{i,t+1}(\theta)=\tilde a_{ii}(t)\text{BU}_{i,t+1}(\theta)+\sum_{j\in[2n]\setminus\{i\}}\tilde a_{ij}(t)\mu_{j,t}(\theta),
\end{align}
where $\tilde a_{ij}(t)$ is the $(i,j)$-th entry of $\tilde A(t)$.

We now show that the augmented network satisfies Assumptions~\ref{item:positive_prior} -~\ref{item:seq_indep} with  $\{\tilde A(t)\}_{t=0}^\infty$ being the associated sequence of weighted adjacency matrices. 

It can be immediately seen that Assumption~\ref{item:positive_prior} holds for the augmented network because it holds for the original network. 

Regarding Assumption~\ref{item:initial_connectivity}, we observe that $b_i(t)\geq (1-\lambda_{\max}) a_{ii}(t)$ and $\tilde a_{ij}(t)\geq  \frac{1}{2}\hat a_{ij}(t) = \frac{1}{2}a_{ij}(t)$ for all distinct $i,j\in [n]$ and all $t\in\N_0$. Therefore,
\begin{align}\label{eq:lambda_bound}
    \tilde a_{ij}(t)\geq \lambda_0 a_{ij}(t) 
\end{align}
for all $i,j\in [n]$ and all $t\in\N_0$, where $\lambda_0:=\min\left \{1-\lambda_{\max},\frac{1}{2}\right \}$. Note that $\lambda_0>0$ because $\lambda_{\max}<1$. Since  Assumption~\ref{initial_connectivity} holds for the original network, it follows that 
\begin{align*} 
    (\tilde A(T:0))_{i1}\geq \lambda_0^T (A(T:0))_{i1}>0\textit{ a.s.}
\end{align*}
for all $i\in [n]$. By using the fact that $\tilde a_{(n+i)\,\,(n+j)}(t) = \tilde a_{(n+i)\,j}(t) = \tilde a_{ij}(t)$ for all distinct $i,j\in [n]$, we can similarly show that $(\tilde A(T:0))_{(n+i)\,1}>0$ \textit{a.s.} for all $i\in [n]$.

As for Assumption~\ref{item:pstar}, let $\{\pi(t)\}_{t=0}^\infty$ be an absolute probability process for $\{A(t)\}_{t=0}^\infty$ such that $\pi(t)\geq p^*\allone $ for some scalar $p^*>0$ (such a scalar exists because $\{A(t)\}_{t=0}^\infty$ satisfies Assumption~\ref{item:pstar}). Now, let $\{\tilde \pi(t)\}_{t=0}^\infty$ be a sequence of vectors in $\R^{2n}$ defined by $\tilde \pi_{i+n}(t)=\tilde \pi_i(t) = \pi_i(t)/2$ for all $i\in [n]$ and all $t\in\N_0$. We then have $\tilde \pi(t)\geq \mathbf 0$ and $\sum_{i=1}^{2n}\tilde \pi_i(t)=1$. Moreover, for all $i\in[n]$:
\begin{align*}
    \left(\tilde \pi^T(t+1)\tilde A(t)\right)_i&=\sum_{j=1}^{2n} \tilde a_{ji}(t)\tilde \pi_j(t+1) \cr 
    &=\sum_{j\in [n]\setminus\{i\}}  \left(\tilde a_{ji}(t)+\tilde a_{n+j\, i}(t)\right)\frac{\pi_j(t)}{2} + (\tilde a_{ii}(t)
    + \tilde a_{n+i\,i}(t) )\frac{\pi_i(t+1)}{2}\cr
    &= \sum_{j\in [n]\setminus\{i\}} a_{ji}(t)\frac{\pi_j(t)}{2}
    + (b_i(t) + w_i(t) )\frac{\pi_j(t)}{2}\cr
     &= \frac{1}{2}\sum_{i=1}^n a_{ji}(t)\pi_j(t+1)\cr
     &= \frac{1}{2}\left(\pi^T(t+1)A(t)\right)_i,
\end{align*}
and hence,
\begin{align}\label{eq:tilde_pi}
    \E^*\left[\left(\tilde \pi^T(t+1)\tilde A(t)\right)_i\mid\B_t\right ] = \frac{1}{2}\E^*\left[\left(\pi^T(t+1)A(t)\right)_i|\B_t \right]=\frac{1}{2}\pi_i(t) = \tilde \pi_i(t) 
\end{align}
for all $i\in [n]$. We can similarly prove~\eqref{eq:tilde_pi} for all $i\in \{n+1,\ldots, 2n\}$. This shows that $\{\tilde\pi(t)\}_{t=0}^\infty$ is an absolute probability process for $\{\tilde A(t)\}_{t=0}^\infty$. Since $\tilde \pi^T(t) = \frac{1}{2}[\pi^T(t)\,\,\pi^T(t)]$ implies that $\tilde\pi(t)\geq \frac{p^*}{2}\allone_{2n}$ for all $t\in\N_0$, it follows that $\{\tilde A(t)\}_{t=0}^\infty\in\mathcal P^*$, i.e., the augmented network satisfies Assumption~\ref{item:pstar}. 

Note that the augmented network also satisfies Assumptions~\ref{item:indep} and~\ref{item:seq_indep} because $\tilde A(t)$ is uniquely determined by $A(t)$ for every $t\in\N_0$ (under the assumption that $\{\lambda_i(t)\}_{t=0}^\infty$ is a deterministic sequence for each $i\in[n]$).

To complete the proof, we need to show that if $\{A(t)\}_{t=0}^\infty$ has feedback property (or satisfies Assumption~\ref{item:gamma_epoch}), then $\{\tilde A(t)\}_{t=0}^\infty$ also has feedback property (or satisfies Assumption~\ref{item:gamma_epoch}). Since the following holds for all $i\in [n]$:
\begin{align}\label{eq:almost_done}
    \tilde a_{i+n\,\,i+n}(t)= \tilde a_{ii}(t) = (1-\lambda_i(t))a_{ii}(t)\geq (1-\lambda_{\max})a_{ii}(t),
\end{align}
it follows that $\{\tilde A(t)\}_{t=0}^\infty$ has feedback property if $\{\tilde A(t)\}_{t=0}^\infty$ has feedback property. Now, suppose the original chain,  $\{ A(t)\}_{t=0}^\infty$ satisfies Assumption~\ref{item:gamma_epoch}. Recall that $\tilde a_{(n+i)\,\,(n+j)}(t) = \tilde a_{(n+i)\,j}(t) = \tilde a_{ij}(t)$ for all distinct $i,j\in [n]$. In the light of this, \eqref{eq:lambda_bound} and~\eqref{eq:almost_done} now imply that $\tilde a_{i+n\,\,j+n}(t)\geq \lambda_0 a_{ij}(t)$ for all $i,j\in [n]$ and all $t\in\N_0$. It follows that 
\begin{align}\label{eq:to_clinch_it}
    &\min\{(\tilde A(t_2:t_1))_{ij},(\tilde A(t_2:t_1))_{n+i\,\,n+j}\}\geq \lambda_0^{t_2-t_1} (A(t_2:t_1))_{ij} 
\end{align}
for all $i,j\in [n]$ and all $t_1,t_2\in\N_0$ such that $t_1\leq t_2$.
Moreover, if $\mathcal O$ is an observationally self-sufficient set for the original network, then both $\mathcal O$ and $n+\mathcal O$ are observationally self-sufficient sets for the augmented network. Therefore, by \eqref{eq:to_clinch_it}, if $[t_1, t_2]$ is a $\gamma$-epoch of duration at most $B$ for $\{A(t)\}_{t=0}^\infty$ then $[t_1, t_2]$ is a $\lambda_0^B\gamma$-epoch for $\{\tilde A(t)\}_{t=0}^\infty$. Assumption~\ref{item:gamma_epoch} thus holds for $\{\tilde A(t)\}_{t=0}^\infty$.

An application of Theorem~\ref{thm:main} to the augmented network now implies that the first two assertions of this theorem also hold for the original network.
\end{proof}

\color{blue}
\begin{remark} Interestingly, Corollaries~\ref{cor:link_breaks} and~\ref{cor:inertial} imply that non-Bayesian learning (both inertial and non-inertial) occur almost surely on a sequence of independent Erdos-Renyi random graphs, provided the edge probabilities of these graphs are uniformly bounded away from 0 and 1 (i.e., if $\rho(t)$ is the edge probability of $G(t)$, then there should exist constants $0< \delta< \eta<1$ such that $\delta\leq \rho(t) \leq \eta$ for all $t\in\N_0$.) This is worth noting because a sequence of Erdos-Renyi networks is \textit{a.s.} not uniformly strongly connected, which can be proved by using arguments similar to those used in Remarks~\ref{rem:first} and~\ref{rem:second}.
\end{remark}
\color{black}

\subsection{Learning via Diffusion and Adaptation}\label{subsec:diffusion_adaptation}

Let us extend our discussion to another variant of the original update rule~\eqref{eq:main}. As per this variant, known as learning via \textit{diffusion} and \textit{adaptation}~\cite{zhao2012learning}, every agent combines the Bayesian updates of her own beliefs with the most recent Bayesian updates of her neighbor's beliefs (rather than combining the Bayesian updates of her own beliefs with her neighbors' previous beliefs). As one might guess, this modification results in faster convergence to the truth in the case of static networks, as shown empirically in~\cite{zhao2012learning}. 

For a network of $n$ agents, the time-varying analog of the update rule proposed in~\cite{zhao2012learning} can be stated as:
\begin{equation}\label{eq:diffuse_adapt}
    \mu_{i,t+1}(\theta) = \sum_{j=1}^n a_{ij}(t) \text{BU}_{j,t+1}(\theta)
\end{equation}
for all $i\in [n]$, $t\in\N_0$ and $\theta\in\Theta$. On the basis of~\eqref{eq:diffuse_adapt}, we now generalize the theoretical results of~\cite{zhao2012learning} and establish that diffusion-adaptation almost surely leads to asymptotic learning even when the network is time-varying or random, provided it satisfies the assumptions stated earlier.

\begin{corollary} \label{cor:diffuse_adapt}
Consider a network $\mathcal H$ described by the rule~\eqref{eq:diffuse_adapt}, and suppose that the sequence $\{A(t)\}_{t=0}^\infty$ and the agents' initial beliefs satisfy Assumptions~\ref{item:positive_prior} -~\ref{item:seq_indep}. Then Assertions~\eqref{item:weak_merge} and~\eqref{item:asymptotic_learning} of Theorem~\ref{thm:main} hold.
\end{corollary}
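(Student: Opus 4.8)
The plan is to re-run the argument behind Theorem~\ref{thm:main}, checking that each ingredient survives the passage from~\eqref{eq:main} to the diffusion--adaptation rule~\eqref{eq:diffuse_adapt}. First I would record the vector form of~\eqref{eq:diffuse_adapt}: writing $\text{BU}_{t+1}(\theta)$ for the vector with entries $\text{BU}_{j,t+1}(\theta)$, rule~\eqref{eq:diffuse_adapt} reads $\mu_{t+1}(\theta)=A(t)\,\text{BU}_{t+1}(\theta)$, and since $\text{BU}_{j,t+1}(\theta)=\tfrac{l_j(\omega_{j,t+1}\mid\theta)}{m_{j,t}(\omega_{j,t+1})}\mu_{j,t}(\theta)$ this decomposes as
\[
 \mu_{t+1}(\theta)-A(t)\mu_t(\theta)=A(t)\,\text{diag}\!\left(\ldots,\tfrac{l_j(\omega_{j,t+1}\mid\theta)}{m_{j,t}(\omega_{j,t+1})}-1,\ldots\right)\mu_t(\theta).
\]
This is the exact analog of~\eqref{eq:vector_form}; the only structural change is that the ``noise'' now carries a left factor $A(t)$ and the diagonal no longer contains the self-weights $a_{ii}(t)$.

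Next I would reprove the submartingale facts of Lemma~\ref{lem:submtg} for this dynamics. The one new input is the nonnegativity that replaces the lower bound of Lemma~\ref{lem:key}: for $\theta\in\Theta^*$, $\E^*[\text{BU}_{j,t+1}(\theta)\mid\B'_t]=\mu_{j,t}(\theta)\sum_{s\in S_j}\tfrac{l_j(s\mid\theta^*)^2}{m_{j,t}(s)}\ge\mu_{j,t}(\theta)$ by Cauchy--Schwarz (using $\sum_s m_{j,t}(s)=1$). Since $A(t)$ is $\P^*$-independent of $\B'_t$ and of $\omega_{t+1}$ (Assumptions~\ref{item:indep},~\ref{item:seq_indep}), this gives $\E^*[\mu_{t+1}(\theta)\mid\B'_t]\ge\E^*[A(t)]\mu_t(\theta)$, whence, left-multiplying by $\pi^T(t+1)$ and invoking Definition~\ref{def:abs_prob_seq}, $\{\pi^T(t)\mu_t(\theta)\}$ is a bounded submartingale. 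The same Jensen/relative-entropy computation as in~\eqref{eq:log_ineq}--\eqref{eq:submtg_log}, now applied to $\log\mu_{i,t+1}(\theta^*)\ge\sum_j a_{ij}(t)\big[\log\mu_{j,t}(\theta^*)+\log\tfrac{l_j(\omega_{j,t+1}\mid\theta^*)}{m_{j,t}(\omega_{j,t+1})}\big]$, shows $\{\pi^T(t)\log\mu_t(\theta^*)\}$ is an $L^1$-bounded submartingale, so the three limits in Lemma~\ref{lem:submtg} again exist a.s.

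The main obstacle is extracting the one-step forecast limit, because the noise $A(t)\,\text{diag}(\cdot)\mu_t$ is no longer diagonal and cannot be peeled coordinate-by-coordinate as in~\eqref{eq:expectation_u}. Here I would argue as follows: the final assertion of (the analog of) Lemma~\ref{lem:submtg} yields $\pi^T(t+1)A(t)\,\text{diag}(\epsilon_j(t))\mu_t(\theta^*)\to0$ a.s., where $\epsilon_j(t):=\sum_{s\in S_j}\tfrac{l_j(s\mid\theta^*)^2}{m_{j,t}(s)}-1\ge0$. Conditioning on $\B'_t$, using the conditional Dominated Convergence Theorem together with the fact that $\text{diag}(\epsilon_j(t))\mu_t(\theta^*)$ is $\B'_t$-measurable while $A(t)$ is independent of $\B'_t$, the absolute-probability identity $\pi^T(t+1)\E^*[A(t)]=\pi^T(t)$ collapses the left factor and gives $\pi^T(t)\,\text{diag}(\epsilon_j(t))\mu_t(\theta^*)\to0$; since $\pi(t)\ge p^*\allone$, the nonnegative sum $p^*\sum_j\epsilon_j(t)\mu_{j,t}(\theta^*)$ is squeezed to $0$, forcing $\epsilon_j(t)\mu_{j,t}(\theta^*)\to0$ for each $j$. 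Combining this with $\liminf_t\mu_{j,t}(\theta^*)>0$ (obtained exactly as in the paragraph preceding~\eqref{eq:one-step-ahead}) gives $\epsilon_j(t)\to0$, and the equality case of Cauchy--Schwarz forces $m_{j,t}(s)\to l_j(s\mid\theta^*)$ for every $j\in[n]$ and $s\in S_j$. Notably this holds for \emph{all} agents under Assumption~\ref{item:pstar} alone, so the factor $a_{ii}(t)$ simply drops out of the analog of Lemma~\ref{lem:modified_forecast_limit}, whose induction then goes through verbatim.

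Finally I would close by transplanting the two concluding arguments of Theorem~\ref{thm:main}. The influence-propagation estimate of Lemma~\ref{lem:influence_relation} has an even cleaner analog here: since $\tfrac{l_j(\omega_{j,t+1}\mid\theta)}{m_{j,t}(\omega_{j,t+1})}\ge l_0$, iterating $\mu_{t+1}(\theta)\ge l_0\,A(t)\mu_t(\theta)$ yields $\mu_{t_2}(\theta)\ge l_0^{\,t_2-t_1}A(t_2\!:\!t_1)\mu_{t_1}(\theta)$. With the forecast limit and this bound in hand, the $\gamma$-epoch argument used to derive~\eqref{eq:last_of_first_part} applies unchanged: over the a.s.\ infinitely many $\gamma$-epochs the forecasts of each observationally self-sufficient set become correct, the propagation bound forces $\mu_{i,t}(\theta)\to0$ for all $\theta\notin\Theta^*$ along a subsequence, and the submartingale limit upgrades this to $\sum_{\theta\in\Theta^*}\mu_t(\theta)\to\allone$, giving weak merging (Assertion~\eqref{item:weak_merge}); identifiability then gives $\mu_t(\theta^*)\to\allone$, i.e.\ Assertion~\eqref{item:asymptotic_learning}.
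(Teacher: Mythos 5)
Your route is genuinely different from the paper's: the paper does not re-run the analysis of Theorem~\ref{thm:main} at all, but instead embeds the diffusion--adaptation network into an augmented network of $2n$ agents governed by the \emph{original} rule~\eqref{eq:main} on a time-doubled chain $\{\tilde A(t)\}$ (with $\tilde A(2t+1)=I_{2n}$ and $\tilde A(2t)$ built from the off-diagonal part of $A(t-1)$ and the diagonal $W(t-1)$), verifies that Assumptions~\ref{item:gamma_epoch}--\ref{item:seq_indep} and the absolute probability sequence transfer to $\{\tilde A(t)\}$, and invokes Theorem~\ref{thm:main} as a black box. Your direct re-derivation is attractive where it works: the decomposition $\mu_{t+1}(\theta)-A(t)\mu_t(\theta)=A(t)\,\mathrm{diag}(\cdot)\mu_t(\theta)$, the Cauchy--Schwarz lower bound $\E^*[\mathrm{BU}_{j,t+1}(\theta)\mid\B'_t]\ge\mu_{j,t}(\theta)$ for $\theta\in\Theta^*$, the submartingale facts, the collapse of the left factor $A(t)$ via $\pi^T(t+1)\E^*[A(t)]=\pi^T(t)$, and the conclusion $\epsilon_j(t)\to 0$, hence $m_{j,t}(s)\to l_j(s\mid\theta^*)$ for \emph{every} agent, are all correct and in fact yield a cleaner one-step statement than Lemma~\ref{lem:strongest} (no $a_{ii}(t)$ factor). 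The influence bound $\mu_{t_2}(\theta)\ge l_0^{t_2-t_1}A(t_2\!:\!t_1)\mu_{t_1}(\theta)$ is also fine.

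The gap is your claim that the induction of Lemma~\ref{lem:modified_forecast_limit} ``goes through verbatim.'' That induction leans on the noise term in~\eqref{eq:vector_form} being \emph{diagonal}: picking the $i$-th coordinate, multiplying by $\prod_{r}l_i(s_r\mid\theta)$ and summing over $\theta$ produces only agent-$i$ forecasts, namely $\tfrac{1}{m_{i,t}(\omega_{i,t+1})}m^{(h+1)}_{i,t}(\omega_{i,t+1},s_1,\ldots,s_h)-m^{(h)}_{i,t}(s_1,\ldots,s_h)$. Under~\eqref{eq:diffuse_adapt} the $i$-th coordinate of the noise is $\sum_j a_{ij}(t)\bigl(\tfrac{l_j(\omega_{j,t+1}\mid\theta)}{m_{j,t}(\omega_{j,t+1})}-1\bigr)\mu_{j,t}(\theta)$, and the same manipulation produces cross terms $\sum_{\theta}\prod_r l_i(s_r\mid\theta)\,l_j(\omega_{j,t+1}\mid\theta)\,\mu_{j,t}(\theta)/m_{j,t}(\omega_{j,t+1})$ for $j\ne i$, which are neither $h$-step forecasts of agent $i$ nor of agent $j$ and are not controlled by the inductive hypothesis. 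The multi-step forecasts cannot be skipped: convergence of the one-step forecasts $m_{j,t}(\cdot)\to l_j(\cdot\mid\theta^*)$ alone does not force $\mu_{j,t}(\theta)\to 0$ for $\theta\notin\Theta_j^*$ (a mixture of two false states can reproduce $l_j(\cdot\mid\theta^*)$), and it is precisely the belief-vanishing on $\Theta\setminus\Theta_j^*$ that feeds the $\gamma$-epoch propagation argument. So the inductive step needs a new idea for this dynamics --- which is exactly the difficulty the paper's augmentation construction is designed to avoid.
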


    \begin{proof}
 Similar to the proof of Corollary~\ref{cor:inertial}, in order to use Theorem~\ref{thm:main} appropriately, we construct a hypothetical network $\widetilde{\mathcal H}$ of $2n$ agents, and for each $i\in [n]$, we let the signal spaces and the associated conditional distributions of the $i$\textsuperscript{th} and the $(n+i)$\textsuperscript{th} agents of $\widetilde{\mathcal H}$ be given by: 
 \begin{align}\label{eq:copy}
     \tilde S_i = \tilde S_{n+i} = S_i,\,\,\,\tilde l_i(\cdot|\theta)=\tilde l_{n+i}(\cdot|\theta)= l_i(\cdot|\theta)\text{ for all }\theta\in\Theta,
 \end{align} 
 respectively. Likewise, we let the prior beliefs of the agents of $\widetilde{\mathcal H}$ be given by $\tilde\mu_{i,0}=\tilde\mu_{n+i,0}=\mu_{i,0}$ for all $i\in [n]$. However, we let the observations of the hypothetical agents be given by $\tilde\omega_{i,2t}=\tilde \omega_{n+i,\,2t}:=\omega_{i,t}$ and
 $\tilde\omega_{i,2t+1}=\tilde \omega_{n+i,2t+1}:=\omega_{i,t}$ for all $t\in\N_0$. In addition, we let $W(t):=\text{diag}(a_{11}(t),\ldots, a_{nn}(t))$ and $\hat A(t):=A(t)-W(t)$ for all $t\in\N$ so that $\hat a_{ii}(t)=0$ for all $i\in [n]$. Furthermore, let the update rule for the network $\widetilde{\mathcal H}$ be described by:
\begin{align}\label{eq:tilde_main}
    \tilde \mu_{i,t+1}(\theta)&=\tilde a_{ii}(t)\widetilde{\text{BU}}_{i, t+1}(\theta)+\sum_{j\in[2n]\setminus\{i\}}\tilde a_{ij}(t)\tilde \mu_{j,t}(\theta),
\end{align}
where $\tilde a_{ij}(t)$ is the $(i,j)$\textsuperscript{th} entry of the matrix $\tilde A(t)$ defined by
$$ 
    \tilde A(2t) = 
        \begin{pmatrix}
            \frac{1}{2}\hat A(t-1) &  \frac{1}{2}\hat A(t-1) + W(t-1) \\
            \frac{1}{2}\hat A(t-1) + W(t-1) & \frac{1}{2}\hat A(t-1)
        \end{pmatrix}
$$
and $\tilde A(2t+1)=I_{2n}$ for all $t\in\N_0$, and
$$
\widetilde{\text{BU}}_{i,t+1}(\theta) := \frac{\tilde l_i(\tilde \omega_{i,t+1}|\theta)\tilde \mu_{i,t}(\{\theta\})}{\sum_{\theta'\in\Theta}\tilde l_i(\tilde \omega_{i,t+1}|\theta')\tilde\mu_{i,t}(\{\theta'\})}.
$$
One can now verify that for all $t\in\N_0$:
$$
    \text{BU}_{i,t}(\theta) = \tilde \mu_{i,2t}(\theta)=\tilde\mu_{n+i\, , 2t}(\theta) = \widetilde{\text{BU}}_{i,2t}(\theta)
$$
and
$$
    \mu_{i,t}(\theta)=\tilde\mu_{i,2t+1}(\theta). 
$$
Hence, it suffices to prove that the first two assumptions of Theorem~\ref{thm:main} apply to the hypothetical network $\widetilde{\mathcal H}$.

To this end, we begin by showing that if Assumption~\ref{item:gamma_epoch} holds for the original chain $\{A(t)\}_{t=0}^\infty$, then it also holds for the chain $\{\tilde A(t)\}_{t=0}^\infty$. First, observe that
$$
    \begin{pmatrix}
    I_n & I_n
    \end{pmatrix} \tilde A(2t+2)
    =
    \begin{pmatrix}
    A(t) & A(t)
    \end{pmatrix}.
$$
Since $A(2t+3)=I_{2n}$, this implies:
\begin{align*}
    &\begin{pmatrix}
    I_n & I_n
    \end{pmatrix}
    \tilde A(2t+5:2t+2)\\
    &= 
    \begin{pmatrix}
    A(t+1) & A(t+1)
    \end{pmatrix} \tilde A(2t+2)\\
    &=
    \begin{pmatrix}
    A(t+1)A(t) & A(t+1)A(t)
    \end{pmatrix}\\
    &=
    \begin{pmatrix}
    A(t+2:t) & A(t+2:t)
    \end{pmatrix}.
\end{align*}
By induction, this can be generalized to:
\begin{align}\label{eq:block_structure_wonder}
    &\begin{pmatrix}
    I_n & I_n
    \end{pmatrix}
    \tilde A(2(t+k)+1:2t+2)\cr
    &=
    \begin{pmatrix}
    A(t+k:t) & A(t+k:t)
    \end{pmatrix}.
\end{align}
for all $k\in\N$ and all $t\in\N_0$. On the other hand, due to block multiplication, for any $i,j\in [n]$, the $(i,j)$\textsuperscript{th} entry of the left-hand-side of~\eqref{eq:block_structure_wonder} equals $(\tilde A(2(t+k)+1:2t+2))_{ij}+(\tilde A(2(t+k)+1:2t+2))_{n+i\,j}$. Hence,~\eqref{eq:block_structure_wonder} implies:
\begin{align}\label{eq:block_structure_implication}
    \max\big\{&(\tilde A(2(t+k)+1:2t+2))_{ij},(\tilde A(2(t+k)+1:2t+2))_{n+i\,j}\big\}
    \geq \frac{1}{2}(A(t+k:t))_{ij}.
\end{align}
Together with the fact that $\tilde A(2\tau+1) = I$ for all $\tau\in\N_0$, the inequality above implies the following: given that $i\in [n]$, $t_s,t_f\in\N_0$, and $k\in\N$, if there exist $\gamma>0$, $C\subset[n]$ and $\mathcal T\subset\{t_s+1,\ldots,t_f\}$ such that for every $j\in  C$, there exists a $t\in \mathcal T$ satisfying $a_{jj}(t)\geq \gamma$ and $(A(t:t_s))_{ji}\geq\gamma$, then there exists a set $\tilde C\subset[2n]$ such that $\{i\mod{n}:i\in \tilde C\}=C$ and for every $j\in\tilde C$, there exists a $t\in\mathcal T$ satisfying 
$$
    \tilde a_{jj}(2t+1)\geq \gamma/2\text{ and }(\tilde A(2t+1:2t_s+2))_{ji}\geq \gamma/2.
$$

Next, we observe that if $\mathcal O\subset [n]$ is an observationally self-sufficient set for the original network $\mathcal H$, then~\eqref{eq:copy} implies that any set $\tilde{\mathcal O}\subset[2n]$ that satisfies $\{i\mod{n}:i\in \tilde{\mathcal O}\}=\mathcal O$ is an observationally self-sufficient set for $\mathcal{\tilde H}$. In the light of the previous paragraph, this implies that if $[t_s,t_f]$ is a $\gamma$-epoch for $\mathcal H$, then $[2t_s+2, 2t_f+1]$ is a $\frac{\gamma}{2}$-epoch for $\widetilde{\mathcal H}$. Thus, if Assumption~\ref{item:gamma_epoch} holds for $\mathcal H$, then it also holds for $\widetilde{\mathcal H}$. 

Now, since Assumption~\ref{item:positive_prior} holds for $\mathcal H$, it immediately follows that Assumption~\ref{item:positive_prior} also holds for $\widetilde{\mathcal H}$.

Next, on the basis of the block symmetry of $\tilde A(2t)$, we claim that the following analog of \eqref{eq:block_structure_implication} holds for all $i,j\in [n]$:
\begin{align*}
    \max\big\{&(\tilde A(2(t+k)+1:2t+2))_{ij},(\tilde A(2(t+k)+1:2t+2))_{i\,n+j}\big\}
    \geq \frac{1}{2}(A(t+k:t))_{ij}.
\end{align*}
This implies that for any $\tau\in\N$:
$$
    \max\big\{(\tilde A(2\tau+1:2))_{ij},
    (\tilde A(2\tau+1:2))_{i\,n+j}\big\}
    \geq \frac{1}{2}(A(\tau:0))_{ij}.
$$
 On the basis of this, it can be verified that Assumption~\ref{item:initial_connectivity} holds for $\widetilde{\mathcal H}$ whenever it holds for $\mathcal H$.

As for Assumption~\ref{item:pstar}, it can be verified that if $\{\pi(t)\}_{t=0}^\infty$ is an absolute probability process for $\{A(t)\}_{t=0}^\infty$, then the sequence $\{\tilde \pi(t)\}_{t=0}^\infty$, defined by 
$$
    \tilde \pi^T(2t)=\tilde \pi^T(2t-1)=\frac{1}{2}[\pi^T(t-1)\quad\pi^T(t-1)]
$$
for all $t\in \N$ and $\tilde\pi^T(0)=\frac{1}{2}[\pi^T(0)\quad \pi^T(0)]$ is an absolute probability process for $\{\tilde A(t)\}_{t=0}^\infty$. Since $\{A(t)\}_{t=0}^\infty\in\mathcal P^*$, it follows that $\{\tilde A(t)\}_{t=0}^\infty$ also satisfies Assumption~\ref{item:pstar}.

Finally, observe that $\{\tilde A(t)\}_{t=0}^\infty$ satisfies Assumptions~\ref{item:indep} and \ref{item:seq_indep} because the original chain $\{ A(t)\}_{t=0}^\infty$ satisfies them. In sum, Assumptions~\ref{item:positive_prior} - \ref{item:seq_indep} are all satisfied by $\widetilde{\mathcal H}$. As a result, Assertions~\eqref{item:weak_merge} and~\eqref{item:asymptotic_learning} of Theorem~\ref{thm:main} hold for $\widetilde{\mathcal H}$. Hence, the same assertions apply to the original network as well.
\end{proof}

\begin{remark}
The proof of Corollary~\ref{cor:diffuse_adapt} enables us to infer the following: it is possible for a network of agents following the original update rule~\eqref{eq:main} to learn the truth asymptotically almost surely despite certain agents not taking any new measurements at some of the time steps (which effectively means that their self-confidences are set to zero at those time steps). This could happen, for instance, when some of the agents intermittently lose contact with their external sources of information and therefore depend solely on their neighbors for updating their beliefs at the corresponding time instants. As a simple example, consider a chain $\{A(t)\}_{t=0}^\infty\in\pstar\cap\R^{n\times n}$, an increasing sequence $\{\tau_k\}_{k=0}^\infty\in\N_0$ with $\tau_0:=0$, and a chain of permutation matrices, $\{P(k)\}_{k=1}^\infty\subset\R^{n\times n}$ such that $P(k)\neq I_n$ for any $k\in\N$. Then the chain,
\begin{align*}
    &A(0), \ldots, A(\tau_1-1), P^T(1)A(\tau_1),P(1),A(\tau_1+1),\ldots\\ &\ldots,A(\tau_2-1), P^T(2)A(\tau_2), P(2), A(\tau_2+1),\ldots
\end{align*}
can be shown to belong to Class $\pstar$ even though $P_{ii}(k)=0$ for some $i\in [n]$ and infinitely many $k\in\N$. If, in addition, $\{A(t)\}_{t=0}^\infty$ satisfies Assumption~\ref{item:gamma_epoch} and $\{\tau_k\}_{k=0}^\infty$ have been chosen such that $\tau_{k-1}<t_{2k-1}<t_{2k}<\tau_{k}$ for each $k\in\N$, then it can be shown that even the modified chain satisfies Assumption~\ref{item:gamma_epoch}. \blue{In this case the assertions of Theorem~\ref{thm:main} apply to the modified chain. Moreover, the modified chain violates Condition~\ref{item:strong_feedback} of Definition~\ref{def:unif_strong_connect}, and hence, it is not a uniformly strongly connected chain. The upshot is that intermittent negligence of external information combined with the violation of standard connectivity criteria does not preclude almost-sure asymptotic learning.}
\end{remark}

\subsection{Learning on Deterministic Time-Varying Networks}

We now provide some corollaries of Theorem~\ref{thm:main} that apply to deterministic time-varying networks. We will need the following lemma in order to prove the corollaries.

\begin{lemma}\label{lem:b_connect_assumptions}
Let $\{[A(t)]\}_{t=0}^\infty$ be deterministic and uniformly strongly connected. Then Assumptions~\ref{item:gamma_epoch},~\ref{item:initial_connectivity} and~\ref{item:pstar} hold.
\end{lemma}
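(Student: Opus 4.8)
The plan is to verify Assumptions~\ref{item:gamma_epoch},~\ref{item:initial_connectivity} and~\ref{item:pstar} separately, the only substantive ingredient being a quantitative \emph{reach} estimate for the backward products of a $B$-connected chain with positive self-confidence. First I would record the elementary consequence of Definition~\ref{def:unif_strong_connect}: Conditions~1 and~\ref{item:strong_feedback} together force $a_{ii}(t)\geq\delta$ for all $i\in[n]$ and $t\in\N_0$, since each diagonal entry is positive and hence cannot be $0$. With this in hand, Assumption~\ref{item:pstar} requires no further work, as it is exactly the content of Lemma~5.8 of~\cite{touri2012product}, which places every uniformly strongly connected chain in Class $\pstar$.

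The heart of the argument is the claim that for every starting time $t_s\in\N_0$ and all $i,j\in[n]$ one has $(A(t_s+(n-1)B:t_s))_{ji}\geq\delta^{(n-1)B}$. I would prove this by tracking the influence reach set $R(t'):=\{j:(A(t':t_s))_{ji}\geq\delta^{\,t'-t_s}\}$, starting from $R(t_s)=\{i\}$. Two monotone facts drive the induction: (a) positive self-confidence propagates the geometric bound forward, since $(A(t'+1:t_s))_{ji}\geq a_{jj}(t')(A(t':t_s))_{ji}\geq\delta\cdot\delta^{\,t'-t_s}$, so $R$ is non-decreasing in $t'$; and (b) over each length-$B$ window the graph $\G(k)$ is strongly connected, so whenever $R\neq[n]$ there is an arc from $R$ to its complement, i.e. a time $q$ in the block and indices $j'\in R$, $k\notin R$ with $a_{kj'}(q)\geq\delta$, whence $(A(q+1:t_s))_{ki}\geq a_{kj'}(q)(A(q:t_s))_{j'i}\geq\delta^{\,q+1-t_s}$ and $k$ enters $R$. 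Thus $R$ gains at least one index over each $B$-block until it saturates, so $R=[n]$ after at most $n-1$ blocks, which proves the claim. I expect this to be the main obstacle: the difficulty is extracting the \emph{uniform} lower bound $\delta^{(n-1)B}$ rather than mere positivity, and keeping the geometric factor attached to the reach set throughout the induction is what makes this clean.

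Given the claim, Assumption~\ref{item:initial_connectivity} is immediate: the chain is deterministic, so the ``random time'' may be taken to be the constant $T:=(n-1)B$, and $(A(T:0))_{i1}\geq\delta^{(n-1)B}>0$ yields $\E^*[\log(A(T:0))_{i1}]=\log(A(T:0))_{i1}>-\infty$ for every $i\in[n]$. For Assumption~\ref{item:gamma_epoch} I would set $\gamma:=\delta^{(n-1)B}\leq\delta$ and show that \emph{every} window $[t_s,\,t_s+(n-1)B]$ is a $\gamma$-epoch: taking the (always observationally self-sufficient, by Definition~\ref{def:obs_comp}, since $\cap_{j\in[n]}\Theta_j^*=\Theta^*$) set $\mathcal O_i=[n]$ and $\mathcal T_i=\{t_s+(n-1)B\}$ for each $i$, the claim gives $(A(t_f:t_s))_{ji}\geq\gamma$ while the diagonal bound gives $a_{jj}(t_f)\geq\delta\geq\gamma$ at $t_f:=t_s+(n-1)B$, so both requirements of Definition~\ref{def:gamma_epoch} hold simultaneously.

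Finally I would exhibit infinitely many disjoint such windows. Setting $t_{2k-1}:=2k(n-1)B$ and $t_{2k}:=(2k+1)(n-1)B$ for $k\in\N$ produces a strictly increasing sequence $\{t_k\}\subset\N$ with $t_{2k}-t_{2k-1}=(n-1)B=:B'$, and each interval $[t_{2k-1},t_{2k}]$ is a $\gamma$-epoch by the previous paragraph. Because the chain is deterministic, each such interval is a $\gamma$-epoch with probability $1$, so $\sum_{k=1}^\infty\P^*([t_{2k-1},t_{2k}]\text{ is a }\gamma\text{-epoch})=\sum_{k=1}^\infty 1=\infty$, which establishes Assumption~\ref{item:gamma_epoch} and completes the verification.
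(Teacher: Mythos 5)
Your proposal is correct in substance but takes a genuinely different route from the paper's. The paper works inside a \emph{single} connectivity block: for each pair $(i,j)$ it extracts a directed path from $i$ to $j$ in $\G(k)$, chains the arc weights with the diagonal bound $(A(t_2:t_1))_{rr}\geq\delta^{t_2-t_1}$, concludes $(A(\tau+1:kB))_{ji}\geq\delta^{B(B+1)}$ for some $\tau$ in the block, and declares every $[kB,(k+1)B]$ a $\gamma$-epoch with $\gamma=\delta^{B(B+1)}$ and $T=B$. You instead run a reach-set induction across $n-1$ consecutive blocks to get entrywise positivity of $A(t_s+(n-1)B:t_s)$ with the uniform bound $\delta^{(n-1)B}$, and use windows of length $(n-1)B$. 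Your longer window buys rigor: a path in the union graph $\G(k)$ need not have its arcs occurring at \emph{increasing} times within the block, so the paper's chained product presupposes an ordering $\tau_1<\cdots<\tau_q$ that strong connectivity of $\G(k)$ alone does not supply (for $B=1$ and a directed cycle on $n\geq 3$ nodes one has $(A(kB+1:kB))_{ji}=a_{ji}(kB)=0$ for non-adjacent $i,j$, so influence cannot traverse a multi-arc path inside one block). Your block-by-block growth argument --- each $B$-window must add at least one vertex to the reach set, since otherwise the complement of $R$ would be unreachable from $i$ in $\G(k)$ --- is immune to this, at the harmless cost of a weaker constant and a longer epoch. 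One small repair is needed: your key claim as stated, ``for every starting time $t_s\in\N_0$,'' fails when $t_s$ is not a multiple of $B$, because $[t_s,t_s+(n-1)B]$ then contains only $n-2$ complete blocks (already with $n=2$, $B=2$ one can arrange the connecting arcs so that $A(3:1)$ is diagonal). Since every starting time you actually invoke ($t_s=0$ and $t_s=2k(n-1)B$) is a multiple of $B$, it suffices to restrict the claim to such $t_s$, or to enlarge the window to $nB$; with that adjustment the verification of Assumptions~\ref{item:gamma_epoch},~\ref{item:initial_connectivity} and~\ref{item:pstar} goes through exactly as you describe.
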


    \begin{proof}
Let $\delta$, $B$, $\{G(t)\}_{t=0}^\infty$ and $\{\G(k)\}_{k=0}^\infty$ be as defined in Definition~\ref{def:unif_strong_connect}. Consider Assumption~\ref{item:gamma_epoch}. By Definition~\ref{def:unif_strong_connect}, for any two nodes $i,j\in [n]$ and any time interval of the form $[kB, (k+1)B-1]$ where $k\in\N_0$, there exists a directed path from $i$ to $j$ in $\G(k)$, i.e., there exist an integer $q\in [B]$, nodes $s_1, s_2, \ldots, s_{q-1}\in [n]$ and times $\tau_1, \ldots,\tau_{q}\in\{kB,\ldots, (k+1)B-1\} $ such that $$
    a_{j\, s_{q-1}}(\tau_{q} ) ,\,a_{ s_{q-1} , s_{q-2}}(\tau_{q-1}), \ldots, a_{ s_1 i}(\tau_1)>0.
$$
Observe that by Definition~\ref{def:unif_strong_connect}, each of the above quantities is lower bounded by $\delta$. Also, $a_{rr}(t)>0$ and hence, $a_{rr}(t)\geq \delta$ for all $r\in [n]$ and $t\in [kB, (k+1)B-1]$. Hence, for all $r\in[n]$ and $t_1,t_2\in \{kB,\ldots, (k+1)B\}$ satisfying $t_1\leq t_2$:
\begin{align}\label{eq:self_confidence_bound}
    (A(t_2:t_1))_{rr}\geq \prod_{t=t_1}^{t_2-1}a_{rr}(t)\geq \delta^{t_2-t_1}\geq\delta^B.
\end{align}
    
It follows that:
\begin{align} \label{eq:auxiliary_long}
    (A(\tau_q+1:kB))_{ji}
    &\geq a_{j, s_{q-1}}(\tau_q)(A(\tau_q:\tau_{q-1}+1))_{s_{q-1} s_{q-1}}
    \,\,\,\,\cdot a_{s_{q-1} s_{q-2}} (\tau_{q-1})(A(\tau_{q-1}:\tau_{q-2}+1))_{s_{q-2} s_{q-2}}\cdots\cr
    &\quad\,\,\cdots a_{s_1 i}(\tau_1) (A(\tau_1:kB))_{ii}\cr
    &\geq (\delta\cdot\delta^B)^q\cr
    &\geq\delta^{q(B+1)}\nonumber\\
    &\geq \delta^{B(B+1)}.
\end{align}
Thus,  setting $\gamma=\delta^{B(B+1)}$ ensures $(A(\tau:kB))_{ji}\geq \gamma$ as well as $a_{jj}(\tau)\geq\gamma$ for some $\tau\in \{kB+1,\ldots, (k+1)B\}$. Since $i,j\in [n]$ and $k\in\N_0$ were arbitrary, and since $[n]$ is observationally self-sufficient, it follows that $[kB, (k+1)B]$ is a $\gamma$-epoch for every $k\in\N_0$. Thus, by setting $t_{2k-1}=2kB$ and $t_{2k} = (2k+1)B$, we observe that the sequence $\{t_k\}_{k=1}^\infty$ satsifies the requirements of Assumption~\ref{item:gamma_epoch}.

As for Assumption~\ref{item:initial_connectivity},~\eqref{eq:auxiliary_long} implies the existence of $\tau_1, \tau_2, \ldots, \tau_n\in [B]$ such that $(A(\tau_i:0))_{i1}>0$ for every $i\in [n]$. Then 
$(A(B:0))_{i1}\geq (A(B:\tau_i))_{ii}(A(\tau_i:0))_{i1}$ and the latter is positive since $(A(B:\tau_i))_{ii}>0$ by~\eqref{eq:self_confidence_bound}. Thus, Assumption~\ref{initial_connectivity} holds with $T=B$.

Finally, Assumption~\ref{item:pstar} holds by Lemma 5.8 of~\cite{touri2012product}. 
\end{proof}

An immediate consequence of Lemma~\ref{lem:b_connect_assumptions} and Theorem~\ref{thm:main} is the following result.
\begin{corollary}\label{cor:b_connect}
Suppose Assumption~\ref{item:positive_prior} holds and that $\{A(t)\}_{t=0}^\infty$ is a deterministic $B$-connected chain. Then all the agents' beliefs weakly merge to the truth \textit{a.s.} Also, all the agents' beliefs converge to a consensus \textit{a.s.} If, in addition, $\theta^*$ is identifiable, then the agents asymptotically learn $\theta^*$ \textit{a.s.}
\end{corollary}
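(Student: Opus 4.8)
The plan is to reduce the corollary entirely to Lemma~\ref{lem:b_connect_assumptions}, Theorem~\ref{thm:main}, and Theorem~\ref{thm:usc} by checking that a deterministic $B$-connected chain satisfies every one of Assumptions~\ref{item:gamma_epoch}--\ref{item:seq_indep}. Once all six assumptions are in place, each of the three claimed conclusions follows by directly quoting the appropriate earlier result; there is no new analysis to perform.

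First I would note that Assumption~\ref{item:positive_prior} is granted by hypothesis. Since $\{A(t)\}_{t=0}^\infty$ is deterministic and uniformly strongly connected, Lemma~\ref{lem:b_connect_assumptions} immediately supplies Assumptions~\ref{item:gamma_epoch},~\ref{item:initial_connectivity}, and~\ref{item:pstar}. It then remains to dispose of Assumptions~\ref{item:indep} and~\ref{item:seq_indep}, which hold trivially in the deterministic setting: a deterministic chain is a sequence of degenerate random matrices and is therefore $\P^*$-independent, and being deterministic it is automatically $\P^*$-independent of the observation sequence $\{\omega_t\}_{t=1}^\infty$. This establishes Assumptions~\ref{item:positive_prior}--\ref{item:seq_indep} in full.

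With all the hypotheses verified, I would invoke Theorem~\ref{thm:main}\eqref{item:weak_merge}: because Assumption~\ref{item:gamma_epoch} holds, every agent's beliefs weakly merge to the truth $\P^*$-a.s. If $\theta^*$ is additionally identifiable, Theorem~\ref{thm:main}\eqref{item:asymptotic_learning} yields asymptotic learning $\P^*$-a.s. For the consensus claim I would appeal to Theorem~\ref{thm:usc}, whose hypotheses (Assumption~\ref{item:positive_prior} together with a deterministic, uniformly strongly connected chain) coincide exactly with the present setting; this delivers random variables $C_\theta$ with $\mu_t(\theta)\to C_\theta\allone$ a.s. for every $\theta\in\Theta$.

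Since the argument is essentially bookkeeping, there is no genuine obstacle. The only points worth care are, first, recognizing that the consensus statement is \emph{not} part of Theorem~\ref{thm:main} and must instead be drawn from Theorem~\ref{thm:usc}; and second, remembering that Assumptions~\ref{item:indep} and~\ref{item:seq_indep}---though vacuous for a deterministic chain---must still be formally checked before Theorem~\ref{thm:main} can be applied.
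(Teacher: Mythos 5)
Your proposal is correct and matches the paper's intended argument exactly: the paper presents Corollary~\ref{cor:b_connect} as an immediate consequence of Lemma~\ref{lem:b_connect_assumptions} and Theorem~\ref{thm:main}, with the consensus claim supplied by Theorem~\ref{thm:usc}, whose hypotheses coincide with the corollary's. Your additional care in noting that Assumptions~\ref{item:indep} and~\ref{item:seq_indep} hold trivially for a deterministic chain is exactly the bookkeeping the paper leaves implicit.
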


Note that Corollary~\ref{cor:b_connect} is a generalization of the main result (Theorem 2) of~\cite{liu2012social} which imposes on $\{A(t)\}_{t=0}^\infty$ the additional restriction of double stochasticity.

Besides uniformly strongly connected chains, Theorem~\ref{thm:main} also applies to balanced chains with strong feedback property, since these chains too satisfy Assumption~\ref{item:pstar}.

\begin{corollary}\label{cor:balanced_chain} Suppose  Assumptions~\ref{item:positive_prior} and~\ref{item:initial_connectivity} hold, and that $\{A(t)\}_{t=0}^\infty$ is a balanced chain with strong feedback property. Then the assertions of Theorems~\ref{thm:main} and~\ref{thm:usc} apply.
\end{corollary}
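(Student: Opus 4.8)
The plan is to observe that, among the hypotheses of Theorems~\ref{thm:main} and~\ref{thm:usc}, the only one that is not immediate is Assumption~\ref{item:pstar} (membership in Class $\pstar$). Assumptions~\ref{item:positive_prior} and~\ref{item:initial_connectivity} are granted, and since this corollary lives in the deterministic setting, Assumptions~\ref{item:indep} and~\ref{item:seq_indep} hold trivially: a constant chain is $\P^*$-independent and is independent of $\{\omega_t\}_{t=1}^\infty$. Thus the whole burden reduces to showing that a balanced chain with the strong feedback property belongs to Class $\pstar$.

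To establish Assumption~\ref{item:pstar}, I would first deduce the (regular) feedback property from the strong feedback property: since $a_{ii}(t)\geq\delta$ a.s.\ and $a_{ij}(t)\geq 0$, we have $a_{ii}(t)a_{ij}(t)\geq\delta\,a_{ij}(t)$, and applying $\E^*[\,\cdot\mid\F_t]$ preserves this, which is exactly the feedback property with coefficient $\delta$ (for a deterministic chain this is immediate, with no conditioning needed). Combining the feedback property with Definition~\ref{def:balanced_chains}, I would then invoke Theorem 4.7 of~\cite{touri2012product}, which states that an independent balanced chain with feedback property lies in Class $\pstar$; a deterministic chain is a degenerate independent chain, so this applies and yields Assumption~\ref{item:pstar}.

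With Assumptions~\ref{item:positive_prior}--\ref{item:seq_indep} all in force, Theorem~\ref{thm:main} applies directly. Because the chain has the strong feedback property, Part~\eqref{item:weak_merge} of Theorem~\ref{thm:main} gives almost-sure weak merging to the truth at once; if recurring $\gamma$-epochs (Assumption~\ref{item:gamma_epoch}) are additionally present and $\theta^*$ is identifiable, Part~\eqref{item:asymptotic_learning} gives asymptotic learning. For the consensus conclusion of Theorem~\ref{thm:usc} I would follow the same two-stage template as in its proof: first establish synchronization, $\mu_{i,t}(\theta)-\mu_{j,t}(\theta)\to 0$ a.s., and then upgrade it to convergence to a common limit using the almost-sure existence of $\lim_{t\to\infty}\pi^T(t)\mu_t(\theta)$ from Lemma~\ref{lem:submtg}, which holds for every Class $\pstar$ chain. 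The weak-merging step just obtained guarantees that the nonlinear perturbation $\rho_k(\theta)$ in the backward-product representation~\eqref{eq:rho_eqn} vanishes, so the remaining issue is purely the synchronization of the underlying linear averaging dynamics.

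The main obstacle, accordingly, is this synchronization step. In the uniformly strongly connected case it rested on the geometric contraction of the comparison function $V_\pi$ (Equation (5.18) of~\cite{touri2012product}) together with the comparison bound~\eqref{eq:lyapunov_both} of Lemma~\ref{lem:comparison_function}; since a balanced chain with strong feedback need not be uniformly strongly connected, that particular contraction is unavailable. I would instead appeal to the ergodicity theory for balanced chains with feedback property in~\cite{touri2012product} to recover the analogous decay of $V_\pi$ along the dynamics, and then propagate this decay through~\eqref{eq:rho_eqn} via the Toeplitz lemma exactly as in the proof of Theorem~\ref{thm:usc}. Verifying that balance together with feedback supplies the required contraction in the absence of uniform strong connectivity is the delicate point on which the consensus half of the statement turns.
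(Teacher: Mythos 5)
Your reduction to Assumption~\ref{item:pstar} is exactly the paper's (implicit) argument: the paper offers no separate proof of this corollary beyond the remark that balanced chains with strong feedback property ``too satisfy Assumption~\ref{item:pstar},'' which is precisely your chain of deductions (strong feedback $\Rightarrow$ feedback, then balance $+$ feedback $\Rightarrow$ Class $\pstar$ by Theorem 4.7 of~\cite{touri2012product}, with Assumptions~\ref{item:indep} and~\ref{item:seq_indep} degenerate in the deterministic setting). So for everything routed through Theorem~\ref{thm:main} --- weak merging via the strong feedback property, and asymptotic learning when Assumption~\ref{item:gamma_epoch} and identifiability are additionally present --- your proposal matches the paper.

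Your unease about the consensus half is well placed, and the gap there is real rather than merely technical: the fallback you propose (recovering the decay of $V_\pi$ from ``ergodicity theory for balanced chains with feedback property'') cannot succeed from the stated hypotheses alone. In the framework of~\cite{touri2012product}, balance plus feedback yields Class $\pstar$ but \emph{not} ergodicity; ergodicity additionally requires an infinite-flow-type condition, and no such condition is implied by the corollary's hypotheses. Concretely, take $A(0)=\frac{1}{n}\allone\allone^T$ and $A(t)=I$ for all $t\geq 1$. This chain is balanced (both sides of~\eqref{eq:balance} vanish for $t\geq 1$ and coincide at $t=0$), has the strong feedback property with $\delta=1/n$, and satisfies Assumptions~\ref{item:positive_prior} and~\ref{item:initial_connectivity} with $T=1$; yet from time $1$ onward each agent performs pure Bayesian updating from a common belief, so agent $i$'s belief concentrates on $\Theta_i^*$, and if $\Theta_i^*\neq\Theta_j^*$ the limits differ and no consensus is reached. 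Thus the contraction inequality~\eqref{eq:last_d_ineq} has no analogue here, and the consensus assertion must be read as requiring further connectivity (e.g., Assumption~\ref{item:gamma_epoch} only restores~\eqref{eq:last_of_first_part}, not synchronization on $\Theta^*$). The paper itself tacitly concedes this: when Corollary~\ref{cor:balanced_chain} is invoked in the proof of Corollary~\ref{cor:liu14}, only the weak-merging conclusion is used, and the consensus claim is obtained separately via Corollary~\ref{cor:b_connect} under the extra hypothesis $\inf_t\eta(t)>0$, which restores uniform strong connectivity. So your proof is complete and faithful for the Theorem~\ref{thm:main} assertions, but the Theorem~\ref{thm:usc} assertion should either be dropped from the corollary or supplemented with a hypothesis guaranteeing ergodicity of the chain.
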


Essentially, Corollary~\ref{cor:balanced_chain} states that if every agent's self-confidence is always above a minimum threshold and if the total influence of any subset $S$ of agents on the complement set $\bar S=[n]\setminus S$ is always comparable to the total reverse influence (i.e., the total influence of $\bar S$ on $S$), then asymptotic learning takes place \textit{a.s.} under mild additional assumptions. 

It is worth noting that the following established result (Theorem 3.2,~\cite{liu2014social}) is a consequence of Corollaries~\ref{cor:b_connect} and~\ref{cor:balanced_chain}. 

\begin{corollary} [\textbf{Main result of~\cite{liu2014social}}] \label{cor:liu14}
Suppose $\{A(t)\}_{t=0}^\infty$ is a deterministic stochastic chain such that $A(t)=\eta(t)A+(1-\eta(t))I$, where $\eta(t)\in(0,1]$ is a time-varying parameter and $A = \left(A_{ij}\right)$ is a fixed stochastic matrix. Further, suppose that the network is strongly connected at all times, that there exists\footnote{This assumption is stated only implicitly in~\cite{liu2014social}. It appears on page 588 (in the proof of Lemma 3.3 of the paper).} a $\gamma>0$ such that $A_{ii}\geq \gamma$ for all $i\in [n]$ (resulting in $a_{ii}(t)>0$ for all $i\in [n]$ and $t\in\N_0$), and that $\mu_{j_0,0}(\theta^*)>0$ for some $j_0\in [n]$. Then the $1$-step-ahead forecasts of all the agents are eventually correct \textit{a.s}. Additionally, suppose $\sigma:=\inf_{t\in\N_0}\eta(t)>0$. Then all the agents converge to a consensus \textit{a.s.} If, in addition, $\theta^*$ is identifiable, then all the agents asymptotically learn the truth \textit{a.s}.
\end{corollary}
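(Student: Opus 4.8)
The plan is to split the proof according to whether we additionally assume $\sigma:=\inf_{t\in\N_0}\eta(t)>0$, and in each regime to verify the hypotheses of one of the two preceding corollaries. The first assertion (eventual correctness of the $1$-step-ahead forecasts) must survive the case where $\eta(t)$ is allowed to approach $0$, so there the chain need not be uniformly strongly connected and I would route through Corollary~\ref{cor:balanced_chain}; the consensus and asymptotic-learning claims, which come with the extra hypothesis $\sigma>0$, I would obtain from Corollary~\ref{cor:b_connect} after showing the chain is uniformly strongly connected.

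For the first assertion I would check three things so as to apply Corollary~\ref{cor:balanced_chain}: the strong feedback property, balancedness, and Assumption~\ref{item:initial_connectivity} (Assumption~\ref{item:positive_prior} being given). For strong feedback, write $a_{ii}(t)=\eta(t)A_{ii}+(1-\eta(t))=1-\eta(t)(1-A_{ii})$; since $A_{ii}\geq\gamma$ and $\eta(t)\leq 1$ we get $a_{ii}(t)\geq\gamma$, so the strong feedback property holds with coefficient $\gamma$. For Assumption~\ref{item:initial_connectivity}, note that each $A(t)$ has strictly positive diagonal and off-diagonal support equal to that of $A$ (because $\eta(t)>0$), and that support is strongly connected; a standard product-positivity argument (using the self-loops to pad short paths) then shows $A(T:0)$ is entrywise positive once $T$ reaches the diameter of that digraph, whence $(A(T:0))_{i1}>0$ and, by determinism, $\E^*[\log(A(T:0))_{i1}]=\log(A(T:0))_{i1}>-\infty$ for every $i\in[n]$.

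The crux is balancedness. For $i\neq j$ we have $a_{ij}(t)=\eta(t)A_{ij}$, so in the inequality of Definition~\ref{def:balanced_chains} the common factor $\eta(t)$ appears on both sides and cancels; hence $\{A(t)\}_{t=0}^\infty$ is balanced if and only if the fixed matrix $A$ is. Strong connectivity of the support of $A$ guarantees that across every proper nonempty cut $C$ there are arcs in both directions, so both $\sum_{i\in C}\sum_{j\notin C}A_{ij}$ and $\sum_{i\notin C}\sum_{j\in C}A_{ij}$ are strictly positive; taking $\alpha$ to be the minimum over the finitely many cuts of the ratio of these two sums produces a single $\alpha>0$ witnessing balancedness for all $C$ and all $t$. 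With these facts in hand, Corollary~\ref{cor:balanced_chain} applies (the chain is deterministic, hence trivially an independent chain independent of the observations) and yields weak merging to the truth $\P^*$-a.s., which in particular makes the $1$-step-ahead forecasts eventually correct a.s.

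For the remaining claims I would assume $\sigma>0$. Then each positive off-diagonal entry satisfies $a_{ij}(t)=\eta(t)A_{ij}\geq\sigma\min_{A_{kl}>0}A_{kl}>0$, so together with $a_{ii}(t)\geq\gamma$ the chain admits a uniform positive lower bound on its nonzero entries; since every $G(t)$ is strongly connected, conditions (1)--(3) of Definition~\ref{def:unif_strong_connect} hold with $B=1$, i.e.\ $\{A(t)\}_{t=0}^\infty$ is uniformly strongly connected. Corollary~\ref{cor:b_connect} then delivers consensus a.s.\ and, when $\theta^*$ is identifiable, asymptotic learning a.s. The only delicate step I anticipate is the balancedness argument: one must extract a single $\alpha>0$ valid uniformly over cuts and times, which is precisely where the finiteness of $[n]$ and the cancellation of $\eta(t)$ do the work.
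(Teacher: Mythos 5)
Your proposal is correct and follows essentially the same route as the paper: establish the strong feedback property from $a_{ii}(t)=1-\eta(t)(1-A_{ii})\geq\gamma$, verify balancedness by exploiting that $\eta(t)$ factors out of both cut-sums (the paper makes this explicit with $\alpha=\delta/n^2$ where $\delta$ is the smallest positive entry of $A$, while you take a minimum over the finitely many cuts --- the same finiteness argument), check Assumption~\ref{item:initial_connectivity} via positivity of products, and apply Corollary~\ref{cor:balanced_chain}; then, under $\sigma>0$, observe the chain is $B$-connected with $B=1$ and invoke Corollary~\ref{cor:b_connect}. No gaps.
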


\begin{proof}
Let $\delta:=\min_{i,j\in [n]}\{A_{ij}:A_{ij}>0\}$, let $C\subset [n]$ be an arbitrary index set and let $\bar C:=[n]\setminus C$. Observe that since the network is always strongly connected, $A(t)$ is an irreducible matrix for every $t\in\N_0$. It follows that $A$ is also irreducible. Therefore, there exist indices $p\in C$ and $q\in \bar C$ such that $A_{pq}>0$. Hence, $A_{pq}\geq\delta$. Thus, for any $t\in\N$: $$\sum_{i\in C}\sum_{j\in \bar C}a_{ij}(t)=\sum_{i\in C}\sum_{j\in \bar C}\eta(t)A_{ij}\geq\eta(t) A_{pq}\geq \eta(t)\delta.$$ On the other hand, we also have: 
$$
\sum_{i\in \bar C}\sum_{j\in C} a_{ij}(t) = \eta(t)\sum_{i\in \bar C}\sum_{j\in C} A_{ij}\leq \eta(t)\sum_{i\in\bar C}\sum_{j\in C} 1\leq n^2\eta(t).
$$
Hence, $\sum_{i\in C}\sum_{j\in \bar C}a_{ij}(t) \geq \frac{ \delta}{n^2}\sum_{i\in\bar C}\sum_{j\in C}a_{ij}(t)$ for all $t\in \N$. Moreover, 
$$a_{ii}(t) = 1 - \eta(t)(1- A_{ii})\geq 1-1(1-\gamma)=\gamma>0$$ for all $i\in [n]$ and $t\in\N$. Hence, $\{A(t)\}_{t=0}^\infty$ is a balanced chain with {feedback property}. In addition, we are given that Assumption~\ref{item:positive_prior} holds. Furthermore, feedback property and the strong connectivity assumption imply that Assumption~\ref{item:initial_connectivity} holds with $T=n-1$. Then by Corollary \ref{cor:balanced_chain}, all the agents' beliefs weakly merge to the truth. Thus, every agent's $1$-step-ahead forecasts are eventually correct \textit{a.s.} 

Next, suppose $\inf_{t\in\N_0}\eta(t)>0$, i.e., $\eta(t)\geq \sigma>0$ for all $t\in\N_0$. Then for all distinct $i,j\in [n]$, either $a_{ij}(t)\geq\sigma\delta$ or $a_{ij}(t)=0$. Along with the feedback property of $\{A(t)\}_{t=0}^\infty$ and the strong connectivity assumption, this implies that $\{A(t)\}_{t=0}^\infty$ is $B$-connected with $B=1$. We now invoke Corollary~\ref{cor:b_connect} to complete the proof.
\end{proof}

Finally, we note through the following example that uniform strong connectivity is not necessary for almost-sure asymptotic learning on time-varying networks.

\begin{example}
Let $n=6$, let $\{2,3\}$ and $\{5,6\}$ be observationally self-sufficient sets, and suppose $\mu_{1,0}(\theta^*)>0$. Let $\{A(t)\}_{t=0}^\infty$ be defined by $A(0)=\frac{1}{6}\allone\allone^T$ and
$$
    A(t)=
    \begin{cases}
    A_e\quad\text{if }t=2^{2k}\text{ for some }k\in\N_0,\\
    A_o\quad\text{if }t=2^{2k+1}\text{ for some }k\in\N_0,\\
     I\quad\text{otherwise},
    \end{cases}
$$
where
$$ A_e := 
\begin{pmatrix}
1/3 & 1/3 & 1/3 & 0 & 0 & 0\\
1/8 & 1/2 & 3/8 & 0 & 0 & 0 \\
1/4 &1/2 & 1/4 & 0 & 0 & 0\\
0 & 0 & 0 & 1/3 & 1/3 & 1/3\\
0 & 0 & 0 & 1/8 & 3/8 & 1/2\\
0 & 0 & 0 & 1/2 & 1/4 & 1/4
\end{pmatrix}
$$
and 
$$ A_o := 
\begin{pmatrix}
1/3 & 0 & 0 & 0 & 1/3 & 1/3\\
0 & 3/8 & 3/8 & 1/4 & 0 & 0 \\
0 &1/6 & 1/2 & 1/3 & 0 & 0\\
0 & 1/3 & 1/3 & 1/3 & 0 & 0\\
1/2 & 0 & 0 & 0 & 1/4 & 1/4\\
1/2 & 0 & 0 & 0 & 3/8 & 1/8
\end{pmatrix}.
$$
Then it can be verified that $\{A(t)\}_{t=0}^\infty$ is a balanced chain with \blue{strong} feedback property. Also, our choice of $A(0)$ ensures that Assumption~\ref{initial_connectivity} holds with $T=1$. Moreover, we can verify that Assumption~\ref{item:gamma_epoch} holds with $t_{2k-1}=2^k$ and $t_{2k}=2^k+1$ for all $k\in\N$. Therefore, by Corollary~\ref{cor:balanced_chain}, all the agents asymptotically learn the truth \textit{a.s.} This happens even though $\{A(t)\}_{t=0}^\infty$ is not $B$-connected for any finite $B$ (which can be verified by noting that $\lim_{k\rightarrow\infty}(2^{2k+1}-2^{2k})=\infty$). 
\end{example}

\begin{remark} 
Note that by Definition~\ref{def:balanced_chains}, balanced chains embody a certain symmetry in the influence relationships between the agents. Hence, the above example shows that asymptotic learning can be achieved even when some network connectivity is traded for influence symmetry.
\end{remark}
\section{CONCLUSIONS AND FUTURE DIRECTIONS}\label{sec:conclusion}
We \blue{extended the well-known model of non-Bayesian  social learning~\cite{jadbabaie2012non}} \blue{to study social learning over} random directed graphs satisfying connectivity criteria that are weaker than uniform strong connectivity. We showed that if the sequence of weighted adjacency matrices \blue{associated to the network} belongs to Class $\pstar$, \blue{implying that no agent's social power ever falls below a fixed threshold in the average case,} then the occurrence of infinitely many $\gamma$-epochs (periods of \blue{sufficient} connectivity) ensures almost-sure asymptotic learning. We then showed that our main result, besides generalizing a few known results, has interesting implications for related learning scenarios such as inertial learning or learning in the presence of link failures. We also showed that our main result subsumes time-varying  networks described by balanced chains, thereby suggesting that influence symmetry aids in social learning. In addition, we showed how uniform strong connectivity guarantees that all the agents' beliefs almost surely converge to a consensus even when the true state is not identifiable.  This means that, although periodicity in network connectivity is not necessary for social learning, it yields long-term social agreement, which may be desirable in certain situations.

\blue{In addition to the above results, we conjecture that our techniques can be useful to tackle the following problems.
\begin{enumerate}
    \item \textbf{\textit{Log-linear Learning:}} In the context of distributed learning in sensor networks, it is well-known that under standard connectivity criteria, log-linear learning rules (in which the agents linearly aggregate the \textit{logarithms} of their beliefs instead of the beliefs themselves) also achieve almost-sure asymptotic learning but exhibit greater convergence rates than the learning rule that we have analyzed~\cite{lalitha2014social,nedic2017fast}. We therefore believe that one can obtain a result similar to Theorem~\ref{thm:main} by applying our Class $\pstar$ techniques to analyse log-linear learning rules.
    \item \textit{\textbf{Learning on Dependent Random Digraphs:}} As there exists a definition of Class $\pstar$ for dependent random chains~\cite{touri2012product}, one may be able to extend the results of this paper to comment on learning on dependent random graphs. Regardless of the potential challenges involved in this endeavor, our intuition suggests that recurring $\gamma$-epochs (which ensure a satisfactory level of communication and belief circulation in the network) in combination with the Class $\pstar$ requirement (which ensures that every agent is influential enough to make a non-vanishing difference to others' beliefs over time) should suffice to achieve almost-sure asymptotic learning.
\end{enumerate}}

In future,  we would like to derive a set of connectivity criteria that are both necessary and sufficient for asymptotic non-Bayesian learning on random graphs. Yet another open problem is to study asymptotic and non-asymptotic rates of learning in terms of the number of $\gamma$-epochs occurred.

\section*{Appendix: Relevant Lemmas}

The lemma below provides a lower bound on the agents' future beliefs in terms of their current beliefs.

\begin{lemma}\label{lem:influence_relation}
Given $t,B\in\N$ \blue{ and $\Delta\in [B]$}, the following holds for all $i,j\in[n]$ and $\theta\in\Theta$:
\begin{align}\label{eq:influence_relation}
     \mu_{j,t+\Delta}(\theta)\geq (A(t+\Delta:t))_{ji} \left(\frac{l_0 }{n}\right)^{\blue{B} } n \mu_{i,t}(\theta).
\end{align}

\end{lemma}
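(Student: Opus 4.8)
The plan is to reduce the nonlinear belief recursion~\eqref{eq:main} to an entrywise \emph{linear} matrix inequality and then iterate. The first step is a clean one-step lower bound on the Bayesian update term. Using the uniform positivity of the likelihoods ($l_i(s|\theta)\geq l_0>0$ for all $s,i,\theta$) in the numerator, and observing that the denominator satisfies $m_{i,t}(\omega_{i,t+1})=\sum_{\theta'\in\Theta}l_i(\omega_{i,t+1}|\theta')\mu_{i,t}(\theta')\leq 1$ (because each $l_i(\cdot|\cdot)\leq 1$ and $\mu_{i,t}$ is stochastic), I would first establish
\[
\text{BU}_{i,t+1}(\theta)=\frac{l_i(\omega_{i,t+1}|\theta)\mu_{i,t}(\theta)}{m_{i,t}(\omega_{i,t+1})}\geq l_0\,\mu_{i,t}(\theta).
\]
Substituting this into~\eqref{eq:main} and using $l_0\leq 1$ to insert the same factor into the off-diagonal terms $\sum_{k\in\mathcal N_j(t)}a_{jk}(t)\mu_{k,t}(\theta)$ then yields the entrywise vector bound $\mu_{t+1}(\theta)\geq l_0\,A(t)\mu_t(\theta)$.

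The second step is a straightforward induction on $\Delta$, with the one-step bound serving as the base case $\Delta=1$ (where $A(t+1:t)=A(t)$). Assuming $\mu_{t+\Delta}(\theta)\geq l_0^{\Delta}A(t+\Delta:t)\mu_t(\theta)$, I left-multiply by the nonnegative matrix $l_0 A(t+\Delta)$, which preserves entrywise inequalities, and combine with the one-step bound at time $t+\Delta$; using $A(t+\Delta)A(t+\Delta:t)=A(t+\Delta+1:t)$ this gives $\mu_{t+\Delta+1}(\theta)\geq l_0^{\Delta+1}A(t+\Delta+1:t)\mu_t(\theta)$. Reading off the $j$-th coordinate and discarding every nonnegative summand except the one indexed by $k=i$ produces
\[
\mu_{j,t+\Delta}(\theta)\geq l_0^{\Delta}\,(A(t+\Delta:t))_{ji}\,\mu_{i,t}(\theta).
\]

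The final step is to loosen this to the stated form. Since $0<l_0\leq 1$ and $\Delta\leq B$ we have $l_0^{\Delta}\geq l_0^{B}$, and since $n\geq 1$ and $B\geq 1$ we have $l_0^{B}=(l_0/n)^{B}n\cdot n^{B-1}\geq (l_0/n)^{B}n$; chaining these gives exactly $\mu_{j,t+\Delta}(\theta)\geq (A(t+\Delta:t))_{ji}(l_0/n)^{B}n\,\mu_{i,t}(\theta)$. I do not expect a serious obstacle here: the only points needing care are the two elementary stochasticity facts $l_0\leq 1$ (so that it can be pulled out of the non-diagonal terms) and $m_{i,t}(\omega_{i,t+1})\leq 1$, both immediate from $l_i(\cdot|\theta)$ and $\mu_{i,t}$ being probability distributions. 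It is worth noting that this argument in fact proves the sharper constant $l_0^{\Delta}$; the stated $(l_0/n)^{B}n$ is a deliberately weakened, $\Delta$-independent bound that is convenient for the later applications (e.g.\ the footnote to Assumption~\ref{item:initial_connectivity}).
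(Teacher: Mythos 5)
Your proof is correct, and it takes a cleaner route than the paper's while using the same underlying idea (lower-bounding the Bayesian update by $l_0$ times the previous belief and inducting on the number of steps). The paper argues coordinate-wise: its base case is the scalar bound $\mu_{j,t+1}(\theta)\geq a_{ji}(t)\,l_0\,\mu_{i,t}(\theta)$ for a single pair $(j,i)$, and in the inductive step it must select one intermediate node $p$ with $a_{jp}(t+m)(A(t+m:t))_{pi}\geq (A(t+m+1:t))_{ji}/n$ (an averaging/pigeonhole step over the $n$ summands of the matrix product), which costs a factor $1/n$ at each iteration and is precisely why the constant $(l_0/n)^{\Delta}n$ appears in the paper's intermediate inequality. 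Your entrywise vector inequality $\mu_{t+1}(\theta)\geq l_0\,A(t)\mu_t(\theta)$ sidesteps that selection entirely: iterating it through the nonnegative matrices gives $\mu_{t+\Delta}(\theta)\geq l_0^{\Delta}A(t+\Delta:t)\mu_t(\theta)$, and since $l_0^{\Delta}\geq (l_0/n)^{\Delta}n$ for $\Delta\geq 1$, your intermediate bound strictly dominates the paper's before both are weakened to the common $\Delta$-independent form $(l_0/n)^{B}n$. The two elementary facts you flag ($l_0\leq 1$ and $m_{i,t}(\omega_{i,t+1})\leq 1$) are indeed the only points needing care, and both hold because $l_i(\cdot\,|\,\theta)$ and $\mu_{i,t}$ are probability distributions on finite sets; the paper uses the same facts implicitly in its base case. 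Nothing in the later applications depends on the paper's looser intermediate constant, so your sharper version is a drop-in replacement.
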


\begin{proof}
We first prove \blue{the following} by induction:
\blue{\begin{align}\label{eq:influence_relation_2}
     \mu_{j,t+\Delta}(\theta)\geq (A(t+\Delta:t))_{ji} \left(\frac{l_0 }{n}\right)^{\blue{\Delta } } n \mu_{i,t}(\theta).
\end{align}}
Pick any two agents $i,j\in[n]$, and note that for every $\theta\in\Theta$, the update rule \eqref{eq:main} implies that
$$\mu_{j,t+1}(\theta)\geq a_{jj}(t)\frac{l_j(\omega_{j,t+1}|\theta)}{m_{j,t}(\omega_{j,t+1})}\mu_{j,t}(\theta)\geq a_{jj}(t) l_0 \mu_{j,t}(\theta),$$
whereas the same rule implies that $\mu_{j,t+1}(\theta)\geq a_{ji}(t)\mu_{i,t}(\theta)$ if $i\neq j$. As a result, we have $\mu_{j,t+1}(\theta)\geq a_{ji}(t) l_0 \mu_{i,t}(\theta)$, which proves \eqref{eq:influence_relation_2} for $\Delta=1$. Now, suppose \eqref{eq:influence_relation_2} holds with $\Delta=m$ for some $m\in\N$. Then:
\begin{align}\label{eq:aux_chain}
\mu_{j,t+m+1}(\theta) &\stackrel{(a)}{\geq} a_{jp}(t+m) \frac{l_0}{n}\cdot n\mu_{p,t+m}(\theta)\cr 
&\stackrel{(b)}{\geq} a_{jp}(t+m) (A(t+m:t))_{pi} \left(\frac{l_0}{n}\right)^{m+1}n^2 \mu_{i,t}(\theta). 
\end{align}
for all $p\in [n]$, where (a) is obtained by applying~\eqref{eq:influence_relation_2} with $\Delta=1$ and with $t$ replaced by $t+m$, and (b) follows from the inductive hypothesis. Now, since 
$$
    (A(t+m+1:t))_{ji} = \sum_{q=1}^n a_{jq}(t+m) (A(t+m:t))_{qi},
$$
it follows that there exists a $p\in[n]$ satisfying
$$
    a_{jp}(t+m) A(t+m:t)_{pi}\geq A((t+m+1:t))_{ji}/n.
$$
Combining this inequality with~\eqref{eq:aux_chain} proves \eqref{eq:influence_relation_2} for $\Delta=m+1$ and hence for all $\Delta\in \N$. \blue{Suppose now that $\Delta\in [B]$. Then~\eqref{eq:influence_relation_2} immediately yields the following:
$$
    \mu_{j,t+\Delta}(\theta)\geq (A(t+\Delta:t))_{ji} \left(\frac{l_0 }{n}\right)^{\blue{\Delta } } n \mu_{i,t}(\theta) \geq (A(t+\Delta:t))_{ji} \left(\frac{l_0 }{n}\right)^{\blue{B } } n \mu_{i,t}(\theta),
$$
where the second inequality holds because $\frac{l_0}{n}\leq l_0 \leq 1$ by definition. This completes the proof.} 
\end{proof}

\begin{lemma} \label{lem:key}
There exists a constant $\blue{K_0}<\infty$ such that
\begin{align*}
0\leq \E^*\left[\frac{l_i(\omega_{i,t+1}|\theta)}{m_{i,t}(\omega_{i,t+1})}-1\mathrel{\Big|} \B_t\right]\leq \blue{K_0} \quad 
\end{align*}
$\P^*\text{-\textit{a.s.} for all }\theta\in\Theta_i^*, i\in[n]\text{ and }t\in\N_0$. Moreover, the second inequality above holds for all $\theta\in\Theta$.
\end{lemma}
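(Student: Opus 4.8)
The plan is to compute the conditional expectation explicitly and then bound it from both sides using elementary inequalities. The starting point is the observation that, under $\P^*$, the signal $\omega_{i,t+1}$ is distributed according to $l_i(\cdot\mid\theta^*)$ and is independent of $\B_t$; this follows from the i.i.d.\ assumption on $\{\omega_t\}_{t\in\N}$ together with Assumption~\ref{item:seq_indep}, which makes the observations independent of the network history encoded in $\B_t$. Since $\mu_{i,t}(\theta)$, and hence $m_{i,t}(s)=\sum_{\theta'\in\Theta}l_i(s\mid\theta')\mu_{i,t}(\theta')$, is $\B_t$-measurable for every fixed $s\in S_i$, I would pull the $\B_t$-measurable factor out and integrate only over $\omega_{i,t+1}$, obtaining
$$
\E^*\left[\frac{l_i(\omega_{i,t+1}\mid\theta)}{m_{i,t}(\omega_{i,t+1})}\,\Big|\,\B_t\right]=\sum_{s\in S_i}l_i(s\mid\theta^*)\frac{l_i(s\mid\theta)}{m_{i,t}(s)}.
$$

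Two structural facts about $m_{i,t}(\cdot)$ then do the rest. First, $m_{i,t}(\cdot)$ is itself a probability distribution on $S_i$: summing over $s$ and exchanging the order of summation gives $\sum_{s}m_{i,t}(s)=\sum_{\theta'}\mu_{i,t}(\theta')\sum_s l_i(s\mid\theta')=\sum_{\theta'}\mu_{i,t}(\theta')=1$. Second, the uniform positivity assumption $l_i(s\mid\theta')\geq l_0$ yields $m_{i,t}(s)\geq l_0\sum_{\theta'}\mu_{i,t}(\theta')=l_0$ for every $s$. For the upper bound I would combine $m_{i,t}(s)\geq l_0$ with $l_i(s\mid\theta)\leq 1$ to get $l_i(s\mid\theta)/m_{i,t}(s)\leq 1/l_0$, so that the displayed sum is at most $(1/l_0)\sum_s l_i(s\mid\theta^*)=1/l_0$; hence the claimed bound holds with $K_0:=1/l_0-1$, and this argument uses nothing about $\theta$ beyond $\theta\in\Theta$.

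For the lower bound, which is where I restrict to $\theta\in\Theta_i^*$, the role of observational equivalence is that $l_i(\cdot\mid\theta)=l_i(\cdot\mid\theta^*)$, so the conditional expectation collapses to $\sum_s l_i(s\mid\theta^*)^2/m_{i,t}(s)$. Writing $p:=l_i(\cdot\mid\theta^*)$ and $q:=m_{i,t}(\cdot)$, both probability distributions on $S_i$, this equals $\sum_s p(s)^2/q(s)$, which is at least $1$: by Cauchy--Schwarz, $1=\big(\sum_s p(s)\big)^2=\big(\sum_s \tfrac{p(s)}{\sqrt{q(s)}}\sqrt{q(s)}\big)^2\leq\big(\sum_s \tfrac{p(s)^2}{q(s)}\big)\big(\sum_s q(s)\big)=\sum_s\tfrac{p(s)^2}{q(s)}$. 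Subtracting $1$ gives the nonnegativity claim. (Equivalently, the quantity is $1+\chi^2(p\Vert q)\geq 1$.)

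I do not anticipate a serious obstacle; the only step requiring care is the justification of the first display, namely that conditioning on $\B_t$ reduces to averaging $s\mapsto l_i(s\mid\theta)/m_{i,t}(s)$ against the law $l_i(\cdot\mid\theta^*)$ of $\omega_{i,t+1}$. This rests on the independence of $\omega_{i,t+1}$ from $\B_t$ and the $\B_t$-measurability of $m_{i,t}$, both immediate from the model's assumptions, so the substance of the lemma is the pair of elementary inequalities above.
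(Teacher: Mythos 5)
Your proof is correct, and the upper bound is identical to the paper's (same constant $K_0 = 1/l_0 - 1$, obtained from $m_{i,t}(s)\geq l_0$ and $l_i(s|\theta)\leq 1$). For the lower bound you take a slightly different route: you compute the conditional expectation explicitly as $\sum_{s}l_i(s|\theta^*)^2/m_{i,t}(s)$ (after using observational equivalence to replace $l_i(\cdot|\theta)$ by $l_i(\cdot|\theta^*)$) and then apply Cauchy--Schwarz, i.e.\ nonnegativity of the $\chi^2$-divergence. The paper instead applies Jensen's inequality to the convex map $x\mapsto 1/x$, reducing the claim to the computation $\E^*\bigl[m_{i,t}(\omega_{i,t+1})/l_i(\omega_{i,t+1}|\theta)\mid\B_t\bigr]=1$, which again uses observational equivalence to cancel the denominator against the law $l_i(\cdot|\theta^*)$ of $\omega_{i,t+1}$. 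The two arguments are essentially dual forms of the same convexity fact (Jensen for $1/x$ with weights $l_i(\cdot|\theta^*)$ is exactly your Cauchy--Schwarz step), so nothing of substance separates them; your version has the minor advantage of exhibiting the quantity as $1+\chi^2(p\,\|\,q)$, while the paper's version mirrors the corresponding argument in the reference it follows. Both rely on the same two justifications you flag: independence of $\omega_{i,t+1}$ from $\B_t$ (i.i.d.\ signals plus Assumption~\ref{item:seq_indep}) and $\B_t$-measurability of $m_{i,t}(\cdot)$, which the paper also invokes at this point.
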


\begin{proof}
By an argument similar to the one used in~\cite{jadbabaie2012non}, since the function $\R_+\ni x\rightarrow 1/x\in\R_+$ is strictly convex, by Jensen's inequality, we have the following almost surely for every $i\in[n]$ and $\theta\in\Theta_i^*$:
\begin{align} \label{eq:jadbabaie_copy_1}
    \E^*\left[\frac{l_i(\omega_{i,t+1}|\theta)}{m_{i,t}(\omega_{i,t+1})}\mathrel{\Big|} \B_t\right]> \left(\E^*\left[\frac{m_{i,t}(\omega_{i,t+1})}{l_i(\omega_{i,t+1}|\theta)}\mathrel{\Big|} \B_t\right]\right)^{-1}.
\end{align}

Also, \eqref{eq:vector_form} implies that $\mu_t(\theta)$ is completely determined by $\omega_1,\ldots,\omega_t,A(0),\ldots,A(t-1)$ and hence, it is measurable with respect to $\B_t$. Therefore, the following holds \textit{a.s.}:
\begin{align*}
    \E^*\left[\frac{m_{i,t}(\omega_{i,t+1})}{l_i(\omega_{i,t+1}|\theta)}\mathrel{\Big|} \B_t\right]&=\E^*\left[\frac{\sum_{\theta'\in\Theta}l_i(\omega_{i,t+1}|\theta')\mu_{i,t}(\theta')}{l_i(\omega_{i,t+1}|\theta)}\mathrel{\Big|} \B_t\right]\cr
    &=\sum_{\theta'\in\Theta}\E^*\left[\frac{l_i(\omega_{i,t+1}|\theta')}{l_i(\omega_{i,t+1}|\theta)}\mathrel{\Big|} \B_t\right]\mu_{i,t}(\theta')\cr
    &\stackrel{(a)}{=}\sum_{\theta'\in\Theta}\E^*\left[\frac{l_i(\omega_{i,t+1}|\theta')}{l_i(\omega_{i,t+1}|\theta^*)}\mathrel{\Big|} \sigma(\omega_1, \ldots, \omega_t)\right]\mu_{i,t}(\theta')\cr
    &\stackrel{(b)}{=}\sum_{\theta'\in\Theta}\sum_{s\in S_i} l_i(s|\theta')\mu_{i,t}(\theta')\cr
    &=1,
\end{align*}
where we have used the implication of observational equivalence and Assumption~\ref{item:seq_indep} in (a), and the fact that $\{\omega_{i,t}\}_{t=0}^\infty$ are i.i.d. $\sim$ $l_i(\cdot|\theta^*)$ in (b). Thus, \eqref{eq:jadbabaie_copy_1} now implies the lower bound in Lemma~\ref{lem:key}.

As for the upper bound, since $l_0>0$, we also have:
\begin{align*}
    \frac{l_i(\omega_{i,t+1}|\theta)}{m_{i,t}(\omega_{i,t+1})}\leq \frac{1}{m_{i,t}(\omega_{i,t+1})}&=\frac{1}{\sum_{\theta\in\Theta}l_i(\omega_{i,t+1}|\theta)\mu_{i,t}(\theta)}\stackrel{(a)}{\leq }\frac{1}{l_0}<\infty,
\end{align*}
where (a) follows from the fact that $\sum_{\theta\in\Theta}\mu_{i,t}(\theta)=1$. This shows that $\E^*\left[\frac{l_i(\omega_{i,t+1}|\theta)}{m_{i,t}(\omega_{i,t+1})}-1\mathrel{\Big|} \B_t\right]\leq \frac{1}{l_0}-1$ \textit{a.s.} for all $\theta\in\Theta$. Setting $\blue{K_0}=\frac{1}{l_0}-1$ now completes the proof.
\end{proof}

The next lemma is one of the key steps in showing that the agents' beliefs weakly merge to the truth almost surely. 

\begin{lemma}\label{lem:strongest} For all $i\in [n]$, \blue{we have} 
\begin{align*}
    \blue{u_i(t)}:=  a_{ii}(t)\left(\frac{l_i(\blue{\omega_{i,t+1}} |\theta^*)}{m_{i,t}( \blue{\omega_{i,t+1} } )}-1\right)\blue{\mu_{i,t}(\theta^*)} \rightarrow 0\quad\text{\textit{a.s.}  as}\quad t\rightarrow\infty.
\end{align*}
\end{lemma}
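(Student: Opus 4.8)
The plan is to fix $i\in[n]$, work throughout with the filtration $\{\B'_t\}$ where $\B'_t:=\sigma(A(0),\dots,A(t-1),\omega_1,\dots,\omega_t)$, and split
$$
u_i(t)=\E^*[u_i(t)\mid\B'_t]+\big(u_i(t)-\E^*[u_i(t)\mid\B'_t]\big),
$$
showing that each summand vanishes almost surely. Writing $V_t:=\frac{l_i(\omega_{i,t+1}\mid\theta^*)}{m_{i,t}(\omega_{i,t+1})}-1$ so that $u_i(t)=a_{ii}(t)\mu_{i,t}(\theta^*)V_t$, I note that $m_{i,t}(\cdot)$ and $\mu_{i,t}(\theta^*)$ are $\B'_t$-measurable, whereas $a_{ii}(t)$ (a function of $A(t)$) and $V_t$ (a function of $\omega_{t+1}$) are, conditionally on $\B'_t$, independent by Assumptions~\ref{item:indep} and~\ref{item:seq_indep}. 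This conditional independence is what drives both halves of the argument.

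First I would establish summability of the conditional means. From the proof of Lemma~\ref{lem:submtg}, specifically~\eqref{eq:unique_to_arxiv} with $\theta=\theta^*$, left-multiplying by $\pi^T(t+1)$ and using the absolute-probability identity $\pi^T(t+1)\E^*[A(t)]=\pi^T(t)$ shows that the increment of the bounded non-negative submartingale $\{\pi^T(t)\mu_t(\theta^*)\}$ is exactly
$$
\E^*[\pi^T(t+1)\mu_{t+1}(\theta^*)\mid\B'_t]-\pi^T(t)\mu_t(\theta^*)=\sum_{j=1}^n\pi_j(t+1)\,\E^*[u_j(t)\mid\B'_t]\ge 0,
$$
where non-negativity of each summand comes from the lower bound in Lemma~\ref{lem:key}. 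Taking expectations and telescoping over $t$ gives $\E^*\big[\sum_{t\ge 0}\sum_j\pi_j(t+1)\E^*[u_j(t)\mid\B'_t]\big]\le 1$, so this series is finite a.s.; since $\pi_j(t+1)\ge p^*>0$ by Assumption~\ref{item:pstar}, I conclude $\sum_{t\ge0}\E^*[u_i(t)\mid\B'_t]<\infty$ a.s., and in particular $\E^*[u_i(t)\mid\B'_t]\to0$ a.s.

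Next I would control the martingale-difference term $X_t:=u_i(t)-\E^*[u_i(t)\mid\B'_t]$, which is $\B'_{t+1}$-measurable with $\E^*[X_t\mid\B'_t]=0$, so that $M_n:=\sum_{t<n}X_t$ is a martingale with respect to $\{\B'_n\}$. The key estimate is the conditional second-moment bound $\E^*[u_i(t)^2\mid\B'_t]\le \tfrac1{l_0}\E^*[u_i(t)\mid\B'_t]$: using the conditional independence the second moment factorizes as $\mu_{i,t}(\theta^*)^2\,\E^*[a_{ii}(t)^2\mid\B'_t]\,\E^*[V_t^2\mid\B'_t]$, and since $a_{ii}(t)^2\le a_{ii}(t)$, $\mu_{i,t}(\theta^*)\le1$, and (because $m_{i,t}(s)\ge l_0$) $\E^*[V_t^2\mid\B'_t]=\sum_{s}\tfrac{l_i(s\mid\theta^*)}{m_{i,t}(s)}\cdot\tfrac{(l_i(s\mid\theta^*)-m_{i,t}(s))^2}{m_{i,t}(s)}\le\tfrac1{l_0}\E^*[V_t\mid\B'_t]$, the claim follows after recognizing $\mu_{i,t}(\theta^*)\E^*[a_{ii}(t)\mid\B'_t]\E^*[V_t\mid\B'_t]=\E^*[u_i(t)\mid\B'_t]$. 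Hence $\E^*[X_t^2\mid\B'_t]\le\E^*[u_i(t)^2\mid\B'_t]\le\tfrac1{l_0}\E^*[u_i(t)\mid\B'_t]$, and the summability from the previous step makes the predictable quadratic variation $\langle M\rangle_\infty=\sum_t\E^*[X_t^2\mid\B'_t]$ finite a.s. By the a.s.-convergence theorem for $L^2$-martingales on the event $\{\langle M\rangle_\infty<\infty\}$, the martingale $M_n$ converges a.s., forcing $X_t\to0$ a.s. Combining this with $\E^*[u_i(t)\mid\B'_t]\to0$ yields $u_i(t)\to0$ a.s.

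The main obstacle is the bookkeeping in the summability step: one must identify the submartingale compensator of $\{\pi^T(t)\mu_t(\theta^*)\}$ as $\sum_j\pi_j(t+1)\E^*[u_j(t)\mid\B'_t]$ and then exploit the uniform lower bound $\pi_j\ge p^*$ (i.e.\ the Class $\pstar$ property) to pass from the aggregated series to the per-agent series, which is precisely where the absolute-probability-sequence machinery does the work. The variance bound is then a short $\chi^2$-divergence computation whose only delicate point is keeping $a_{ii}(t)$ random by conditioning on $\B'_t$ rather than $\B_t$, so that the inequality $a_{ii}(t)^2\le a_{ii}(t)$ can absorb the extra factor produced by squaring.
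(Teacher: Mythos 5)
Your argument is correct, but it takes a genuinely different route from the paper's at the crucial step of passing from a vanishing conditional expectation to the realized random variable. The paper conditions on $\B_t=\sigma(\omega_1,\ldots,\omega_t,A(0),\ldots,A(t))$, so $a_{ii}(t)$ and $\mu_{i,t}(\theta^*)$ stay outside the expectation and only $\omega_{t+1}$ is averaged; the limit \eqref{eq:expectation_u} then reads $a_{ii}(t)\mu_{i,t}(\theta^*)\sum_{s\in S_i}(l_i(s|\theta^*)-m_{i,t}(s))^2/m_{i,t}(s)\to 0$ a.s., each summand is non-negative and hence tends to $0$ individually, and the elementary bound $\bigl[a_{ii}(t)\bigl(\tfrac{l_i(s|\theta^*)}{m_{i,t}(s)}-1\bigr)\mu_{i,t}(\theta^*)\bigr]^2\le\tfrac{1}{l_0}\cdot(\text{summand at }s)$ gives convergence for every fixed $s$; finiteness of $S_i$ then handles the realized signal $\omega_{i,t+1}$ by taking a maximum, with no further stochastic argument. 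You instead condition on the smaller $\sigma$-algebra $\B'_t$, upgrade ``the conditional means vanish'' to ``the conditional means are summable'' via the Doob compensator of the bounded submartingale $\pi^T(t)\mu_t(\theta^*)$ (together with $\pi(t)\ge p^*\allone$ to localize the aggregated series to agent $i$), and then kill the fluctuation $u_i(t)-\E^*[u_i(t)\mid\B'_t]$ with a conditional second-moment bound and the a.s. convergence theorem for martingales with finite predictable quadratic variation. The individual steps all check out: the compensator identity follows from \eqref{eq:unique_to_arxiv} and Definition~\ref{def:abs_prob_seq}; the factorization of $\E^*[u_i(t)^2\mid\B'_t]$ is legitimate because, by Assumptions~\ref{item:indep} and~\ref{item:seq_indep}, $A(t)$ and $\omega_{t+1}$ are independent of each other and of $\B'_t$; and $\E^*[V_t^2\mid\B'_t]\le\tfrac{1}{l_0}\E^*[V_t\mid\B'_t]$ is the same $\chi^2$-type computation the paper performs, just read as a ratio bound. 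What your route buys is robustness and a quantitative by-product: it never uses finiteness of $S_i$ (so it would survive an infinite signal space) and it establishes $\sum_t\E^*[u_i(t)\mid\B'_t]<\infty$ a.s., at the cost of invoking the $L^2$-martingale convergence theorem where the paper needs only non-negativity of summands and a maximum over a finite set.
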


\begin{proof} 
\blue{Let $i\in [n]$ be a generic index.} Similar to an argument used in~\cite{jadbabaie2012non}, we observe that \blue{\eqref{eq:expectation_u}} implies the following:
\begin{align*}
    &a_{ii}(t)\E^*\left[\frac{l_i(\omega_{i,t+1}|\theta^*)}{m_{i,t}(\omega_{i,t+1})}-1\mathrel{\Big|}\B_{t}\right]\blue{\mu_{i,t}(\theta^*)} \cr
    &=a_{ii}(t)\blue{\mu_{i,t}(\theta^*)} \sum_{s\in S_i} l_i(s|\theta^*) \left(\frac{l_i(s|\theta^*)}{m_{i,t}(s)}-1\right)\cr
    &\stackrel{(a)}{=}a_{ii}(t)\blue{\mu_{i,t}(\theta^*)} \sum_{s\in S_i}\left(l_i(s|\theta^*)\frac{l_i(s_i|\theta^*)-m_{i,t}(s)}{m_{i,t}(s)}\right)+a_{ii}(t)\blue{\mu_{i,t}(\theta^*)} \sum_{s\in S_i}\left(m_{i,t}(s)-l_i(s|\theta^*)\right)\cr
    &=\sum_{s\in S_i} a_{ii}(t)\blue{\mu_{i,t}(\theta^*)} \frac{(l_i(s|\theta^*)-m_{i,t}(s))^2}{m_{i,t}(s)}\stackrel{t\rightarrow\infty}{\longrightarrow}0\quad\textit{a.s.}
\end{align*}
where (a) holds because $\sum_{s\in S_i} m_{i,t}(s)=\sum_{s\in S_i}l_i(s|\theta^*)=1$ since both $l_i(\cdot|\theta^*)$ and $m_{i,t}(\cdot)$ are probability distributions on $S_i$. Since every summand in the last summation above is non-negative, it follows that for all $i\in[n]$:
\begin{align*}
    a_{ii}(t)\blue{\mu_{i,t}(\theta^*)} \frac{(l_i(s|\theta^*)-m_{i,t}(s))^2}{m_{i,t}(s)}\rightarrow 0\quad\text{for all }s\in S_i
\end{align*}
\textit{a.s.}  as $t
\rightarrow\infty$. Therefore, for every \blue{$s\in S_i$ and} $i\in [n]$,
\begin{align*}
    &\limsup_{t\rightarrow\infty} \left[a_{ii}(t)\left(\frac{l_i(s|\theta^*)}{m_{i,t}(s)}-1\right)\blue{\mu_{i,t}(\theta^*)}\right]^2\cr
    &=\limsup_{t\rightarrow\infty}\left[ a_{ii}(t)\blue{\mu_{i,t}(\theta^*)}\frac{[l_i(s|\theta^*)-m_{i,t}(s)]^2}{m_{i,t}(s)}\cdot \frac{a_{ii}(t)\blue{\mu_{i,t}(\theta^*)} }{m_{i,t}(s)}\right]\cr
    &\leq\limsup_{t\rightarrow\infty}\left[ a_{ii}(t)\blue{\mu_{i,t}(\theta^*)}\frac{[l_i(s|\theta^*)-m_{i,t}(s)]^2}{m_{i,t}(s)}\cdot\frac{1}{l_0}\right]=0\quad\textit{a.s.},
\end{align*}
which proves \blue{that
$$
    \lim_{t\rightarrow\infty} \left | a_{ii}(t) \left( \frac{ l_i(s|\theta^*) }{ m_{i,t}(s) } -1 \right)\mu_{i,t}(\theta^*) \right | = 0 \quad\textit{a.s.}
$$
Since $S_i$ is a finite set, this implies that
$$
    \lim_{t\rightarrow\infty} \max_{s\in S_i}  \left| a_{ii}(t)\left( \frac{ l_i(s|\theta^*) }{ m_{i,t}(s) } -1 \right)\mu_{i,t}(\theta^*) \right| = 0 \quad\textit{a.s.},
$$
which proves the lemma, because $\omega_{i,t+1}\in S_i$ for all $t\in \N_0$.}
\end{proof}

We are now equipped to prove the following result which is similar to Lemma 3 of~\cite{jadbabaie2012non}.

\begin{lemma} \label{lem:like_lemma_3} For all $\theta\in\Theta:$
    \begin{align*}
    \E^*[\mu_{t+1}(\theta)|\B_t]-A(t)\mu_t(\theta)\rightarrow0\quad\text{\textit{a.s.}  as}\quad t\rightarrow\infty.
\end{align*}
\end{lemma}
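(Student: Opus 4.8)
The plan is to take conditional expectations in the vector identity~\eqref{eq:vector_form} and reduce the claim to a scalar statement for each coordinate. Since $A(t)$, $a_{ii}(t)$ and $\mu_t(\theta)$ are all $\B_t$-measurable, applying $\E^*[\,\cdot\mid\B_t]$ to~\eqref{eq:vector_form} turns the left-hand side into exactly $\E^*[\mu_{t+1}(\theta)\mid\B_t]-A(t)\mu_t(\theta)$, whose $i$-th coordinate equals $a_{ii}(t)\mu_{i,t}(\theta)f_i(t,\theta)$ with $f_i(t,\theta):=\E^*[\frac{l_i(\omega_{i,t+1}\mid\theta)}{m_{i,t}(\omega_{i,t+1})}-1\mid\B_t]$. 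Because $\omega_{i,t+1}$ is independent of $\B_t$ (by Assumption~\ref{item:seq_indep} and the i.i.d.\ hypothesis) and is distributed as $l_i(\cdot\mid\theta^*)$, while $m_{i,t}(\cdot)$ is $\B_t$-measurable, I would evaluate $f_i(t,\theta)=\sum_{s\in S_i}l_i(s\mid\theta^*)\frac{l_i(s\mid\theta)-m_{i,t}(s)}{m_{i,t}(s)}$. It then suffices to show $a_{ii}(t)\mu_{i,t}(\theta)f_i(t,\theta)\to 0$ a.s.\ for every $i\in[n]$.

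The key device is to split $l_i(s\mid\theta)-m_{i,t}(s)=[l_i(s\mid\theta)-l_i(s\mid\theta^*)]+[l_i(s\mid\theta^*)-m_{i,t}(s)]$, which decomposes $f_i(t,\theta)=\chi_i(t)+\Sigma_i(t)$, where $\chi_i(t):=\sum_s\frac{(l_i(s\mid\theta^*)-m_{i,t}(s))^2}{m_{i,t}(s)}$ (using the elementary identity $\sum_s l_i(s\mid\theta^*)\frac{l_i(s\mid\theta^*)-m_{i,t}(s)}{m_{i,t}(s)}=\sum_s\frac{(l_i(s\mid\theta^*)-m_{i,t}(s))^2}{m_{i,t}(s)}$) and $\Sigma_i(t):=\sum_s l_i(s\mid\theta^*)\frac{l_i(s\mid\theta)-l_i(s\mid\theta^*)}{m_{i,t}(s)}$. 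For the $\chi$-term I would write $a_{ii}(t)\mu_{i,t}(\theta)\chi_i(t)=\frac{\mu_{i,t}(\theta)}{\mu_{i,t}(\theta^*)}\big(a_{ii}(t)\mu_{i,t}(\theta^*)\chi_i(t)\big)$; the bracketed factor is precisely the conditional expectation $\E^*[u_i(t)\mid\B_t]$ shown to vanish in the proof of Lemma~\ref{lem:strongest}, and the prefactor is bounded by $1/\delta$ for all large $t$ because $\mu_{i,t}(\theta^*)$ is a.s.\ bounded below by a positive random variable $\delta$ (as established when analysing the one-step forecasts). Hence $a_{ii}(t)\mu_{i,t}(\theta)\chi_i(t)\to 0$ a.s.

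The term I expect to be the main obstacle is $\Sigma_i(t)$, since for a state $\theta\notin\Theta_i^*$ the differences $l_i(s\mid\theta)-l_i(s\mid\theta^*)$ do not vanish and a crude bound only gives $|a_{ii}(t)\mu_{i,t}(\theta)\Sigma_i(t)|\lesssim a_{ii}(t)\mu_{i,t}(\theta)$, which need not tend to $0$. The resolution is to exploit $\sum_s\big(l_i(s\mid\theta)-l_i(s\mid\theta^*)\big)=0$: subtracting this null sum rewrites $\Sigma_i(t)=\sum_s\big(l_i(s\mid\theta)-l_i(s\mid\theta^*)\big)\frac{l_i(s\mid\theta^*)-m_{i,t}(s)}{m_{i,t}(s)}$, so that every summand now carries the forecast error $l_i(s\mid\theta^*)-m_{i,t}(s)$. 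Multiplying by $a_{ii}(t)$ and invoking the refined one-step convergence~\eqref{eq:one-step-ahead}, namely $a_{ii}(t)(m_{i,t}(s)-l_i(s\mid\theta^*))\to 0$ a.s., together with $m_{i,t}(s)\geq l_0\delta>0$ for large $t$ and the boundedness of $l_i(s\mid\theta)-l_i(s\mid\theta^*)$, shows $a_{ii}(t)\Sigma_i(t)\to 0$ a.s.\ over the finite sum. Since $\mu_{i,t}(\theta)\leq 1$, this yields $a_{ii}(t)\mu_{i,t}(\theta)\Sigma_i(t)\to 0$. Combining the two terms gives $a_{ii}(t)\mu_{i,t}(\theta)f_i(t,\theta)\to 0$ a.s.\ for each $i$, which is exactly the asserted vector limit.
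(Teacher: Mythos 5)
Your proof is correct, but it takes a slightly different route through the same ingredients as the paper. After taking $\E^*[\,\cdot\mid\B_t]$ in~\eqref{eq:vector_form} and evaluating $f_i(t,\theta)=\sum_{s\in S_i} l_i(s\mid\theta^*)\bigl(\tfrac{l_i(s\mid\theta)}{m_{i,t}(s)}-1\bigr)$ exactly as you do, the paper dispenses with any case analysis by a single symmetry identity: since $\sum_s l_i(s\mid\theta)=\sum_s l_i(s\mid\theta^*)=1$, one has
\[
\sum_{s\in S_i} l_i(s\mid\theta^*)\Bigl(\frac{l_i(s\mid\theta)}{m_{i,t}(s)}-1\Bigr)=\sum_{s\in S_i} l_i(s\mid\theta)\Bigl(\frac{l_i(s\mid\theta^*)}{m_{i,t}(s)}-1\Bigr),
\]
so that every summand carries the factor $a_{ii}(t)\bigl(\tfrac{l_i(s\mid\theta^*)}{m_{i,t}(s)}-1\bigr)$, which vanishes a.s.\ by Lemma~\ref{lem:strongest} (together with $\liminf_t\mu_{i,t}(\theta^*)>0$), while $l_i(s\mid\theta)$ and $\mu_{i,t}(\theta)$ are bounded weights. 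You instead split $l_i(s\mid\theta)-m_{i,t}(s)$ additively into a quadratic term $\chi_i$ and a cross term $\Sigma_i$, killing the first via $\E^*[u_i(t)\mid\B_t]\to0$ plus the lower bound on $\mu_{i,t}(\theta^*)$, and the second via the zero-sum property of $l_i(\cdot\mid\theta)-l_i(\cdot\mid\theta^*)$ combined with~\eqref{eq:one-step-ahead}. Both arguments hinge on the same two facts and on the normalization of the likelihoods; the paper's swap is shorter, whereas your decomposition makes more explicit why states $\theta\notin\Theta_i^*$ cause no difficulty (their non-vanishing likelihood differences sum to zero and hence always pair with a forecast error). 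Your identification of $\Sigma_i$ as the genuine obstacle, and its resolution, is exactly the content that the paper's one-line identity hides.
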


\begin{proof}
We first note that $\sum_{s\in S_i} l_i(s|\theta)=\sum_{s\in S_i} l_i(s|\theta^*)=1$ implies that for all $\theta\in\Theta$:
\begin{align*}
    \sum_{s\in S_i} l_i(s|\theta^*)\left(\frac{l_i(s|\theta)}{m_{i,t}(s)}-1\right)=\sum_{s\in S_i} l_i(s|\theta)\left(\frac{l_i(s|\theta^*)}{m_{i,t}(s)}-1\right).
\end{align*}
Hence, for any $i\in [n]$ and $\theta\in\Theta$:
\begin{align} \label{eq:exp_theta_ruled_out}
    a_{ii}(t)\E^*\left[\frac{l_i(\omega_{i,t+1}|\theta)}{m_{i,t}(\omega_{i,t+1})}-1\mathrel{\Big|}\B_t\right]&=a_{ii}(t)\sum_{s\in S_i}l_i(s|\theta^*)\left(\frac{l_i(s|\theta)}{m_{i,t}(s)}-1\right)\cr
    &=a_{ii}(t)\sum_{s\in S_i}l_i(s|\theta)\left(\frac{l_i(s|\theta^*)}{m_{i,t}(s)}-1\right)\cr
    &=\sum_{s\in S_i} l_i(s|\theta) a_{ii}(t)\left(\frac{l_i(s|\theta^*)}{m_{i,t}(s)}-1\right)\stackrel{t\rightarrow\infty}{\longrightarrow}0\quad\textit{a.s.},
\end{align}
where the last step follows from~Lemma~\ref{lem:strongest}. Consequently, taking conditional expectations on both sides of~\eqref{eq:vector_form} yields:
\begin{gather*}
    \E^*[\mu_{t+1}(\theta)|\B_t] - A(t)\mu_t(\theta)=\text{diag}\left(\ldots,a_{ii}(t)\E^*\left[\frac{l_i(\omega_{i,t+1}|\theta)}{m_{i,t}(\omega_{i,t+1})}-1\mathrel{\Big|}\B_t\right],\ldots\right)\mu_t(\theta)
\longrightarrow 0\quad\text{\textit{a.s.}  as}\quad t\rightarrow\infty,
\end{gather*}
thus proving~Lemma~\ref{lem:like_lemma_3}.
\end{proof}
\blue{\begin{lemma}\label{lem:was_not_a_lemma_earlier}
\begin{equation*}
    \E^* [\pi^T(t+2) A(t+1) \mu_{t+1}(\theta^*) \mid \B_t ] = \pi^T(t+2)\E^*[ A(t+1)]\cdot \E^*[ \mu_{t+1}(\theta^*) \mid \B_t ]
\end{equation*}
\end{lemma}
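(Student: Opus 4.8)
The plan is to reduce the matrix--vector identity to a family of scalar identities, one per entry, and then invoke a standard conditional-expectation factorization that applies whenever one factor is independent of the sigma-algebra that carries the other factor. Writing the scalar $\pi^T(t+2)A(t+1)\mu_{t+1}(\theta^*)$ entrywise as $\sum_{i,j}\pi_i(t+2)a_{ij}(t+1)\mu_{j,t+1}(\theta^*)$ and using that $\pi(t+2)$ is deterministic (so each $\pi_i(t+2)$ pulls out of $\E^*[\cdot\mid\B_t]$), it suffices to establish, for every pair $i,j\in[n]$,
$$
\E^*[a_{ij}(t+1)\mu_{j,t+1}(\theta^*)\mid\B_t]=\E^*[a_{ij}(t+1)]\,\E^*[\mu_{j,t+1}(\theta^*)\mid\B_t].
$$
Summing these back over $i,j$ and reassembling the matrix product then yields the lemma, since all variables are bounded and hence integrable.

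The crucial observation is that $\mu_{t+1}(\theta^*)$ is \emph{not} $\B_t$-measurable: through the update rule~\eqref{eq:main} it depends on the fresh observation $\omega_{t+1}$, so one cannot naively extract it from the conditional expectation. Instead I would introduce the enlarged sigma-algebra $\mathcal H:=\sigma(\omega_1,\ldots,\omega_{t+1},A(0),\ldots,A(t))$, which contains $\B_t$, and record two facts. First, $\mu_{t+1}(\theta^*)$ is $\mathcal H$-measurable, because by~\eqref{eq:main} it is a measurable function of $A(t)$, $\omega_{t+1}$, and $\mu_t(\theta^*)$, the last of these being itself $\B_t$-measurable. Second, $A(t+1)$ is independent of $\mathcal H$: Assumption~\ref{item:indep} makes $A(t+1)$ independent of $\sigma(A(0),\ldots,A(t))$, Assumption~\ref{item:seq_indep} makes it independent of $\sigma(\omega_1,\ldots,\omega_{t+1})$, and together these imply independence from the joint generating set.

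With these two facts I would apply the following elementary lemma: if $X$ is independent of a sigma-algebra $\mathcal H$, if $Y$ is $\mathcal H$-measurable and integrable, and if $\mathcal G\subseteq\mathcal H$, then $\E^*[XY\mid\mathcal G]=\E^*[X]\,\E^*[Y\mid\mathcal G]$. This is checked by testing against an arbitrary bounded $\mathcal G$-measurable $Z$: since $YZ$ is $\mathcal H$-measurable and $X$ is independent of $\mathcal H$, one has $\E^*[X\cdot YZ]=\E^*[X]\,\E^*[YZ]=\E^*[X]\,\E^*\!\big[\E^*[Y\mid\mathcal G]\,Z\big]$ by the tower property, which identifies the conditional expectation. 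Taking $X=a_{ij}(t+1)$, $Y=\mu_{j,t+1}(\theta^*)$, and $\mathcal G=\B_t$ gives precisely the scalar identity displayed above.

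I do not anticipate a genuine obstacle here; the one point demanding care is the dependency bookkeeping. The temptation is to treat $\mu_{t+1}(\theta^*)$ as $\B_t$-measurable, which is false; the argument must instead be routed through $\mathcal H$ so that the independence of $A(t+1)$ is exercised against a sigma-algebra that actually contains every source of randomness entering $\mu_{t+1}(\theta^*)$, namely $\omega_{t+1}$. Once this enlargement is in place, the factorization is automatic and the re-summation is routine.
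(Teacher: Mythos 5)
Your proof is correct and takes a cleaner, more abstract route than the paper's. The paper proves the same factorization by freezing a realization $(\mathrm w_0,\mathrm A_0)$ of $(\omega_1,\ldots,\omega_t,A(0),\ldots,A(t))$, writing $\mu_{t+1}(\theta^*)=\psi(\mathrm w_0,\mathrm A_0,\omega_{t+1})$, and pushing the product $A(t+1)\phi(\omega_{t+1})$ through a chain of a dozen equalities that repeatedly invoke Assumptions~\ref{item:indep} and~\ref{item:seq_indep}; this is essentially a hands-on instance of the substitution lemma for conditional expectations. You instead interpose the sigma-algebra $\mathcal H=\sigma(\omega_1,\ldots,\omega_{t+1},A(0),\ldots,A(t))\supseteq\B_t$, observe that $\mu_{t+1}(\theta^*)$ is $\mathcal H$-measurable while $A(t+1)$ is independent of $\mathcal H$, and apply the standard factorization $\E^*[XY\mid\mathcal G]=\E^*[X]\,\E^*[Y\mid\mathcal G]$, verified against bounded $\mathcal G$-measurable test functions. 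This buys a shorter argument that sidesteps the measure-zero conditioning implicit in the paper's ``evaluate at $(\mathrm w_0,\mathrm A_0)$'' step, at the cost of having to state the auxiliary lemma explicitly; the entrywise reduction versus the paper's matrix-level manipulation is an immaterial difference since everything is bounded.

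One point needs tightening. You justify the independence of $A(t+1)$ from $\mathcal H$ by saying it is independent of $\sigma(A(0),\ldots,A(t))$ and independent of $\sigma(\omega_1,\ldots,\omega_{t+1})$, ``and together these imply independence from the joint generating set.'' As a general inference rule this is false: a random variable can be independent of two sigma-algebras separately without being independent of their join (e.g.\ the XOR of two fair coins). The conclusion is nevertheless true here, but for a stronger reason: Assumption~\ref{item:seq_indep} makes the \emph{entire} process $\{\omega_t\}$ independent of the \emph{entire} process $\{A(t)\}$, and Assumption~\ref{item:indep} makes the matrices mutually independent, so $\P^*(A(t+1)\in C,\,\omega_{1:t+1}\in D,\,A(0{:}t)\in E)$ factors as $\P^*(A(t+1)\in C)\,\P^*(\omega_{1:t+1}\in D)\,\P^*(A(0{:}t)\in E)$, which gives independence from the join directly. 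With that one-line repair the argument is complete.
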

\begin{proof}
{We first prove that the following holds almost surely:
\begin{align} \label{eq:was_not_an_eq_earlier}
    \E^*\left [A(t+1)\mu_{t+1}(\theta^*)\mid \B_t \right] = \E^* [A(t+1)] \E^*[\mu_{t+1}(\theta^*)\mid \B_t ].
\end{align}
}
{To this end, observe from the update rule~\eqref{eq:main} that the belief vector $\mu_{t+1}(\theta^*)$ is determined fully by $\omega_1,\ldots, \omega_t, \omega_{t+1}$ and $ A(0),\ldots, A(t)$. That is, there exists a deterministic vector function $\psi$ such that
$$
    \mu_{t+1}(\theta^*) = \psi(\omega_1,\ldots, \omega_t, A(0),\ldots, A(t), \omega_{t+1}),
$$
Consider now a realization $\mathrm w_0$ of the tuple $(\omega_1, \ldots,\omega_t)$ and a realization $\mathrm A_0$ of the tuple $(A(0),\ldots, A(t))$. Also, recall that $\omega_{t+1}\in S=\prod_{i=1}^n S_i$, and let $\phi : S\rightarrow [0,\infty) $ be the function defined by $\phi(s) := \psi(\mathrm w_0, \mathrm A_0, s)$. Then,
\begin{align*}
    &\E^*\left [A(t+1)\mu_{t+1}(\theta^*) \mid \B_t \right]\Bigm|_{  (\omega_1,\ldots,\omega_t, A(0),\ldots, A(t)) = (\mathrm w_0, \mathrm A_0)  } \cr
    &= \E^*\left [A(t+1)\mu_{t+1}(\theta^*)\mid \omega_1,\ldots, \omega_t, A(0),\ldots, A(t) \right]\Bigm|_{  (\omega_1,\ldots,\omega_t, A(0),\ldots, A(t)) = (\mathrm w_0, \mathrm A_0)  } \cr
    &=\E^*\left [A(t+1)\mu_{t+1}(\theta^*)\mid (\omega_1,\ldots, \omega_t, A(0),\ldots, A(t)) = (\mathrm w_0, \mathrm A_0) \right]\cr
    &=\E^*\left [A(t+1) \psi(\omega_1,\ldots, \omega_t, A(0),\ldots, A(t),\omega_{t+1} ) \mid (\omega_1,\ldots, \omega_t, A(0),\ldots, A(t)) = (\mathrm w_0, \mathrm A_0) \right]\cr
    &=\E^*\left[ A(t+1) \psi(\mathrm w_0, \mathrm A_0, \omega_{t+1} )  \mid (\omega_1,\ldots, \omega_t, A(0),\ldots, A(t)) = (\mathrm w_0, \mathrm A_0) \right]\cr
    &= \E^*[A(t+1)\phi (\omega_{t+1})\mid (\omega_1, \ldots, \omega_t, A(0), \ldots, A(t)) = ( \mathrm w_0, \mathrm A_0 )]\cr
    &\stackrel{(a)}{=} \E^*[ A(t+1) \phi(\omega_{t+1})]\cr
    &\stackrel{(b)}{=} \E^*[A(t+1) ] \E^*[\phi(\omega_{t+1})]\cr
    &\stackrel{(c)}{=} \E^*[A(t+1) ] \E^*[\phi(\omega_{t+1}) \mid (\omega_1,\ldots, \omega_t,A(0),\ldots, A(t)) = (\mathrm w_0, \mathrm A_0 ) ]\cr
    &=\E^*[A(t+1) ]\E^*[\psi(\mathrm w_0, \mathrm A_0, \omega_{t+1})\mid (\omega_1, \ldots, \omega_t, A(0), \ldots, A(t) )=(\mathrm w_0, \mathrm A_0 )]\cr
    & = \E^*[A(t+1) ]\E^*[\psi(\omega_1, \ldots, \omega_t, A(0), \ldots, A(t), \omega_{t+1})\mid (\omega_1, \ldots, \omega_t, A(0), \ldots, A(t) )=(\mathrm w_0, \mathrm A_0 )]\cr
    &= \E^*[A(t+1) ]\E^*[\psi(\omega_1, \ldots, \omega_t, A(0), \ldots, A(t), \omega_{t+1})\mid \omega_1,\ldots,\omega_t, A(0),\ldots, A(t) ]\Bigm|_{(\omega_1, \ldots, \omega_t, A(0), \ldots, A(t) )=(\mathrm w_0, \mathrm A_0 )}\cr
    &= \E^*[A(t+1) ]\E^*[\psi(\omega_1, \ldots, \omega_t, A(0), \ldots, A(t), \omega_{t+1})\mid \B_t ]\Bigm|_{(\omega_1, \ldots, \omega_t, A(0), \ldots, A(t) )=(\mathrm w_0, \mathrm A_0 )}\cr
    &= \E^*[A(t+1)]\E^*\left[\mu_{t+1}(\theta^*)  \mid \B_t\right ]\Bigm|_{(\omega_1, \ldots, \omega_t, A(0), \ldots, A(t) )=(\mathrm w_0, \mathrm A_0 )}
\end{align*}
where (a) follows from Assumptions~\ref{item:indep} and~\ref{item:seq_indep}, (b) follows from Assumption~\ref{item:seq_indep}, and (c) follows from Assumption~\ref{item:seq_indep} and the assumption that $\{\omega_t\}_{t=1}^\infty$ are i.i.d. Since  $(\mathrm w_0, \mathrm A_0)$ is arbitrary, the above chain of equalities holds for $\mathbb P^*$-almost every realization $(\mathrm w_0, \mathrm A_0)$ of $(\omega_1,\ldots,\omega_t, A(0),\ldots, A(t))$, and hence,~\eqref{eq:was_not_an_eq_earlier} holds almost surely.}
{\\\indent As a result, we have
\begin{align*}
    \E^*\left [\pi^T(t+2 ) A(t+1) \mu_{t}(\theta^*) \mid \B_t \right]&\stackrel{(a)}{=}\pi^T(t+2)\E^*\left [ A(t+1) \mu_{t}(\theta^*) \mid \B_t \right]\cr
    &\stackrel{(b)}{=}\pi^T(t+2)\E^*\left [ A(t+1)\right]\E^*\left[ \mu_{t}(\theta^*) \mid \B_t \right],
\end{align*}}
{where (a) holds because $\pi(t+1)$ is a non-random vector, and (b) holds because of~\eqref{eq:was_not_an_eq_earlier}. This completes the proof.}
\end{proof}}
\blue{\begin{lemma}\label{lem:replace_omega_with_s}
Let $i\in [n]$. Given that $\lim_{t\rightarrow\infty} a_{ii}(t)( m_{i,t}(\omega_{i,t+1} ) - l_i(\omega_{i,t+1} |\theta^*) ) =0$ \textit{a.s.}, we have  $$\lim_{t\rightarrow\infty} a_{ii}(t)( m_{i,t}(s ) - l_i(s|\theta^*) ) =0 \quad\textit{a.s.}\quad\text{for all }s\in S_i.
$$
\end{lemma}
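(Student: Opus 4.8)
The plan is to reduce the statement, which concerns the quantity evaluated at the \emph{random} argument $\omega_{i,t+1}$, to a statement about each \emph{fixed} symbol $s\in S_i$ by passing through a conditional expectation that weights every symbol by its (strictly positive) likelihood. Fix $i\in[n]$ and write $X_t(s):=a_{ii}(t)\big(m_{i,t}(s)-l_i(s\mid\theta^*)\big)$ for $s\in S_i$ and $t\in\N_0$. Since $a_{ii}(t)$ and $\mu_{i,t}(\cdot)$ (hence $m_{i,t}(\cdot)$) are measurable with respect to $\B_t$, each $X_t(s)$ is $\B_t$-measurable, and $|X_t(s)|\le 1$ because $a_{ii}(t)\in[0,1]$ and $m_{i,t}(s),l_i(s\mid\theta^*)\in[0,1]$. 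In this notation the hypothesis reads $X_t(\omega_{i,t+1})\to 0$ \textit{a.s.}, and the goal is $X_t(s)\to 0$ \textit{a.s.} for every fixed $s\in S_i$.

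First I would square and condition. Because $X_t(\omega_{i,t+1})\to 0$ \textit{a.s.}, also $(X_t(\omega_{i,t+1}))^2\to 0$ \textit{a.s.}, and this sequence is bounded by $1$. Since $\{\B_t\}_{t\in\N}$ is an increasing filtration, the Dominated Convergence Theorem for conditional expectations (Theorem 5.5.9 in~\cite{durrett2019probability}, as already used above) yields
$$\lim_{t\to\infty}\E^*\big[(X_t(\omega_{i,t+1}))^2 \mid \B_t\big]=0\quad\textit{a.s.}$$

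Next I would evaluate this conditional expectation. By Assumption~\ref{item:seq_indep} the signal $\omega_{t+1}$ is independent of $\B_t$, and its $i$-th marginal is $l_i(\cdot\mid\theta^*)$; since each $X_t(s)$ is $\B_t$-measurable, freezing the $\B_t$-measurable data and integrating over $\omega_{i,t+1}$ gives
$$\E^*\big[(X_t(\omega_{i,t+1}))^2 \mid \B_t\big]=\sum_{s\in S_i} l_i(s\mid\theta^*)\,(X_t(s))^2.$$
Combining the last two displays shows that the finite, non-negative sum $\sum_{s\in S_i} l_i(s\mid\theta^*)(X_t(s))^2\to 0$ \textit{a.s.}, so each summand tends to $0$ \textit{a.s.} Finally, since $l_i(s\mid\theta^*)\ge l_0>0$ for every $s\in S_i$, it follows that $(X_t(s))^2\to 0$, i.e.\ $X_t(s)\to 0$ \textit{a.s.}, for each $s\in S_i$, which is exactly the claim.

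The main obstacle is the passage from convergence at the random argument $\omega_{i,t+1}$ to convergence for every fixed $s$: the hypothesis only forces $X_t(s)\to 0$ along the (random) subsequence of times at which $s$ happens to be observed, and for $|S_i|>2$ the constraint $\sum_{s\in S_i} X_t(s)=0$ does not by itself close the gap. The squaring step is what makes this work: it renders every term of the conditional expectation non-negative, so that control of the weighted sum (delivered by the conditional Dominated Convergence Theorem over the increasing filtration, which is the Hunt-type argument licensing the interchange) propagates to each individual symbol through the uniform lower bound $l_i(s\mid\theta^*)\ge l_0$.
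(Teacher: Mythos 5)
Your proof is correct and follows essentially the same route as the paper's: condition on $\B_t$, invoke the Dominated Convergence Theorem for conditional expectations, use the independence of $\omega_{t+1}$ from $\B_t$ to rewrite the conditional expectation as a sum over $S_i$ weighted by $l_i(\cdot\mid\theta^*)$, and conclude termwise from non-negativity and the lower bound $l_i(s\mid\theta^*)\ge l_0>0$. The only cosmetic difference is that you square the quantity where the paper takes its absolute value; both serve the identical purpose of making the summands non-negative.
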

\begin{proof}
{We first note that $\lim_{t\rightarrow\infty} \left| a_{ii}(t) (  m_{i,t}(\omega_{i,t+1} ) - l_i(\omega_{i,t+1} |\theta^*)   )   \right|=0$ \textit{a.s.}  So, by the Dominated Convergence Theorem for Conditional Expectations (Theorem 5.5.9 in~\cite{durrett2019probability}), we have
\begin{equation}\label{eq:why_should _i_label_this}
    \lim_{t\rightarrow\infty} \E^*\left[\left| a_{ii}(t)( m_{i,t}(\omega_{i,t+1} ) - l_i(\omega_{i,t+1} |\theta^*) )\right|  \mid\B_t \right] = 0\quad \textit{a.s.}
\end{equation}
Now, since $\omega_{i,t+1}$ is independent of $\{\omega_1,\ldots, \omega_t, A(0), \ldots, A(t)\}$ because of Assumption~\ref{item:seq_indep} and the i.i.d. property of the observation vectors, we have
$$
\P^*(\omega_{i,t+1} = s \mid \omega_{1},\ldots, \omega_t, A(0),\ldots, A(t)) = \P^*(\omega_{i,t+1} = s) = l_i(s|\theta^*).
$$
Also, the mapping $m_{i,t}(\cdot )$ is determined fully by $\omega_1,\ldots, \omega_t$ and $A(0),\ldots, A(t)$ (i.e., $m_{i,t}(s)$ is $\B_t$-measurable for all $s\in S_i$). Therefore,~\eqref{eq:why_should _i_label_this} is equivalent to the following:
$$
    \lim_{t\rightarrow\infty}  \sum_{s\in S_i } l_i(s|\theta^*) |a_{ii}(t) ( m_{i,t}(s ) - l_i(s |\theta^*))|  = 0\quad \textit{a.s.}
$$
Now, since $l_{i}(s|\theta^*)>0$ for all $s\in S_i$, every summand in the above summation is non-negative, which implies that
$$\lim_{t\rightarrow\infty} l_{i}(s|\theta^*) | a_{ii}(t)( m_{i,t}(s ) - l_i(s|\theta^*) )| =0 \quad\textit{a.s.}\quad\text{for all }s\in S_i.
$$
Finally, since $l_i(s|\theta^*)>0$ is independent of $t$, we can delete $l_i(s|\theta^*)$ from the above limit. This completes the proof.}
\end{proof}}
\blue{\begin{lemma}\label{lem:comparison_function}
    Let the function $d:\R^n\rightarrow\R$ be defined by $d(x):=\max_{i\in [n] }x_i - \min_{j\in[n]}x_j$, and let the function $V_\pi:\R^n\times \N_0\rightarrow\R$ be defined by $V_\pi(x, k):= \sum_{i=1}^n \pi_i(k) ( x_i - \pi^T(k) x)^2$ as in~\cite{touri2012product}. Then $$(p^*/2)^{ \frac{1}{2} } d(x) \leq \sqrt{V_{\pi}(x,k)} \leq d(x)$$ for all $x\in\R^n$ and $k\in\N_0$, where $p^*>0$ is a constant such that $\pi(k)\geq p^*\allone$ for all $k\in\N_0$.
\end{lemma}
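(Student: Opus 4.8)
The plan is to analyze both inequalities through the scalar $\bar x := \pi^T(k) x$, which, since $\pi(k)$ is a stochastic vector, is a convex combination of the entries of $x$ and therefore satisfies $\min_{j\in[n]} x_j \le \bar x \le \max_{i\in[n]} x_i$. Writing $M := \max_{i\in[n]} x_i$ and $m := \min_{j\in[n]} x_j$ so that $d(x) = M - m$, I would dispose of the trivial case $d(x)=0$ first — there all entries of $x$ coincide, forcing $V_\pi(x,k)=0$, and the chain of inequalities reads $0\le 0\le 0$ — and otherwise assume $d(x)>0$.

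For the upper bound, I would note that because $m \le x_i \le M$ and $m \le \bar x \le M$ hold simultaneously, each deviation obeys $|x_i - \bar x| \le M - m = d(x)$. Squaring and averaging against the weights $\pi_i(k)$, which are nonnegative and sum to $1$, gives $V_\pi(x,k) = \sum_{i=1}^n \pi_i(k)(x_i-\bar x)^2 \le d(x)^2$; taking square roots yields $\sqrt{V_\pi(x,k)} \le d(x)$.

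For the lower bound, I would discard all but the two extremal terms and invoke the uniform floor $\pi_i(k) \ge p^*$: letting $i^\star,j^\star$ attain the maximum and minimum of $x$, one has $V_\pi(x,k) \ge \pi_{i^\star}(k)(M-\bar x)^2 + \pi_{j^\star}(k)(\bar x - m)^2 \ge p^*\big[(M-\bar x)^2 + (\bar x - m)^2\big]$. Setting $a := M-\bar x \ge 0$ and $b := \bar x - m \ge 0$, we have $a + b = d(x)$, so the elementary inequality $a^2 + b^2 \ge \tfrac12(a+b)^2$ gives $V_\pi(x,k) \ge (p^*/2)\,d(x)^2$, and a square root finishes the bound.

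The one step requiring care is extracting the constant $p^*/2$ rather than something weaker. The tempting route through the identity $V_\pi(x,k) = \tfrac12 \sum_{i,j}\pi_i(k)\pi_j(k)(x_i-x_j)^2$, retaining only the $(i^\star,j^\star)$ and $(j^\star,i^\star)$ terms, produces $(p^*)^2 d(x)^2$, whose constant is strictly smaller than $p^*/2$ (since stochasticity forces $p^* \le 1/n$, hence $(p^*)^2 \le p^*/n \le p^*/2$ for $n\ge 2$) and thus gives a weaker estimate. The fix is to keep $\bar x$ in play: anchoring the two retained terms at $\bar x$ instead of pairing the extremes directly incurs only a single factor of $p^*$, while the convexity bound $a^2+b^2 \ge \tfrac12(a+b)^2$ supplies the remaining factor of $1/2$. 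Everything else follows directly from the stochasticity of $\pi(k)$ and the definition of $d$.
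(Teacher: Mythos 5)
Your proof is correct and follows essentially the same route as the paper: the upper bound via $|x_i-\pi^T(k)x|\le d(x)$ and stochasticity of $\pi(k)$, and the lower bound by retaining the two extremal deviations from $\bar x$, applying the floor $\pi_i(k)\ge p^*$, and invoking $a^2+b^2\ge\tfrac12(a+b)^2$. The only cosmetic difference is the order in which you drop terms versus apply the bound $\pi_i(k)\ge p^*$; your aside explaining why the pairwise decomposition would yield only $(p^*)^2$ is a nice observation but not needed.
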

\begin{proof}
For any $x\in\R^n$, let us define $x_{\max}:=\max_{i\in [n]}x_i$ and  $x_{\min}:=\min_{i\in [n]}x_i$. Then for any $k\in\N_0$:
\begin{align} \label{eq:lyapunov_lower}
    V_\pi (x, k)&\geq p^*\sum_{i=1}^n (x_i - \pi^T(k) x)^2\cr
    &\geq p^*(x_{\max} - \pi^T(k) x)^2+ p^* (\pi^T(k) x-x_{\min}) ^2\cr
    &\geq\frac{p^*}{2} (x_{\max}- x_{\min})^2,
\end{align}
which follows from the fact that $a^2 + b^2 \geq \frac{(a+b)^2}{2}$.
Also, since $x_{\min}\leq x_i, \pi^T(k)x\leq x_{\max}$, we have:
\begin{align}\label{eq:lyapunov:upper}
    V_{\pi}(x,k)\leq \sum_{i=1}^n \pi_i(k) (x_{\max}-x_{\min})^2 = (x_{\max}- x_{\min})^2.
\end{align}
As a result,~\eqref{eq:lyapunov:upper} and~\eqref{eq:lyapunov_lower} together imply that
\begin{align}
    (p^*/2)^{ \frac{1}{2} } d(x) \leq \sqrt{V_{\pi}(x,k)} \leq d(x).
\end{align}
\end{proof}}
\blue{\begin{lemma}\label{lem:last_hopefully}
    Let $q_0\in \N_0$, and suppose that $\{A(t)\}_{t=0}^\infty$ is a $B$-connected chain satisfying $d(A(T_0+rB:rB)x)\leq \alpha d(x)$ for all $x\in\R_n$, $r\in\N_0$ and $T_0:=(q_0+1)B$.  Then the following holds for all $x\in\R_n$:
    \begin{align}
    d(A(t_2: t_1)x)\leq \alpha^{ \frac{t_2- t_1}{T_0} - 2} d(x).
\end{align}
\end{lemma}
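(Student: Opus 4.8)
The plan is to exploit two elementary properties of the spread function $d(\cdot)$ together with the block-contraction hypothesis. The first property is that $d$ is non-expansive under any stochastic matrix: if $A$ is stochastic then each entry of $Ax$ is a convex combination of the entries of $x$ and hence lies in $[\min_j x_j,\max_j x_j]$, so that $d(Ax)\leq d(x)$; consequently $d(A(s:r)x)\leq d(x)$ for every backward product of stochastic matrices. The second property is exactly the given hypothesis: on any block of the form $[rB,rB+T_0]$ anchored at a multiple of $B$, the product contracts $d$ by the factor $\alpha$. I do not expect to need subadditivity of $d$ here, only non-expansion and block contraction.

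First I would align the interval $[t_1,t_2]$ to the grid of multiples of $B$. Set $r_1:=\lceil t_1/B\rceil$ and $s_0:=r_1 B$, the least multiple of $B$ that is at least $t_1$, so that $t_1\leq s_0 < t_1+B\leq t_1+T_0$. Let $m:=\lfloor (t_2-s_0)/T_0\rfloor$ be the number of consecutive length-$T_0$ blocks that fit inside $[s_0,t_2]$. The key observation is that the $k$-th such block $[s_0+kT_0,\,s_0+(k+1)T_0]$ begins at $s_0+kT_0=(r_1+k(q_0+1))B$, which is again a multiple of $B$ because $T_0=(q_0+1)B$; hence the contraction hypothesis applies verbatim to each of these $m$ blocks.

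Next I would decompose the product as
$$A(t_2:t_1)=A\big(t_2:s_0+mT_0\big)\Big(\prod_{k=m-1}^{0}A\big(s_0+(k+1)T_0:s_0+kT_0\big)\Big)A(s_0:t_1),$$
apply it to $x$, and then apply $d(\cdot)$. The leading factor $A(t_2:s_0+mT_0)$ and the trailing factor $A(s_0:t_1)$ are stochastic and hence non-expansive, while each of the $m$ middle blocks multiplies the spread by at most $\alpha$; chaining these estimates (contract the trailing factor to $d(x)$, apply the $m$ blocks one at a time, and absorb the leading factor) yields $d(A(t_2:t_1)x)\leq \alpha^m d(x)$.

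Finally I would convert the integer count $m$ into the stated exponent: from $s_0<t_1+T_0$ we get $m=\lfloor (t_2-s_0)/T_0\rfloor > (t_2-s_0)/T_0-1 > (t_2-t_1)/T_0-2$, and since $\alpha\in(0,1)$ the map $x\mapsto\alpha^x$ is decreasing, so $\alpha^m\leq \alpha^{(t_2-t_1)/T_0-2}$, completing the bound. The main thing that will require care is precisely this grid-alignment bookkeeping: the hypothesis only contracts on blocks anchored at multiples of $B$, so one must start at $s_0$ rather than at $t_1$, and the two discarded boundary pieces (of total length below $2T_0$) are exactly what produce the $-2$ in the exponent. The degenerate regime where no full block fits is covered automatically, since then $(t_2-t_1)/T_0-2<0$ forces $\alpha^{(t_2-t_1)/T_0-2}\geq 1$, while non-expansion already gives $d(A(t_2:t_1)x)\leq d(x)$.
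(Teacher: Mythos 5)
Your proposal is correct and follows essentially the same route as the paper's proof: sandwich a run of contracting length-$T_0$ blocks between two non-expansive boundary factors, iterate the contraction, and let the two discarded boundary pieces account for the $-2$ in the exponent, with the degenerate short-interval case absorbed by $\alpha<1$. The only (harmless) difference is that you anchor your blocks at the first multiple of $B$ past $t_1$, using the hypothesis in its full generality, whereas the paper restricts to blocks anchored at multiples of $T_0$ (taking $r=u(q_0+1)$) and parametrizes the boundary offsets as $v,w\in[T_0]$; both bookkeepings yield the same bound.
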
}
\begin{proof}
We are given that $d(A(T_0+rB:rB)x)\leq \alpha d(x)$. In particular, when $r=u(q_0+1)$ for some $u\in\N_0$, we have $rB= uT_0$, and hence:
$$
d(A( (u+1)T_0:uT_0 ) x ) \leq \alpha d(x)
$$
for all $x\in \R^n$ and $u\in\N_0$. By induction, we can show that
$$
d(A( (u+k)T_0:uT_0 ) x ) \leq \alpha^k d(x)
$$
for all $x\in\R^n$ and $u,k\in\N_0$. Furthermore, since $\{A(t)\}_{t=0}^\infty$ is a stochastic chain, we have $d(A(k_2:k_1)x)\leq d(x)$ for all $k_1,k_2\in\N_0$ such that $k_1\leq k_2$. It follows that for any $v,w\in [T_0]$, $k\in\N_0$ and $x\in\R^n$:
\begin{align*}
    &d(A(v+ (u+k)T_0: uT_0 - w) x)\cr
    &= d( A(v + (u+k)T_0) : (u+k)T_0)\cdot A( (u+k)T_0: uT_0)\cdot A(uT_0: uT_0-w)    x)\cr
    &\leq d(A( (u+k)T_0: uT_0)\cdot A(uT_0: uT_0-w)    x)\cr
    &\leq \alpha^k d( A(uT_0: uT_0-w)x )\leq \alpha^k d(x).
\end{align*}
Now, if $v+w\geq T_0$, it is possible that $k<0$ and yet $v+(u+k)T_0\geq uT_0-w$. However, since $\alpha<1$, in case $k<0$, we have:
\begin{align*}
    d(A(v+(u+k)T_0: uT_0-w)x)\leq d(x) \leq \alpha^{k} d(x),
\end{align*}
which shows that
\begin{align}\label{eq:long_intermediate}
    d(A(v+(u+k)T_0: uT_0-w)x) \leq \alpha^{k} d(x)
\end{align}
holds whenever $v+(u+k)T_0\geq uT_0-w$. Now, for any $t_1, t_2\in \N_0$ such that $t_1\leq t_2$, on setting $u=\lceil t_1/T_0 \rceil $, $ k = \lfloor t_2/T_0 \rfloor - \lceil t_1/T_0 \rceil$, $v = t_2 - \lfloor t_2/T_0 \rfloor T_0$ and $w = \lceil t_1/T_0 \rceil T_0 - t_1$, we observe that $t_2 = v + (u+k)T_0$ and $t_1 = uT_0 - w$ with $v,w\in [T_0]$. Since $k\geq \frac{t_2-t_1}{T_0}-2$, we can express \eqref{eq:long_intermediate} compactly as:
\begin{align}
    d(A(t_2: t_1)x)\leq \alpha^{ \frac{t_2- t_1}{T_0} - 2} d(x).
\end{align}
\end{proof}

\bibliographystyle{ieeetr}
\bibliography{bib}

\end{document}
